\numberwithin{equation}{section}
\theoremstyle{plain}
\newtheorem{thm}{Theorem}[section]
\newtheorem{cor}{Corollary}[section]
\newtheorem{lemma}{Lemma}[section]
\theoremstyle{remark}
\newtheorem*{remark}{Remark}
\newcommand{\yvec}{\bm{y}}
\newcommand{\xvec}{\bm{x}}
\newcommand{\xmat}{\bm{X}}
\newcommand{\bvec}{\bm{\beta}}
\newcommand{\evec}{\bm{\eta}}
\newcommand{\nvec}{\bm{\nu}}
\newcommand{\Cmat}{\mathbf{C}}
\newcommand{\Mset}{\mathcal{M}}
\newcommand{\Sset}{\mathcal{S}}
\newcommand{\Aset}{\mathcal{A}}
\newcommand{\Eset}{\mathcal{E}}
\newcommand{\Kna}{\widehat{\bm K}_{n,a}}
\newcommand{\Kno}{\widehat{\bm K}_{n,0}}
\newcommand{\reals}{\mathbb{R}}
\newcommand{\ind}{\mathbbm{1}}
\newcommand{\defeq}{\vcentcolon =}
\newcommand{\norm}[1]{\left\lVert#1\right\rVert}
\newcommand{\shortnorm}[1]{\lVert#1\rVert}
\DeclareMathOperator*{\argmin}{arg\,min}
\DeclareMathOperator{\oracle}{oracle}
\DeclareMathOperator{\initial}{initial}
\DeclareMathOperator{\lasso}{lasso}
\DeclareMathOperator{\sgn}{sgn}
\DeclareMathOperator{\E}{E}
\DeclareMathOperator{\Cov}{Cov}
\DeclareMathOperator{\diag}{diag}
\DeclareMathOperator{\tr}{tr}
\begin{document}

\def\spacingset#1{\renewcommand{\baselinestretch}%
	{#1}\small\normalsize} \spacingset{1}

\title{Strong Oracle Guarantees for Partial Penalized Tests of High Dimensional Generalized Linear Models}
\author{Tate Jacobson \hspace{.2cm}\\
	Department of Statistics, Oregon State University}
	
\date{}

\maketitle

\begin{abstract}
	Partial penalized tests provide flexible approaches to testing linear hypotheses in high dimensional generalized linear models.
	However, because the estimators used in these tests are local minimizers of potentially non-convex folded-concave penalized objectives, the solutions one computes in practice may not coincide with the unknown local minima for which we have nice theoretical guarantees.
	To close this gap between theory and computation, we introduce local linear approximation (LLA) algorithms to compute the full and reduced model estimators for these tests and develop theory specifically for the LLA solutions.
	We prove that our LLA algorithms converge to oracle estimators for the full and reduced models in two steps with overwhelming probability.
	We then leverage this strong oracle result and the asymptotic properties of the oracle estimators to show that the partial penalized test statistics evaluated at the two-step LLA solutions are approximately chi-square in large samples, giving us guarantees for the tests using specific computed solutions and thereby closing the theoretical gap.
	We conduct simulation studies to assess the finite-sample performance of our testing procedures, finding that partial penalized tests using the LLA solutions agree with tests using the oracle estimators, and demonstrate our testing procedures in a real data application.
\end{abstract}

\noindent%
{\it Keywords:}  Folded concave penalty, generalized linear model, high dimensional testing, linear hypothesis, strong oracle property
\vfill

\newpage
\spacingset{1.45}

\section{Introduction}

In applications from genomics to economics, data containing more features than observations have become increasingly common.
The dimensionality of these data creates unique challenges for statistical inference.
Suppose we want to test a low dimensional subvector $\bvec_{\Mset}^*$ of the true parameter vector $\bvec^*$, which we have selected \textit{a priori}.
Penalization methods, such as the lasso \citep{Tibshirani1996}, enable us to estimate $\bvec^*$, but do so at the cost of introducing bias and inducing sparsity, both of which complicate inference.
Several authors address these problems by using the debiasing (or desparsifying) technique, in which a bias-correction term is added to a lasso-type estimator to produce an estimator with a tractable limiting distribution \citep{VanDeGeer2014, Zhang2014, Javanmard2014, Ning2017, Cai2017, Cai2021}.
Others use a partial penalization approach, penalizing the high dimensional nuisance parameters $\bvec_{\Mset^c}$ but leaving $\bvec_{\Mset}$ unpenalized in order to avoid introducing bias or imposing sparsity in the estimation of $\bvec_{\Mset}^*$ \citep{Wang2014, Wang2017, Shi2019, Jacobson2023}. 

Among testing procedures for high dimensional generalized linear models (GLMs), the partial penalized Wald, score, and likelihood ratio tests proposed by \cite{Shi2019} stand out for their exceptional flexibility.
They can be used to test all linear hypotheses, which take the general form $\text{H}_0: \Cmat \bvec^*_{\Mset} = \bm{t}$
and cover several important special cases, including simultaneous significance tests for multiple predictors and tests of contrasts.
In addition, \cite{Shi2019} demonstrate the asymptotic validity of their partial penalized tests in a broad high dimensional regime, in which the number of predictors $p$, the number of parameters being tested, the number of constraints under the null, and the number of nonzero parameters in the true model can all diverge with the sample size $n$, with $p$ diverging at an exponential rate.

While \citeauthor{Shi2019}'s (\citeyear{Shi2019}) partial penalized tests have been shown to possess desirable asymptotic properties, computation of the partial penalized estimators poses a theoretical problem. 
To reduce estimation bias and achieve selection consistency, \cite{Shi2019} use a folded-concave penalty in their partial penalized objective function.
As a consequence, this objective is likely to be non-convex and have multiple local minima.
In practice, \cite{Shi2019} use alternating direction method of multipliers (ADMM) algorithms to compute their partial penalized estimators.
In their theoretical study, they prove that there exist local solutions to their partial penalized estimation problems such that the partial penalized test statistics evaluated at these ``nice'' local solutions are approximately chi-square for large $n$.
There is no guarantee, however, that their ADMM algorithms will arrive at these unknown ``nice'' local solutions rather than some other local solutions.
As such, it remains an open question whether the computed local solutions possess the same asymptotic properties.

Other studies of partial penalized tests present the same disconnect between theory and computation.
\cite{Wang2014}, \cite{Wang2017}, and \cite{Jacobson2023} similarly use folded-concave penalties to obtain selection-consistent partial penalized estimators for their tests, giving them objectives which may have multiple local minima. 
Like \cite{Shi2019}, these authors establish the existence of unknown local solutions to their partial penalized estimation problems with nice theoretical properties rather than providing guarantees for specific computed solutions.

In this paper, we close the gap between theory and computation for partial penalized tests of high dimensional GLMs by establishing theoretical guarantees for specific computed solutions to the partial penalized estimation problems with folded-concave penalties.
In particular, we propose local linear approximation (LLA) algorithms to compute the folded-concave penalized reduced and full model estimators for testing $\text{H}_0: \Cmat \bvec^*_{\Mset} = \bm{t}$ and develop theory specifically for the LLA solutions.
We prove that our LLA algorithms converge to oracle estimators for the reduced and full models in two steps with probability converging to $1$ as $n \to \infty$---that is, the two-step LLA solutions possess the \textit{strong oracle property}. 
As a consequence, asymptotic results for the oracle estimators can be extended to the two-step LLA solutions.
We leverage this corollary of the strong oracle property to show that the partial penalized Wald, score, and likelihood ratio test statistics evaluated at the two-step LLA solutions are approximately chi-square for large $n$, thereby establishing the asymptotic validity of the partial penalized tests using the two-step LLA solutions.
Because the two-step LLA solutions can be directly computed in practice, this result closes the previous gap between theory and practice for partial penalized tests of GLMs.

Our theoretical study consists of two stages: (1) proving that the two-step LLA solutions possess the strong oracle property and (2) using the asymptotic properties of the oracle estimators to establish guarantees for the two-step LLA solutions. 
For the first stage, we adopt an approach similar to that of \cite{Fan2014}, identifying conditions under which the LLA algorithms converge to the oracle estimators in two steps and then showing that those conditions hold with probability converging to $1$ as $n \to \infty$.
Like \cite{Fan2014}, we are not concerned with whether the computed local solutions are the global minimizers; we only care that they possess the desired properties.

In the second stage of our theoretical study, we examine the asymptotic properties of the oracle estimators and use the strong oracle property to extend them to the two-step LLA solutions.
We examine the same broad ultra-high dimensional regime as \cite{Shi2019}. 
In this setting, the number of predictors in the oracle models diverges with $n$, meaning that classical asymptotic results for GLMs do not apply.
As such, we develop new theory for the Wald, score, and likelihood ratio tests in this regime.

While the oracle properties of folded-concave penalized estimators have been studied extensively \citep{Fan2001, Fan2004, Zhang2010, Fan2011, Fan2014},
prior research has focused primarily on the implications for estimation and variable selection.
We focus, instead, on inference, extending oracle estimation theory to establish guarantees for high dimensional hypothesis testing procedures.
The traditional oracle estimator is not a suitable target for testing $\text{H}_0: \Cmat \bvec^*_{\Mset} = \bm t$, as $\Mset$ may not be contained in the support set of $\bvec^*$.
As such, we introduce new oracle estimators, which include the elements of $\Mset$ among the set of important parameters regardless of whether they are in the support set, to serve as our targets for testing $\text{H}_0: \Cmat \bvec^*_{\Mset} = \bm t$.
Though partial penalized tests of GLMs are our main objects of study, we also demonstrate how such oracle estimators can be obtained via LLA algorithms in a broader class of partial penalized estimation problems. 
In doing so, we lay the groundwork for studying other high dimensional inference procedures using our strong oracle approach.

The rest of the paper is organized as follows:
In Section \ref{sec:partial penalized tests}, we review partial penalized tests for high dimensional GLMs and discuss the limitations of existing theory for these tests.
In Section \ref{sec:lla algorithms}, we introduce oracle estimators for the full and reduced models under $\text{H}_0:\Cmat \bvec^*_{\Mset} = \bm t$ and develop LLA algorithms for computing the partial penalized estimators for those models.  
In Section \ref{sec:lla theory}, we identify conditions under which the LLA algorithms converge to their corresponding oracle estimators in two steps and show that these conditions hold with high probability in high dimensional GLMs.
In Section \ref{sec:estimator asymptotics}, we study the asymptotic behavior of the oracle estimators and, using the strong oracle property, extend those results to the two-step LLA solutions.
In particular, we leverage these asymptotic properties to derive approximate distributions for the partial penalized test statistics evaluated at the two-step LLA solutions.
In Section \ref{sec:sims}, we conduct simulation studies comparing the partial penalized tests evaluated at the LLA solutions with tests using the oracle estimators. 
We close with some concluding remarks in Section \ref{sec:conclusion}.
The appendices contain theoretical proofs, a discussion of computational details of the LLA algorithms, additional simulations, and an application of our tests to assess the impact of nucleoside reverse transcriptase inhibitors (NRTIs) on the likelihood of repeat virological failure in people with human immunodeficiency virus (HIV).

\subsection*{Notation}
We adopt the following notation throughout our study.
Let $\mathbf A = [a_{i,j}] \in \reals^{n\times m}$ be a matrix.
We let $\mathbf{A}' = [a_{j,i}]$ denote the transpose of $\mathbf{A}$.
We use $\lambda_{\max}\{ \mathbf A \}$ and $\lambda_{\min}\{ \mathbf A \}$ to denote the largest and smallest eigenvalues, respectively, of $\mathbf A$.
If $\mathbf A$ is square, we use $\tr\{\mathbf A\}$ to denote its trace.
We use the following matrix norms: the $\ell_2$-norm $\norm{\mathbf A}_2 = \lambda_{\max}^{1/2}\{\mathbf A ' \mathbf A\}$, the $\ell_1$-norm $\norm{\mathbf A}_1 = \max_j \sum_i |a_{i,j}|$, the $\ell_{\infty}$-norm $\norm{\mathbf A}_{\infty} = \max_i \sum_j |a_{i,j}|$, the entrywise maximum $\norm{\mathbf A}_{\max} = \max_{i,j} |a_{i,j}|$, and the entrywise minimum $\norm{\mathbf A}_{\min} = \min_{i,j} |a_{i,j}|$.
For an index $j \in \{1, \ldots, m\}$, we use $\mathbf{A}_j$ to denote the $j$th column of $\mathbf A$.
Likewise, given a set $\mathcal T \subseteq \{1, \ldots, m\}$, we use $\mathbf A_{\mathcal T}$ to denote the submatrix of columns with indices in $\mathcal T$.
Given another set $\mathcal S \subseteq \{1, \ldots, n\}$, we let $\mathbf A_{\mathcal S, \mathcal T} = [a_{i,j}]_{i \in \mathcal S, j \in \mathcal T}$.
We use $|\mathcal T|$ to denote the cardinality of $\mathcal T$. 
For a vector $\bm v \in \reals^m$, we use $\bm v_{\mathcal T}$ to denote the subvector with indices in $\mathcal T$.
We use $\diag\{\bm v\}$ to denote the diagonal matrix with $\bm v$ along its diagonal.
Given a second vector $\bm w \in \reals^m$, we let $\bm v \circ \bm w$ denote the entrywise product.
For a function $f: \reals^m \to \reals$, we use $\nabla_{\mathcal T} f(\bm v)$ and $\nabla^2_{\mathcal T} f(\bm v)$ to denote the gradient and Hessian of $f$ with respect to $\bm v_{\mathcal T}$.

If $\mathbf S \in \reals^{m\times m}$ is a symmetric matrix, then by the spectral theorem it can expressed as a matrix product $\mathbf S = \mathbf Q  \mathbf D \mathbf Q'$, where $\mathbf Q$ is orthogonal and $\mathbf D = \diag\{d_1, \ldots, d_m\}$. If  $\mathbf S$ is positive semi-definite, then $d_1, \ldots, d_m \geq 0$. In this case, we define $\mathbf D^{1/2} = \diag\{d_1^{1/2}, \ldots, d_m^{1/2}\}$ and $\mathbf S^{1/2} = \mathbf Q  \mathbf D^{1/2} \mathbf Q'$ as well as $\mathbf D^{-1/2} = \diag\{d_1^{-1/2}, \ldots, d_m^{-1/2}\}$ and $\mathbf S^{-1/2} = \mathbf Q  \mathbf D^{-1/2} \mathbf Q'$.

\section{Partial Penalized Tests} \label{sec:partial penalized tests}
We begin by reviewing the partial penalized Wald, score, and likelihood ratio tests for high dimensional GLMs.
In a GLM, the conditional distribution of our response $y$ given the predictors $\xvec \in \reals^p$ belongs to an exponential family, with canonical probability density function of the form:
\begin{equation}
	p(y|\xvec, \bvec^*, \phi^*) = \exp \left(\frac{y \xvec'\bvec^* - b(\xvec'\bvec^*)}{\phi^*}\right)c(y) \text{,} \label{assumption:glm}
\end{equation}
where $\bvec^* \in \reals^p$ denotes the true coefficient vector and $\phi^*$ denotes the true value of the dispersion parameter (if one exists).

Suppose we observe a sample $\{(\xvec_i, y_i)\}_{i=1}^n$ of independent and identically distributed replicates of $(\xvec, y)$ and that we want to test the linear hypothesis
\begin{equation}
	\text{H}_0: \Cmat \bvec^*_{\Mset} = \bm{t}, \label{eqn:H0}
\end{equation}
where $\Mset \subset \{0,1,\ldots,p\}$, $\Cmat \in \reals^{r \times |\Mset|}$, and $\bm{t} \in \reals^r$.
Define $m = |\Mset|$.
We reorder our parameters so that $\Mset = \{1, \ldots, m\}$ and $\Mset^c = \{m+1, \ldots, p\}$.
We assume that $m \ll p$. 
In addition, we assume that the constraint matrix $\Cmat$ has full row rank so that none of the constraints under the null are redundant,
which implies $r \leq m$.

The partial penalized likelihood function for testing \eqref{eqn:H0} is given by
\begin{equation}
	Q_n(\bvec) = -\frac{1}{n} \sum_{i=1}^{n} \left\{ y_i \xvec_i'\bvec - b(\xvec_i'\bvec) \right\} + \sum_{j\notin \Mset} p_{\lambda}(|\beta_j|). \label{eqn:partial penalized likelihood}
\end{equation}
The parameters being tested under \eqref{eqn:H0} are left unpenalized in $Q_n(\bvec)$. 
This allows us to avoid imposing minimum signal strength assumptions on the elements of $\bvec_{\Mset}^*$ and ensures that the partial penalized tests have power at local alternatives.
\cite{Shi2019} introduced the following partial penalized estimators for testing \eqref{eqn:H0}:
\begin{align}
	\hat{\bvec}_0 &= \argmin_{\bvec} Q_n(\bvec) \text{\;\;\; subject to } \Cmat \bvec_{\Mset} = \bm{t} \label{eqn:Shi reduced}\\
	\hat{\bvec}_a & = \argmin_{\bvec} Q_n(\bvec). \label{eqn:Shi full}
\end{align}
We refer to $\hat{\bvec}_0$, which satisfies the constraints of the null hypothesis, as the \textit{reduced model} estimator and $\hat{\bvec}_a$, the unconstrained minimizer, as the \textit{full model} estimator.

We use $\hat{\bvec}_0$ and $\hat{\bvec}_a$ to compute the partial penalized test statistics.
For any estimator $\hat{\bvec}$, we define $\hat{\Sset}(\hat{\bvec}) = \{j \in \Mset^c : \hat{\beta}_{j} \neq 0 \}$. 
Using this notation, the estimated support set for the full model is $\hat{\Sset}(\hat{\bvec}_a)$ and the estimated support set for the reduced model is $\hat{\Sset}(\hat{\bvec}_0)$.
Define $\ell_n(\bvec) = -\frac{1}{n} \sum_{i=1}^{n} \left\{ y_i \xvec_i'\bvec - b(\xvec_i'\bvec) \right\}$ and 
$\widehat{\bm K}_n(\hat \bvec) = \nabla_{\Mset \cup \hat{\Sset}(\hat{\bvec})}^2 \ell_n(\hat{\bvec})$ for an estimator $\hat{\bvec}$.
Let $\hat{\phi}$ be a consistent estimator of $\phi^*$. 
The partial penalized Wald, score, and likelihood ratio tests for testing \eqref{eqn:H0} are given by
\begin{equation} 
	T_W(\hat{\bvec}_a) = (\Cmat \hat{\bvec}_{a, \Mset} - \bm{t})'
	\left( \Cmat  \left( n \widehat{\bm K}_n(\hat{\bvec}_a) \right)^{-1}_{\Mset, \Mset}  \Cmat' \right)^{-1}
	(\Cmat \hat{\bvec}_{a, \Mset} -\bm{t})/\hat{\phi}, \label{eqn:Wald stat}
\end{equation}
\begin{equation}
	T_S(\hat{\bvec}_0) = n \nabla_{\Mset \cup \hat{\Sset}(\hat{\bvec}_0)} \ell_n(\hat{\bvec}_0)'
	\left(n \widehat{\bm K}_n(\hat \bvec_0) \right)^{-1}
	n \nabla_{\Mset \cup \hat{\Sset}(\hat{\bvec}_0)} \ell_n(\hat{\bvec}_0)/\hat{\phi}, \label{eqn:score stat}
\end{equation}
and
\begin{equation}
	T_L(\hat{\bvec}_a, \hat{\bvec}_0) = -2n(\ell_n(\hat{\bvec}_a) - \ell_n(\hat{\bvec}_0) )/\hat{\phi}, \label{eqn:lrt stat}
\end{equation}
respectively.
We express the partial penalized test statistics as functions of the estimators $\hat{\bvec}_0$ and $\hat{\bvec}_a$ to underscore that the properties of the test statistics depend on the estimators used to compute them.

Suppose we want to test \eqref{eqn:H0} at significance level $\alpha \in (0,1)$. 
For each of the partial penalized test statistics, $T \in \{T_W(\hat{\bvec}_a), T_S(\hat{\bvec}_0), T_L(\hat{\bvec}_a, \hat{\bvec}_0)\}$, we reject $\text{H}_0$ when $T > \chi^2_\alpha(r)$, where $\chi^2_\alpha(r)$ denotes the upper-$\alpha$ quantile of a central chi-square distribution with $r$ degrees of freedom.

\cite{Shi2019} assume that the penalty function $p_{\lambda}(t)$ in \eqref{eqn:partial penalized likelihood} is folded-concave to reduce estimation bias and obtain partial penalized estimators which are selection consistent.
As a consequence, $Q_n(\bvec)$ is likely to be non-convex and have multiple local minima.
\cite{Shi2019} prove that there exist (unknown) local solutions, $\hat{\bvec}^*_0$ and $\hat{\bvec}^*_a$, to \eqref{eqn:Shi reduced} and \eqref{eqn:Shi full}, respectively, which are selection consistent, converge to $\bvec^*$ quickly, and have nice limiting expressions.
They leverage these properties to prove that $T_W(\hat{\bvec}_a^*)$, $T_S(\hat{\bvec}_0^*)$, and $T_L(\hat{\bvec}_a^*, \hat{\bvec}_0^*)$ approximately follow a central chi-square distribution with $r$ degrees of freedom under the null hypothesis and a non-central chi-square distribution with $r$ degrees of freedom under local alternatives when $n$ is large.
There is no guarantee, however, that $\hat{\bvec}^*_0$ and $\hat{\bvec}^*_a$ are the local solutions we will obtain in practice.
As such, it remains an open question whether the computed solutions to \eqref{eqn:Shi reduced} and \eqref{eqn:Shi full} possess the same desirable properties.

\section{Partial Penalized LLA Algorithms} \label{sec:lla algorithms}
For the next few sections, we broaden the scope of our discussion beyond GLMs to a broader class of partial penalized estimation problems.
Let $\ell_n(\bvec)$ be a convex loss function.
As before, we want to test \eqref{eqn:H0} where $\bvec^*$ denotes the true parameter vector.
We define the \textit{partial penalized loss} by
\begin{equation}
	Q_n(\bvec) = \ell_n(\bvec) + \sum_{j\notin \Mset} p_{\lambda}(|\beta_j|) \label{eqn:partial penalized loss}
\end{equation}
and the reduced and full model partial penalized estimators for testing \eqref{eqn:H0} by
\begin{align}
	\hat{\bvec}_0 &= \argmin_{\bvec} Q_n(\bvec) \text{\;\;\; subject to } \Cmat \bvec_{\Mset} = \bm{t} \label{eqn:reduced estimator} \\
	\hat{\bvec}_a & = \argmin_{\bvec} Q_n(\bvec), \label{eqn:full estimator}
\end{align}
respectively.
This general formulation covers partial penalized GLMs as well as other important statistical models.
If the model parameter is matrix (as in, for example, a Gaussian graphical model), we let $\bvec^*$ denote its vectorization.

Let $\Aset = \{j : \beta_j^* \neq 0\}$ denote the support set for the true model. 
We assume that the true model is sparse, meaning that $|\Aset| \ll p$.
The oracle estimator knows $\Aset$ in advance and leverages that information in estimating $\bvec^*$. 
For our generic convex loss function $\ell_n(\bvec)$, the oracle estimator is given by
$ \hat{\bvec}^{\oracle} = \argmin_{\bvec : \bvec_{\Aset^c} = \bm{0} } \ell_n(\bvec).$
While $\hat \bvec^{\oracle}$ cannot be directly computed in practice, it is a useful theoretical benchmark for high dimensional estimation.

In fully penalized estimation problems, it has been shown that folded-concave penalized estimators have the same limiting distribution as $\hat \bvec^{\oracle}$---a result known as the \textit{oracle property} \citep{Fan2001, Fan2004, Fan2011}.
As we have noted, folded-concave penalized objectives are likely to be non-convex and have multiple local minima.
\cite{Zou2008} proposed an LLA algorithm to turn (potentially non-convex) folded-concave penalized estimation problems into sequences of (convex) weighted lasso problems.
\cite{Fan2014} solved the multiple minima issue for a broad class of fully penalized estimation problems by establishing guarantees specifically for the LLA solutions, proving that the LLA algorithm converges to the oracle estimator in two steps with overwhelming probability---a result they termed the \textit{strong oracle property}.

While $\hat \bvec^{\oracle}$ is a sensible target when one's primary goals are to estimate $\bvec^*$ and recover $\Aset$, it is not suitable for making inference about $\bvec_{\Mset}^*$, as $\Mset$ may not be a subset of $\Aset$.
To suit our goal of testing \eqref{eqn:H0}, we need new oracle estimators to serve as our targets.
To that end, we propose the \textit{reduced oracle estimator} $\hat{\bvec}_0^{\oracle}$ and the \textit{full oracle estimator} $\hat{\bvec}_a^{\oracle}$:
\begin{align}
	\hat{\bvec}_0^{\oracle} & = \argmin_{\bvec : \bvec_{(\Mset \cup \Aset)^c} = \bm{0} } \ell_n(\bvec) \text{\;\;\; subject to } \Cmat \bvec_{\Mset} = \bm{t} \label{eqn:reduced oracle}\\
	\hat{\bvec}_a^{\oracle} & = \argmin_{\bvec : \bvec_{(\Mset \cup \Aset)^c} = \bm{0} } \ell_n(\bvec) . \label{eqn:full oracle}
\end{align}
Intuitively, $\hat{\bvec}_0^{\oracle}$ and $\hat{\bvec}_a^{\oracle}$ correspond to the reduced and full model estimators for testing \eqref{eqn:H0} if $\Aset$ was known.
Unlike $\hat \bvec^{\oracle}$, they include the elements of $\bvec_{\Mset}$ among the set of important parameters regardless of whether they are nonzero in the true model.
This ensures that $\hat{\bvec}_{0,\Mset}^{\oracle}$ and $\hat{\bvec}_{a,\Mset}^{\oracle}$ have tractable limiting distributions, making it possible to quantify their asymptotic uncertainty.

Just as we want an estimator which behaves like $\hat{\bvec}^{\oracle}$ asymptotically when our goal is to estimate $\bvec^*$, we want estimators which behave like $\hat{\bvec}_0^{\oracle}$ and $\hat{\bvec}_a^{\oracle}$ asymptotically when our goal is to test \eqref{eqn:H0}.
To achieve this, we assume that the penalty function $p_{\lambda}(\cdot)$ in \eqref{eqn:partial penalized loss} is folded-concave, satisfying
\begin{enumerate}[label = (\roman*), leftmargin=2\parindent]
	\item $p_{\lambda}(t)$ is increasing and concave on $t \in [0,\infty)$ with $p_{\lambda}(0) = 0$ \label{assumption: penalty i}
	\item $p_{\lambda}(t)$ is differentiable on $t \in (0, \infty)$ with $p_{\lambda}'(0) \defeq p_{\lambda}'(0^+) \geq a_1 \lambda$ \label{assumption: penalty ii}
	\item $p_{\lambda}'(t) \geq a_1 \lambda$ for $t \in (0, a_2 \lambda]$ \label{assumption: penalty iii}
	\item $p_{\lambda}'(t) = 0$ for $t \in [a\lambda, \infty)$ where $a > a_2$ \label{assumption: penalty iv}
\end{enumerate} 
where $a$, $a_1$, and $a_2$ are positive constants.
It is straightforward to show that \ref{assumption: penalty i} - \ref{assumption: penalty iv} are satisfied by the SCAD penalty \citep{Fan2001}, which has derivative
$
p_{\lambda}'(t) = \lambda \ind_{t \leq \lambda} + \frac{(a\lambda - t)_+}{a - 1}\ind_{t > \lambda}
$
where $a > 2$, and the MCP \citep{Zhang2010}, which has derivative
$
p_{\lambda}'(t) = \left(\lambda - \frac{t}{a}\right)_+
$
where $a > 1$, with $a_1 = a_2 = 1$ for the SCAD penalty and $a_1 = 1 - a^{-1}$, $a_2 = 1$ for the MCP.

When $p_{\lambda}(\cdot)$ is folded-concave, the partial penalized objective \eqref{eqn:partial penalized loss} is likely to be non-convex and to have multiple local minima.
Motivated by \citeauthor{Fan2014}'s (\citeyear{Fan2014}) strong oracle results for fully penalized estimators, we propose LLA algorithms to solve the partial penalized estimation problems \eqref{eqn:reduced estimator} and \eqref{eqn:full estimator} with folded-concave penalties, given as Algorithms \ref{algo:LLA for reduced estimator} and \ref{algo:LLA for full estimator} below.

\spacingset{1.25}
\begin{algorithm}[h]
	\caption{LLA algorithm for the reduced model estimator \eqref{eqn:reduced estimator}} \label{algo:LLA for reduced estimator}
	\begin{algorithmic}
		\State Initialize $\hat{\bvec}_0^{(0)} = \hat{\bvec}_0^{\initial}$ and compute the adaptive weights
		$$\hat{\bm{w}}^{(0)} = (\hat{w}_{m+1}^{(0)}, \ldots, \hat{w}_{p}^{(0)})' = (p_{\lambda}'(|\hat \bvec_{0, m+1}^{(0)}|), \ldots, p_{\lambda}'(|\hat \bvec_{0, p}^{(0)}|) )'$$
		\For{$b = 1, \ldots, B$}
			\State Compute $\hat \bvec_0^{(b)}$ by solving the weighted lasso problem
			\begin{equation}
				\hat \bvec_0^{(b)} 
				= \argmin_{\bvec} \ell_n(\bvec) + \sum_{j = m+1}^p \hat w_j^{(b-1)}|\beta_j| \text{ \, subject to \, } \Cmat \bvec_{\Mset} = \bm{t} \label{algo:reduced model lasso update}
			\end{equation}
			\State Update the adaptive weights: $\hat{w}_{j}^{(b)} = p_{\lambda}'(|\hat \bvec_{0, j}^{(b)}|)$ for $j = m+1, \ldots, p$
		\EndFor
	\end{algorithmic}
\end{algorithm}

\begin{algorithm}[h]
	\caption{LLA algorithm for the full model estimator \eqref{eqn:full estimator}} \label{algo:LLA for full estimator}
	\begin{algorithmic}
		\State Initialize $\hat{\bvec}_a^{(0)} = \hat{\bvec}_a^{\initial}$ and compute the adaptive weights
		$$\hat{\bm{w}}^{(0)} = (\hat{w}_{m+1}^{(0)}, \ldots, \hat{w}_{p}^{(0)})' = (p_{\lambda}'(|\hat \bvec_{a, m+1}^{(0)}|), \ldots, p_{\lambda}'(|\hat \bvec_{a, p}^{(0)}|) )'$$
		\For{$b = 1, \ldots, B$}
			\State Compute $\hat \bvec_a^{(b)}$ by solving the weighted lasso problem
			\begin{equation}
				\hat \bvec_a^{(b)} = \argmin_{\bvec} \ell_n(\bvec) + \sum_{j = m+1}^p \hat w_j^{(b-1)}|\beta_j| \label{algo:full model lasso update}
			\end{equation} 
			\State Update the adaptive weights: $\hat{w}_{j}^{(b)} = p_{\lambda}'(|\hat \bvec_{a, j}^{(b)}|)$ for $j = m+1, \ldots, p$
		\EndFor
	\end{algorithmic}
\end{algorithm}
\spacingset{1.45}

Our LLA algorithms differ from the LLA algorithms proposed by \cite{Zou2008} and \cite{Fan2014} in two key respects: 
First, the elements of $\bvec_{\Mset}$ are not penalized in our LLA updates \eqref{algo:reduced model lasso update} and \eqref{algo:full model lasso update}, as our LLAs approximate the partial penalized loss \eqref{eqn:partial penalized loss} rather than a fully penalized loss.
Second, the LLA updates \eqref{algo:reduced model lasso update} in Algorithm \ref{algo:LLA for reduced estimator} are subject to the linear constraints from \eqref{eqn:reduced estimator}.
As a consequence, \eqref{algo:reduced model lasso update} cannot be solved with standard implementations of the lasso.

\section{Strong Oracle Results for the LLA Solutions} \label{sec:lla theory}
Having developed LLA algorithms to solve \eqref{eqn:reduced estimator} and \eqref{eqn:full estimator}, we aim to establish strong oracle results for the two-step LLA solutions.
We start by examining the general case where $\ell_n(\bvec)$ is a convex loss function, then narrow our focus to partial penalized GLMs.

\subsection{Conditions for LLA convergence}
We begin by identifying conditions under which the LLA algorithms for the reduced and full models converge to the reduced and full oracle estimators, respectively, in two steps, in the general setting where $\ell_n(\bvec)$ is a convex loss function.
For ease of presentation, we define $\Sset = \Mset^c \cap \Aset$ such that $\Sset$ and $\Mset$ are disjoint and $\Mset \cup \Sset = \Mset \cup \Aset$. 
Let $s = |\Sset|$.
We assume that $\hat{\bvec}_0^{\oracle}$ and $\hat{\bvec}_a^{\oracle}$ are the unique solutions to \eqref{eqn:reduced oracle} and \eqref{eqn:full oracle}, meaning that they satisfy
\begin{enumerate}[label = (A\arabic*) , start = 1 , leftmargin=2\parindent]
	\item $ 
	\exists_{\nvec \in \reals^r} \ni \nabla_{\Mset \cup \Sset} \ell_n(\hat{\bvec}_0^{\oracle}) 
	= \begin{bmatrix}
		\Cmat'\nvec\\
		\bm{0}
	\end{bmatrix}
	$, $ \Cmat \hat{\bvec}_{0, \Mset}^{\oracle} = \bm{t} $ \label{assumption:reduced oracle unique} \text{ and }
	\item $\nabla_{\Mset \cup \Sset} \ell_n(\hat{\bvec}_{a}^{\oracle}) = \bm{0}$. \label{assumption:full oracle unique}
\end{enumerate}
We further assume that $\bvec^*$ satisfies the following minimum signal strength condition
\begin{enumerate}[label = (A\arabic*) , start = 3 , leftmargin=2\parindent]
	\item $\norm{\bvec^*_{\Sset}}_{\min} > (a+1)\lambda$. \label{assumption: min signal for betaS}
\end{enumerate}

Our first theorem establishes conditions under which the LLA algorithms for \eqref{eqn:reduced estimator} and \eqref{eqn:full estimator} find $\hat{\bvec}_0^{\oracle}$ and $\hat{\bvec}_a^{\oracle}$, respectively, in one step:
\begin{thm} \label{thm:one step LLA}
	Suppose that we are trying to solve the partial penalized minimization problems \eqref{eqn:reduced estimator} and \eqref{eqn:full estimator} with a folded-concave penalty $p_{\lambda}(\cdot)$ satisfying \emph{\ref{assumption: penalty i}} - \emph{\ref{assumption: penalty iv}} and that conditions \emph{\ref{assumption:reduced oracle unique}} - \emph{\ref{assumption: min signal for betaS}} hold.
	Let $a_0 = \min\{1, a_2\}$.
	Under the event
	\begin{equation}
		\Eset_{01} = \left\{ \norm{\hat{\bvec}_{0, \Mset^c}^{\initial} - \bvec_{\Mset^c}^*}_{\max} \leq a_0 \lambda \right\} \cap \left\{ \norm{ \nabla_{(\Mset \cup \Sset)^c} \ell_n( \hat{\bvec}_0^{\oracle} )  }_{\max} < a_1 \lambda \right\}, \label{eqn:one step lla reduced estimator event}
	\end{equation}
	the LLA algorithm for the reduced model estimator \eqref{eqn:reduced estimator} initialized by $\hat{\bvec}_0^{\initial}$ finds $\hat{\bvec}_0^{\oracle}$ in one step. 
	Likewise, under the event
	\begin{equation}
		\Eset_{a1} = \left\{ \norm{\hat{\bvec}_{a, \Mset^c}^{\initial} - \bvec_{\Mset^c}^*}_{\max} \leq a_0 \lambda \right\} \cap \left\{ \norm{ \nabla_{(\Mset \cup \Sset)^c} \ell_n( \hat{\bvec}_a^{\oracle} )  }_{\max} < a_1 \lambda \right\}, \label{eqn:one step lla full estimator event}
	\end{equation}
	the LLA algorithm for the full model estimator \eqref{eqn:full estimator} initialized by $\hat{\bvec}_a^{\initial}$ finds $\hat{\bvec}_a^{\oracle}$ in one step.
\end{thm}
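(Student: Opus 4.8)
The plan is to exploit the convexity of each first LLA iteration. Since $\ell_n$ is convex and the weighted $\ell_1$ term in \eqref{algo:reduced model lasso update} and \eqref{algo:full model lasso update} is convex, each first update is a convex program (with linear equality constraints in the reduced case), so its Karush--Kuhn--Tucker (KKT) conditions are necessary and sufficient for a global minimizer. I would therefore (i) read off the adaptive weights $\hat{\bm w}^{(0)}$ generated by the initial estimator on the relevant event, and (ii) verify that the corresponding oracle estimator satisfies the KKT system of the first weighted lasso, so that the one-step LLA output coincides with the oracle.

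First I would control the weights, fixing the reduced case and working on $\Eset_{01}$. For a signal coordinate $j \in \Sset$, the minimum-signal condition \ref{assumption: min signal for betaS} gives $|\beta_j^*| > (a+1)\lambda$, while the first event in \eqref{eqn:one step lla reduced estimator event} gives $|\hat{\bvec}_{0,j}^{\initial} - \beta_j^*| \le a_0\lambda \le \lambda$ (using $a_0 = \min\{1,a_2\} \le 1$); hence $|\hat{\bvec}_{0,j}^{\initial}| > a\lambda$ and penalty property \ref{assumption: penalty iv} forces $\hat w_j^{(0)} = 0$. For a noise coordinate $j \in (\Mset \cup \Sset)^c$ we have $\beta_j^* = 0$, so the same event yields $|\hat{\bvec}_{0,j}^{\initial}| \le a_0\lambda \le a_2\lambda$, and penalty properties \ref{assumption: penalty ii} and \ref{assumption: penalty iii} give $\hat w_j^{(0)} \ge a_1\lambda$. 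Thus the first update leaves exactly the coordinates in $\Mset \cup \Sset$ unpenalized and assigns weight at least $a_1\lambda$ to every noise coordinate.

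Next I would certify that $\hat{\bvec}_0^{\oracle}$ solves \eqref{algo:reduced model lasso update} with these weights. Writing the stationarity conditions with a multiplier for $\Cmat \bvec_{\Mset} = \bm t$, I would take that multiplier to be exactly the $\nvec$ supplied by \ref{assumption:reduced oracle unique}: this matches $\nabla_{\Mset}\ell_n(\hat{\bvec}_0^{\oracle}) = \Cmat'\nvec$ on the unpenalized block $\Mset$, and on $\Sset$ the zero weight together with $\nabla_{\Sset}\ell_n(\hat{\bvec}_0^{\oracle}) = \bm 0$ (again from \ref{assumption:reduced oracle unique}) closes the stationarity. For the noise block I would invoke the second event in \eqref{eqn:one step lla reduced estimator event}, namely $\norm{\nabla_{(\Mset\cup\Sset)^c}\ell_n(\hat{\bvec}_0^{\oracle})}_{\max} < a_1\lambda \le \hat w_j^{(0)}$, which is precisely the strict dual-feasibility inequality for the zero coordinates $\hat{\bvec}_{0,(\Mset\cup\Sset)^c}^{\oracle} = \bm 0$; primal feasibility $\Cmat\hat{\bvec}_{0,\Mset}^{\oracle} = \bm t$ is also part of \ref{assumption:reduced oracle unique}. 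The full-model case is identical but simpler: there is no constraint and no multiplier, so I would use $\nabla_{\Mset\cup\Sset}\ell_n(\hat{\bvec}_a^{\oracle}) = \bm 0$ from \ref{assumption:full oracle unique} on the unpenalized and signal blocks and the strict inequality in \eqref{eqn:one step lla full estimator event} on the noise block.

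The main obstacle is upgrading ``is a minimizer'' to ``is the output,'' i.e. uniqueness. I would argue that the strict inequality $< a_1\lambda$ in both events yields strict dual feasibility, which by the standard lasso argument forces every minimizer of the first update to vanish on $(\Mset\cup\Sset)^c$; restricted to $\bvec_{(\Mset\cup\Sset)^c} = \bm 0$ the weighted lasso carries no penalty (the $\Sset$-weights are zero), so it reduces to exactly the oracle program \eqref{eqn:reduced oracle} (respectively \eqref{eqn:full oracle}), whose solution is unique by the standing assumptions \ref{assumption:reduced oracle unique}--\ref{assumption:full oracle unique}. Hence the one-step LLA output is the oracle estimator. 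The only delicate bookkeeping is the reduced case, where one must handle the equality constraint alongside the unpenalized block $\Mset$ and confirm that the single $\nvec$ from \ref{assumption:reduced oracle unique} serves simultaneously as the Lagrange multiplier in the KKT system.
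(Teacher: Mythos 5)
Your proposal is correct and follows essentially the same route as the paper's proof: you control the adaptive weights identically (zero on $\Sset$ via \ref{assumption: penalty iv}, at least $a_1\lambda$ on $(\Mset\cup\Sset)^c$ via \ref{assumption: penalty ii}--\ref{assumption: penalty iii}), and your KKT verification with the multiplier $\nvec$ from \ref{assumption:reduced oracle unique} plus the strict-dual-feasibility uniqueness step is exactly the paper's convexity computation, which shows the weighted-lasso objective at any feasible $\bvec$ exceeds that at $\hat{\bvec}_0^{\oracle}$ unless $\bvec_{(\Mset\cup\Sset)^c}=\bm 0$, and then invokes uniqueness of the oracle solution. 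The only difference is presentational: the paper spells out the subgradient/KKT sufficiency argument as an explicit first-order inequality rather than citing KKT theory.
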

\noindent As an immediate corollary to Theorem \ref{thm:one step LLA}, we derive the following finite sample bounds for the probabilities that the LLA algorithms find their respective oracle estimators in one step:
\begin{cor}\label{cor:one step LLA prob}
	Suppose that we are trying to solve the partial penalized minimization problems \eqref{eqn:reduced estimator} and \eqref{eqn:full estimator} with a folded-concave penalty $p_{\lambda}(\cdot)$ satisfying \emph{\ref{assumption: penalty i}} - \emph{\ref{assumption: penalty iv}} and that conditions \emph{\ref{assumption:reduced oracle unique}} - \emph{\ref{assumption: min signal for betaS}} hold.
	Then with probability at least $1 - \delta_{00} - \delta_{01}$ the LLA algorithm for the reduced model estimator \eqref{eqn:reduced estimator} initialized by $\hat{\bvec}_0^{\initial}$ finds $\hat{\bvec}_0^{\oracle}$ in one step, where
	$$ \delta_{00} = \Pr \left( \norm{\hat{\bvec}_{0, \Mset^c}^{\initial} - \bvec_{\Mset^c}^*}_{\max} > a_0 \lambda \right) $$
	and
	$$ \delta_{01} = \Pr \left( \norm{ \nabla_{(\Mset \cup \Sset)^c} \ell_n( \hat{\bvec}_0^{\oracle} )  }_{\max} \geq a_1 \lambda \right).$$
	Likewise, with probability at least $1 - \delta_{a0} - \delta_{a1}$ the LLA algorithm for the full model estimator \eqref{eqn:full estimator} initialized by $\hat{\bvec}_a^{\initial}$ finds $\hat{\bvec}_a^{\oracle}$ in one step, where
	$$ \delta_{a0} = \Pr \left( \norm{\hat{\bvec}_{a, \Mset^c}^{\initial} - \bvec_{\Mset^c}^*}_{\max} > a_0 \lambda \right) $$
	and
	$$ \delta_{a1} = \Pr \left( \norm{ \nabla_{(\Mset \cup \Sset)^c} \ell_n( \hat{\bvec}_a^{\oracle} )  }_{\max} \geq a_1 \lambda \right).$$
\end{cor}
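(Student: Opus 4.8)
The plan is to treat this as a direct probabilistic translation of Theorem~\ref{thm:one step LLA}, exploiting the fact that the theorem is a \emph{deterministic} implication: on the event $\Eset_{01}$ the reduced-model LLA returns $\hat{\bvec}_0^{\oracle}$ after a single iteration, with no further appeal to the randomness of the data beyond what is already encoded in $\Eset_{01}$. Writing $E_0$ for the (random) event that the reduced-model LLA initialized at $\hat{\bvec}_0^{\initial}$ recovers $\hat{\bvec}_0^{\oracle}$ in one step, Theorem~\ref{thm:one step LLA} says precisely that $\Eset_{01} \subseteq E_0$, hence $\Pr(E_0) \geq \Pr(\Eset_{01})$. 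It therefore suffices to lower-bound $\Pr(\Eset_{01})$, and the corollary reduces to a two-event Bonferroni bound.

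First I would write $\Eset_{01} = A \cap B$, where $A = \{ \norm{\hat{\bvec}_{0, \Mset^c}^{\initial} - \bvec_{\Mset^c}^*}_{\max} \leq a_0 \lambda \}$ and $B = \{ \norm{ \nabla_{(\Mset \cup \Sset)^c} \ell_n( \hat{\bvec}_0^{\oracle} ) }_{\max} < a_1 \lambda \}$. By De Morgan's law $\Eset_{01}^c = A^c \cup B^c$, so the union bound gives $\Pr(\Eset_{01}) = 1 - \Pr(A^c \cup B^c) \geq 1 - \Pr(A^c) - \Pr(B^c)$. The only point requiring care is matching the complements to the probabilities named in the statement, which is where the strict-versus-non-strict inequalities must be tracked: the complement of the non-strict event $A$ is the strict event $\{ \norm{\hat{\bvec}_{0, \Mset^c}^{\initial} - \bvec_{\Mset^c}^*}_{\max} > a_0 \lambda \}$, whose probability is exactly $\delta_{00}$, while the complement of the strict event $B$ is the non-strict event $\{ \norm{ \nabla_{(\Mset \cup \Sset)^c} \ell_n( \hat{\bvec}_0^{\oracle} ) }_{\max} \geq a_1 \lambda \}$, whose probability is exactly $\delta_{01}$. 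Chaining these identifications yields $\Pr(E_0) \geq \Pr(\Eset_{01}) \geq 1 - \delta_{00} - \delta_{01}$, as claimed.

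The full-model bound follows from the identical argument with $\Eset_{a1}$ in place of $\Eset_{01}$: invoke the full-model half of Theorem~\ref{thm:one step LLA} to get the deterministic inclusion, then apply De Morgan and the union bound to the two events defining $\Eset_{a1}$, reading off $\delta_{a0}$ and $\delta_{a1}$ as the complementary probabilities with the same inequality bookkeeping. I do not anticipate any genuine obstacle here, since the entire analytic content lives in Theorem~\ref{thm:one step LLA} and the corollary is merely Bonferroni's inequality; I would only flag that no independence between the two events is assumed or needed, so the union bound applies verbatim, and that keeping the strict and non-strict inequalities aligned is the sole place an error could creep in.
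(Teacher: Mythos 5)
Your proposal is correct and takes essentially the same route the paper intends: the paper presents this corollary as an immediate consequence of Theorem \ref{thm:one step LLA} via the union bound, which is exactly your deterministic-inclusion-plus-Bonferroni argument, including the matching of strict and non-strict inequalities so that $\Pr(A^c) = \delta_{00}$ and $\Pr(B^c) = \delta_{01}$ exactly. The paper gives no separate written proof, so your argument supplies precisely the intended reasoning for both the reduced and full model bounds.
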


Put concisely, Corollary \ref{cor:one step LLA prob} states that $\Pr(\hat{\bvec}^{(1)}_0 = \hat{\bvec}^{\oracle}_0) \geq 1 - \delta_{00} - \delta_{01}$ and $\Pr(\hat{\bvec}^{(1)}_a = \hat{\bvec}^{\oracle}_a) \geq 1 - \delta_{a0} - \delta_{a1}$.
We see that $\delta_{00}$ and $\delta_{a0}$ measure how close the initial estimators are to the true parameter vector. 
Because $\nabla_{(\Mset \cup \Sset)^c} \ell_n( \bvec^* )$ is concentrated around $\bm 0$, we can view $\delta_{01}$ and $\delta_{a1}$ as measuring how close $\hat{\bvec}^{\oracle}_0$ and $\hat{\bvec}^{\oracle}_a$ are to $\bvec^*$ in terms of the score function.

Our next theorem identifies conditions under which the LLA algorithms, having found $\hat{\bvec}_0^{\oracle}$ and $\hat{\bvec}_a^{\oracle}$, return to them in their next steps.
\begin{thm}\label{thm:two step lla}
	Suppose that we are trying to solve the partial penalized minimization problems \eqref{eqn:reduced estimator} and \eqref{eqn:full estimator} with a folded-concave penalty $p_{\lambda}(\cdot)$ satisfying \emph{\ref{assumption: penalty i}} - \emph{\ref{assumption: penalty iv}} and that conditions \emph{\ref{assumption:reduced oracle unique}} and \emph{\ref{assumption:full oracle unique}} hold.
	Under the event
	\begin{equation}
		\Eset_{02} = \left\{ \norm{ \nabla_{(\Mset \cup \Sset)^c} \ell_n( \hat{\bvec}_0^{\oracle} )  }_{\max} < a_1 \lambda \right\} \cap \left\{  \norm{\hat{\bvec}_{0, \Sset}^{\oracle}}_{\min} > a\lambda \right\}, \label{eqn:two step lla reduced estimator event}
	\end{equation}
	if the LLA algorithm for the reduced model estimator \eqref{eqn:reduced estimator} has found $\hat{\bvec}_0^{\oracle}$, then it will find $\hat{\bvec}_0^{\oracle}$ again in the next step.
	Likewise, under the event
	\begin{equation}
		\Eset_{a2} =  \left\{ \norm{ \nabla_{(\Mset \cup \Sset)^c} \ell_n( \hat{\bvec}_a^{\oracle} )  }_{\max} < a_1 \lambda \right\} \cap \left\{  \norm{\hat{\bvec}_{a, \Sset}^{\oracle}}_{\min} > a\lambda \right\}, \label{eqn:two step lla full estimator event}
	\end{equation}
	if the LLA algorithm for the full model estimator \eqref{eqn:full estimator} has found $\hat{\bvec}_a^{\oracle}$, then it will find $\hat{\bvec}_a^{\oracle}$ again in the next step.
\end{thm}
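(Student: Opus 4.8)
The plan is to exploit the convexity of the weighted lasso updates \eqref{algo:reduced model lasso update} and \eqref{algo:full model lasso update}: once convexity is in hand it suffices to verify that the oracle estimators satisfy the corresponding Karush--Kuhn--Tucker (KKT) conditions and then to argue that they are the \emph{unique} minimizers, which is what guarantees that the algorithms return exactly $\hat{\bvec}_0^{\oracle}$ and $\hat{\bvec}_a^{\oracle}$ rather than drifting to some other solution. I focus on the reduced model; the full model argument is identical with the constraint and the $\Cmat'\nvec$ bookkeeping removed and \ref{assumption:reduced oracle unique} replaced by \ref{assumption:full oracle unique}.

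First I would determine the adaptive weights produced once the algorithm has found $\hat{\bvec}_0^{\oracle}$. For $j \in \Sset$ the min-signal half of $\Eset_{02}$ gives $|\hat{\bvec}_{0,j}^{\oracle}| > a\lambda$, so penalty property \ref{assumption: penalty iv} forces $\hat w_j = p_\lambda'(|\hat{\bvec}_{0,j}^{\oracle}|) = 0$; thus the signal coordinates become effectively unpenalized. For $j \in (\Mset \cup \Sset)^c$ we have $\hat{\bvec}_{0,j}^{\oracle} = 0$, so property \ref{assumption: penalty ii} gives $\hat w_j = p_\lambda'(0) \geq a_1\lambda$. With these weights I would then check optimality of $\hat{\bvec}_0^{\oracle}$ for the convex update: the stationarity conditions on $\Mset \cup \Sset$ follow directly from \ref{assumption:reduced oracle unique} (the $\Cmat'\nvec$ term absorbs the Lagrange multiplier on $\Mset$, and the vanishing gradient on $\Sset$ matches the zero weights there), while the dual feasibility condition on $(\Mset\cup\Sset)^c$ reads $|\nabla_j \ell_n(\hat{\bvec}_0^{\oracle})| \leq \hat w_j$, which holds \emph{strictly} because the gradient half of $\Eset_{02}$ yields $\norm{\nabla_{(\Mset\cup\Sset)^c}\ell_n(\hat{\bvec}_0^{\oracle})}_{\max} < a_1\lambda \leq \hat w_j$.

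The crux --- and the step I expect to be the main obstacle --- is to show that $\hat{\bvec}_0^{\oracle}$ is not merely \emph{a} minimizer of the update but its \emph{unique} minimizer, since the weighted lasso need not be strictly convex. Here I would use the strict dual feasibility established above. Let $\tilde{\bvec}$ be any minimizer and set $d = \tilde{\bvec} - \hat{\bvec}_0^{\oracle}$; because both points minimize the (convex) update objective, the objective is constant on the joining segment, so its one-sided directional derivative at $\hat{\bvec}_0^{\oracle}$ along $d$ is zero. Feasibility forces $\Cmat d_\Mset = \bm 0$, which with \ref{assumption:reduced oracle unique} kills the $\Mset$ contribution $\nvec'\Cmat d_\Mset$, and the zero gradient and zero weights on $\Sset$ eliminate that block too, so the directional derivative collapses to $\sum_{j \in (\Mset\cup\Sset)^c}\{\nabla_j\ell_n(\hat{\bvec}_0^{\oracle})\,d_j + \hat w_j|d_j|\}$. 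Each summand is bounded below by $(\hat w_j - |\nabla_j\ell_n(\hat{\bvec}_0^{\oracle})|)|d_j| \geq 0$ and is strictly positive whenever $d_j \neq 0$, by the strict inequality above; since the total vanishes, I conclude $d_{(\Mset\cup\Sset)^c} = \bm 0$, i.e. every minimizer is supported on $\Mset \cup \Sset$.

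Finally, restricted to the coordinates $\Mset \cup \Sset$ the weights vanish, so on this subspace the update objective coincides with $\ell_n(\bvec)$ subject to $\Cmat\bvec_\Mset = \bm t$ --- exactly the reduced oracle problem \eqref{eqn:reduced oracle}. Hence any minimizer $\tilde{\bvec}$ solves that problem, and the uniqueness built into \ref{assumption:reduced oracle unique} forces $\tilde{\bvec} = \hat{\bvec}_0^{\oracle}$. The weighted lasso update therefore has $\hat{\bvec}_0^{\oracle}$ as its unique solution, so the algorithm finds it again. Repeating the argument verbatim for the full model --- dropping the constraint, using \ref{assumption:full oracle unique} in place of \ref{assumption:reduced oracle unique}, and reading the weight and gradient bounds off $\Eset_{a2}$ --- completes the proof.
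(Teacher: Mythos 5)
Your proposal is correct and takes essentially the same route as the paper: you compute the identical adaptive weights (zero on $\Sset$ by penalty property \ref{assumption: penalty iv} under the min-signal half of the event, at least $a_1\lambda$ on $(\Mset \cup \Sset)^c$ by property \ref{assumption: penalty ii} since the oracle vanishes there), and you then use convexity of $\ell_n$, condition \ref{assumption:reduced oracle unique} (resp.\ \ref{assumption:full oracle unique}), and the strict gradient bound to conclude that the oracle is the unique solution of the weighted-lasso update, closing with the assumed uniqueness of the oracle estimator. Your KKT-verification-plus-directional-derivative packaging of the uniqueness step is just a local reformulation of the paper's global convexity comparison (the paper bounds the update objective at any feasible point below by its value at the oracle, with strict inequality unless the point is supported on $\Mset \cup \Sset$), so the two arguments are mathematically equivalent.
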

\noindent Theorem \ref{thm:two step lla} tells us that if $\Eset_{02}$ and $\Eset_{a2}$ are satisfied, then once the LLA algorithms have found the oracle estimators they will return to them again in every subsequent step.
As such, if $\mathcal{E}_{01} \cap \mathcal{E}_{02}$ and $\mathcal{E}_{a1} \cap \mathcal{E}_{a2}$ hold, then the two-step LLA solutions are equal to the fully-converged LLA solutions.

Together Theorems \ref{thm:one step LLA} and \ref{thm:two step lla} establish that if $\mathcal{E}_{01} \cap \mathcal{E}_{02}$ and $\mathcal{E}_{a1} \cap \mathcal{E}_{a2}$ hold, then Algorithms \ref{algo:LLA for reduced estimator} and \ref{algo:LLA for full estimator} will find $\hat{\bvec}^{\oracle}_0$ and $\hat{\bvec}^{\oracle}_a$, respectively, in one step and converge to them in two steps.
By applying the union bound, we derive the following finite sample bound for the probability that the LLA algorithms converge to their respective oracle estimators in two steps:
\begin{cor}\label{cor:two step LLA prob}
	Suppose that we are trying to solve the partial penalized minimization problems \eqref{eqn:reduced estimator} and \eqref{eqn:full estimator} with a folded-concave penalty $p_{\lambda}(\cdot)$ satisfying \emph{\ref{assumption: penalty i}} - \emph{\ref{assumption: penalty iv}} and that conditions \emph{\ref{assumption:reduced oracle unique}} - \emph{\ref{assumption: min signal for betaS}} hold. Then with probability at least  $1 - \delta_{00} - \delta_{01} - \delta_{02}$ the LLA algorithm for the reduced model estimator \eqref{eqn:reduced estimator} initialized by $\hat{\bvec}_0^{\initial}$ converges to $\hat{\bvec}_0^{\oracle}$ in two steps, where
	$$ \delta_{02} = \Pr\left(\norm{\hat{\bvec}_{0, \Sset}^{\oracle}}_{\min} \leq a\lambda\right). $$
	Likewise, with probability at least  $1 - \delta_{a0} - \delta_{a1} - \delta_{a2}$ the LLA algorithm for the full model estimator \eqref{eqn:full estimator} initialized by $\hat{\bvec}_a^{\initial}$ converges to $\hat{\bvec}_a^{\oracle}$ in two steps, where
	$$ \delta_{a2} = \Pr\left(\norm{\hat{\bvec}_{a, \Sset}^{\oracle}}_{\min} \leq a\lambda\right). $$
\end{cor}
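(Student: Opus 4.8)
The plan is to combine the deterministic guarantees of Theorems \ref{thm:one step LLA} and \ref{thm:two step lla} into a single two-step convergence event and then control its probability with the union bound, taking care not to double-count the gradient event that the two theorems share.

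First I would establish the deterministic implication for the reduced model. On the event $\Eset_{01}$, Theorem \ref{thm:one step LLA} gives $\hat{\bvec}_0^{(1)} = \hat{\bvec}_0^{\oracle}$. Conditional on the algorithm having reached $\hat{\bvec}_0^{\oracle}$, Theorem \ref{thm:two step lla} guarantees under $\Eset_{02}$ that the next iterate returns the same point, so $\hat{\bvec}_0^{(2)} = \hat{\bvec}_0^{\oracle}$. Hence on $\Eset_{01} \cap \Eset_{02}$ the reduced-model LLA algorithm converges to $\hat{\bvec}_0^{\oracle}$ in two steps. This is precisely the statement already noted in the text preceding the corollary.

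Next I would write out $\Eset_{01} \cap \Eset_{02}$ explicitly and observe that the gradient event $\{\norm{\nabla_{(\Mset \cup \Sset)^c} \ell_n(\hat{\bvec}_0^{\oracle})}_{\max} < a_1 \lambda\}$ is common to both $\Eset_{01}$ and $\Eset_{02}$. Consequently the intersection reduces to the conjunction of only three distinct events: the initialization event $\{\norm{\hat{\bvec}_{0,\Mset^c}^{\initial} - \bvec_{\Mset^c}^*}_{\max} \leq a_0 \lambda\}$, the shared gradient event, and the minimum-signal event $\{\norm{\hat{\bvec}_{0,\Sset}^{\oracle}}_{\min} > a\lambda\}$. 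Taking complements, these three events fail with probabilities $\delta_{00}$, $\delta_{01}$, and $\delta_{02}$, respectively, so by the union bound the probability that at least one fails is at most $\delta_{00} + \delta_{01} + \delta_{02}$. Therefore the two-step convergence event has probability at least $1 - \delta_{00} - \delta_{01} - \delta_{02}$. The identical argument applied to $\Eset_{a1} \cap \Eset_{a2}$, again noting that the gradient event is shared between the two, yields the bound $1 - \delta_{a0} - \delta_{a1} - \delta_{a2}$ for the full model.

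There is no substantive obstacle here, since the heavy lifting was done in Theorems \ref{thm:one step LLA} and \ref{thm:two step lla}; the only point requiring care is the bookkeeping in the final step. Because the gradient concentration event appears in both $\Eset_{01}$ and $\Eset_{02}$ (and likewise in both $\Eset_{a1}$ and $\Eset_{a2}$), it must be counted exactly once when bounding the failure probability. Counting it twice would instead produce the weaker, incorrect bound involving a $2\delta_{01}$ term. Apart from this, the argument is a routine application of De Morgan's laws and subadditivity of probability to the events already analyzed.
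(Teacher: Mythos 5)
Your proposal is correct and follows essentially the same route as the paper, which presents this corollary as an immediate consequence of Theorems \ref{thm:one step LLA} and \ref{thm:two step lla}: the intersection $\Eset_{01} \cap \Eset_{02}$ (resp.\ $\Eset_{a1} \cap \Eset_{a2}$) guarantees two-step convergence, and the union bound over the three distinct constituent events gives the stated probability. Your explicit observation that the gradient event is shared between $\Eset_{01}$ and $\Eset_{02}$, and therefore must be counted only once to avoid a spurious $2\delta_{01}$ term, is exactly the bookkeeping the paper's bound relies on.
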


Corollary \ref{cor:two step LLA prob} provides that $\Pr(\hat{\bvec}^{(2)}_0 = \hat{\bvec}^{\oracle}_0) \geq 1 - \delta_{00} - \delta_{01} - \delta_{02}$ and $\Pr(\hat{\bvec}^{(2)}_a = \hat{\bvec}^{\oracle}_a) \geq 1 - \delta_{a0} - \delta_{a1} - \delta_{a2}$.
We see that $\delta_{02}$ and $\delta_{a2}$ effectively measure how close the oracle estimators are to $\bvec^*$, since $\delta_{i2} \leq \Pr\left(\norm{ \hat{\bvec}_{i, \Sset}^{\oracle} - \bvec^*_{\Sset} }_2 >\lambda\right)$ for $i = 0,a$ under condition \ref{assumption: min signal for betaS}.
If we can show that $\delta_{ij} \to 0$ for all $i = 0,a$ and $j = 0,1,2$, then we have $\hat{\bvec}^{(2)}_0 = \hat{\bvec}^{\oracle}_0$ and $\hat{\bvec}^{(2)}_a = \hat{\bvec}^{\oracle}_a$ with probability converging to $1$ as $n \to \infty$, meaning that the two-step LLA solutions, $\hat{\bvec}^{(2)}_0$ and $\hat{\bvec}^{(2)}_a$, possess the strong oracle property.

\subsection{LLA results for GLMs}
For the rest of the study, we narrow our focus to partial penalized GLMs.
Our aim is to apply Corollary \ref{cor:two step LLA prob} to show that the two-step LLA solutions to the partial penalized estimation problems for GLMs, \eqref{eqn:Shi reduced} and \eqref{eqn:Shi full}, are equal to the reduced and full oracle estimators with probability converging to $1$ as $n \to \infty$.

\subsubsection{Assumptions for GLMs}
Let $\yvec = (y_1, \ldots, y_n)' \in \reals^n$ denote the vector of response values and $\xmat = (\xvec_1, \ldots, \xvec_n)' \in \reals^{n \times p}$ denote the design matrix.
We treat $\xmat$ as fixed.
For the remainder of this study, our loss function is
\begin{equation}
	\ell_n(\bvec) = - \frac{1}{n} \left\{ \yvec'\xmat \bvec - \bm{1}_n' \bm{b}(\xmat \bvec)  \right\}, \label{eqn:glm loss}
\end{equation}
where for $\bm{v} \in \reals^n$, $\bm b(\bm v) = (b(v_1), \ldots, b(v_n))'$.
Define $\bm \mu (\bm v) = (b'(v_1), \ldots, b'(v_n))'$ and
$\bm \Sigma (\bm v) = \diag \{ (b''(v_1), \ldots, b''(v_n))'\}$.
Using this notation, we can express the gradient of $\ell_n(\bvec)$ as $\nabla \ell_n(\bvec) = - \frac{1}{n} \xmat'(\yvec - \bm \mu(\xmat \bvec))$
and the Hessian as $\nabla^2 \ell_n (\bvec) = \frac{1}{n} \xmat'\bm \Sigma (\xmat \bvec) \xmat$.

We examine the ultra-high dimensional setting, where $\log p = O(n^{\eta})$ for some $\eta \in (0, 1)$.
We assume the following about the true coefficient vector $\bvec^*$:
\begin{enumerate}[label = (A\arabic*) , start = 4 , leftmargin=2\parindent]
	\item $\bvec^*$ is sparse and satisfies $\Cmat \bvec_{\Mset}^* = \bm{t} + \bm{h}_n$, where $\bm{h}_n \to \bm{0}$; \label{assumption:sparsity} 
	$\lambda_{\max}\{ (\Cmat \Cmat')^{-1} \} = O(1)$; and \label{assumption:Cmat eigenvalue} 
	$\norm{\bm{h}_n}_2 = O(\sqrt{\min\{s+m-r,r\}/n})$. \label{assumption:hn order}
\end{enumerate}
In effect, \ref{assumption:sparsity} states that either the null hypothesis \eqref{eqn:H0} or a sequence of local alternatives holds for the true model.
We allow the penalty parameter $\lambda$ to vary with $n$ and assume the following about its rate
\begin{enumerate}[label=(A\arabic*), start=5, leftmargin=2\parindent]
	\item $\max\{\sqrt{s+m}, \sqrt{\log p}\}/\sqrt{n} = o(\lambda)$. \label{assumption:rate of lambda}
\end{enumerate}
Define $\mathcal{N}_0 = \{\bvec \in \reals^p: \norm{\bvec_{\Mset \cup \Sset} - \bvec^*_{\Mset \cup \Sset} }_2 \leq \sqrt{(s+m) \log n / n}, \bvec_{(\Mset \cup \Sset)^c} = \bm 0 \}$.
We assume
\begin{enumerate}[label=(A\arabic*), start=6, leftmargin=2\parindent]
	\item 
	$\inf_{\bvec \in \mathcal{N}_0} \lambda_{\min} \left\{ \xmat_{\Mset \cup \Sset}' \bm \Sigma(\xmat \bvec) \xmat_{\Mset \cup \Sset} \right\} \geq c n$ for some $c > 0$; \label{assumption:glm:min hessian}\\
	$\sup_{\bvec \in \mathcal{N}_0} \max_{1 \leq j \leq p} \lambda_{\max} \left\{ \xmat_{\Mset \cup \Sset}' \diag\{ |\xmat_j|\circ |\bm b'''(\xmat \bvec)| \}\xmat_{\Mset \cup \Sset} \right\} = O(n)$; \label{assumption:glm:third derivative bound}\\
	$\lambda_{\max} \left\{ \xmat_{\Mset \cup \Sset}' \bm \Sigma(\xmat \bvec^*) \xmat_{\Mset \cup \Sset} \right\} = O(n)$; and \label{assumption:glm:max hessian}
	$ \norm{\xmat_{(\Mset \cup \Sset)^c}' \bm \Sigma(\xmat \bvec^*) \xmat_{\Mset \cup \Sset}}_{\infty} = O(n)$
	\item 
	$\exists_{B_1, B_2}$ such that $\forall_n$, $\max_{1 \leq j \leq p} \norm{\xmat_j}_{2}^2 \leq B_1 n $ and
	$\max_{1 \leq j \leq p} \norm{\xmat_j}_{\infty} =  B_2 \sqrt{\frac{n}{\log p}}$  \label{assumption:glm:design column rates}
	\item
	$\exists_{M, v_0>0}$ such that $\forall_n$, \label{assumption:glm:expectation bound for chernoff}
	$$ \max_{1 \leq i \leq n} \E \left[ \exp\left(\frac{|y_i - b'(\xvec_i'\bvec^*)|}{M} \right) - 1 - \frac{|y_i - b'(\xvec_i'\bvec^*)|}{M}\right]M^2 \leq \frac{v_0}{2} .$$
\end{enumerate}

\cite{Shi2019} also assume conditions \ref{assumption:hn order} - \ref{assumption:glm:expectation bound for chernoff} in their theoretical study of partial penalized estimators. 
(Moreover, in their Appendix S4, \cite{Shi2019} demonstrate that, when $\xmat$ is random, these conditions hold with high probability in linear, logistic, and Poisson regression under some mild assumptions.)
In addition, \cite{Shi2019} also place a minimum signal strength assumption---similar to our condition \ref{assumption: min signal for betaS}---on $\bvec^*_{\Sset}$, assuming that $\lambda = o(\norm{ \bvec^*_{\Sset} }_{\min})$.

\subsubsection{Convergence of exception probabilities}
We aim to prove that the exception probabilities, $\delta_{ij}$ for $i=0,a, j = 0,1,2$, from Corollaries \ref{cor:one step LLA prob} and \ref{cor:two step LLA prob} converge to $0$ as $n \to \infty$ when $\ell_n(\cdot)$ is the GLM loss \eqref{eqn:glm loss}.
Define
$$\delta_{i1}^{\text{GLM}} = \Pr \left( \norm{ \nabla_{(\Mset \cup \Sset)^c} \ell_n( \hat{\bvec}_i^{\oracle} )  }_{\max} \geq a_1 \lambda \right)$$ and
$$\delta_{i2}^{\text{GLM}} = \Pr\left(\norm{\hat{\bvec}_{i, \Sset}^{\oracle}}_{\min} \leq a\lambda\right)$$ where the index $i = 0,a$ denotes either the reduced or full model.
These probabilities relate to regularity conditions for the oracle estimators and do not depend on the initial estimators used.
Our first result for GLMs identifies conditions under which $\delta_{i1}^{\text{GLM}} \to 0$ and $\delta_{i2}^{\text{GLM}} \to 0$ as $n \to \infty$ for $i = 0,a$:
\begin{thm}\label{thm:glm lla covergence prob}
	If \emph{\ref{assumption:reduced oracle unique}}--\emph{\ref{assumption:glm:expectation bound for chernoff}} hold
	and $(s+m) = o(n^{1/2})$, 
	then $\delta_{01}^{\text{GLM}} \to 0$, $\delta_{02}^{\text{GLM}} \to 0$,
	$\delta_{a1}^{\text{GLM}} \to 0$, and $\delta_{a2}^{\text{GLM}} \to 0$ as $n \to \infty$.
\end{thm}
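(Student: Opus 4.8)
The plan is to reduce all four exception probabilities to a single workhorse estimate, namely the oracle estimation rate
\[
\norm{\hat{\bvec}_{i,\Mset \cup \Sset}^{\oracle} - \bvec^*_{\Mset \cup \Sset}}_2 = O_p\!\left(\sqrt{(s+m)/n}\right), \qquad i = 0, a,
\]
together with a concentration bound on the score $\nabla_{(\Mset \cup \Sset)^c}\ell_n(\bvec^*)$ evaluated at the truth. Because the oracle estimators are supported on $\Mset \cup \Sset$, this converts the ultra-high dimensional problem into a classical (though diverging-dimension) M-estimation analysis, and the two reductions then run in parallel for the reduced ($i=0$) and full ($i=a$) models.

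First I would dispatch $\delta_{i2}^{\text{GLM}}$. As in the remark preceding Corollary \ref{cor:two step LLA prob}, the minimum signal condition \ref{assumption: min signal for betaS} ensures that on the event $\{\norm{\hat{\bvec}_{i,\Sset}^{\oracle}}_{\min} \leq a\lambda\}$ some coordinate satisfies $|\hat\beta| \geq |\beta^*| - |\hat\beta - \beta^*| > (a+1)\lambda - \lambda$, forcing $\norm{\hat{\bvec}_{i,\Sset}^{\oracle} - \bvec^*_{\Sset}}_{\infty} > \lambda$; hence $\delta_{i2}^{\text{GLM}} \leq \Pr(\norm{\hat{\bvec}_{i,\Mset \cup \Sset}^{\oracle} - \bvec^*_{\Mset \cup \Sset}}_2 > \lambda)$, which tends to $0$ once the oracle rate is in hand, since $\sqrt{(s+m)/n} = o(\lambda)$ by \ref{assumption:rate of lambda}. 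For $\delta_{i1}^{\text{GLM}}$, I would split the gradient at the oracle as
\[
\nabla_{(\Mset \cup \Sset)^c}\ell_n(\hat{\bvec}_{i}^{\oracle}) = \nabla_{(\Mset \cup \Sset)^c}\ell_n(\bvec^*) + \left[\nabla_{(\Mset \cup \Sset)^c}\ell_n(\hat{\bvec}_{i}^{\oracle}) - \nabla_{(\Mset \cup \Sset)^c}\ell_n(\bvec^*)\right].
\]
The first term is coordinatewise a centered average of $y_i - b'(\xvec_i'\bvec^*)$; the sub-exponential moment bound \ref{assumption:glm:expectation bound for chernoff} with the column-norm control \ref{assumption:glm:design column rates} yields a Bernstein tail that, after a union bound over the $p - (s+m)$ coordinates, gives $\norm{\nabla_{(\Mset \cup \Sset)^c}\ell_n(\bvec^*)}_{\max} = O_p(\sqrt{\log p / n}) = o_p(\lambda)$ by \ref{assumption:rate of lambda}. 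For the second term, a mean value expansion (both points vanishing off $\Mset \cup \Sset$) produces the cross-block Hessian $\frac{1}{n}\xmat_{(\Mset \cup \Sset)^c}' \bm\Sigma \xmat_{\Mset \cup \Sset}$ acting on the oracle error; using $\norm{\cdot v}_{\max} \leq \norm{\cdot}_{\infty}\norm{v}_2$ with the cross-block control $\norm{\xmat_{(\Mset \cup \Sset)^c}' \bm\Sigma(\xmat\bvec^*)\xmat_{\Mset \cup \Sset}}_{\infty} = O(n)$ in \ref{assumption:glm:min hessian} and the oracle rate gives $O_p(\sqrt{(s+m)/n}) = o_p(\lambda)$. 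Summing, $\norm{\nabla_{(\Mset \cup \Sset)^c}\ell_n(\hat{\bvec}_{i}^{\oracle})}_{\max} = o_p(\lambda)$, so $\delta_{i1}^{\text{GLM}} \to 0$.

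The crux is therefore the oracle rate itself, which I would obtain by a local quadratic analysis of $\ell_n$ restricted to $\Mset \cup \Sset$. A second-moment computation, $\E\norm{\nabla_{\Mset \cup \Sset}\ell_n(\bvec^*)}_2^2 = \frac{\phi^*}{n^2}\tr\{\xmat_{\Mset \cup \Sset}'\bm\Sigma(\xmat\bvec^*)\xmat_{\Mset \cup \Sset}\} = O((s+m)/n)$ via the maximal-eigenvalue bound \ref{assumption:glm:max hessian}, gives $\norm{\nabla_{\Mset \cup \Sset}\ell_n(\bvec^*)}_2 = O_p(\sqrt{(s+m)/n})$; the restricted Hessian is bounded below uniformly on $\mathcal N_0$ by the minimal-eigenvalue condition \ref{assumption:glm:min hessian}. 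The decisive quantitative step is that the third-order Taylor remainder, controlled by the third-derivative bound in \ref{assumption:glm:third derivative bound}, scales like $(s+m)^{3/2}/n$ against the linear term $(s+m)^{1/2}/n^{1/2}$, with ratio $(s+m)/n^{1/2}$; the hypothesis $(s+m) = o(n^{1/2})$ is precisely what renders this remainder negligible, validating the quadratic approximation on $\mathcal N_0$ and letting a fixed-point argument confine the oracle minimizer there. For the reduced model I would additionally carry the Lagrange-multiplier block: the KKT characterization \ref{assumption:reduced oracle unique}, the eigenvalue control on $(\Cmat\Cmat')^{-1}$, and the local-alternative magnitude $\norm{\bm{h}_n}_2 = O(\sqrt{\min\{s+m-r,r\}/n})$ from \ref{assumption:sparsity} together guarantee that the multiplier $\nvec$ and the constraint contribute only at the same $O_p(\sqrt{(s+m)/n})$ order.

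The hard part will be closing the neighborhood argument for the constrained reduced-model estimator: one must simultaneously control the primal error and the Lagrange multiplier while certifying that the iterate never leaves $\mathcal N_0$, and it is here that the interplay between the diverging dimension $s+m$, the third-derivative remainder, and the constraint block is most delicate. Once the oracle rate is established uniformly for both $i = 0$ and $i = a$, the two reductions above deliver $\delta_{i1}^{\text{GLM}} \to 0$ and $\delta_{i2}^{\text{GLM}} \to 0$ immediately.
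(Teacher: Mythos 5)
Your two reduction steps are exactly the paper's: the bound $\delta_{i2}^{\text{GLM}} \leq \Pr\left(\norm{\hat{\bvec}_{i,\Mset\cup\Sset}^{\oracle} - \bvec^*_{\Mset\cup\Sset}}_2 > \lambda\right)$ via \ref{assumption: min signal for betaS}, and the split of $\nabla_{(\Mset\cup\Sset)^c}\ell_n(\hat{\bvec}_i^{\oracle})$ into the score at $\bvec^*$ (Bernstein bound from Proposition 4 of \cite{Fan2011} plus a union bound, giving $O_p(\sqrt{\log p/n})$) and a cross-block Hessian term acting on the oracle error, are both how the paper argues. One imprecision there: the mean value theorem gives you the cross-block Hessian at an \emph{intermediate} point, while the $O(n)$ control in \ref{assumption:glm:min hessian} is assumed only at $\bvec^*$; the paper re-centers at $\bvec^*$ and absorbs the difference into a remainder $\bm R$ with $\norm{\bm R}_{\max} = O\left(\norm{\hat{\bvec}_{0,\Mset\cup\Sset}^{\oracle} - \bvec^*_{\Mset\cup\Sset}}_2^2\right)$, controlled by the third-derivative condition uniformly over $\mathcal{N}_0$. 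You clearly have this machinery in mind, but it must appear in the $\delta_{i1}$ argument, not only in the rate derivation.

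The genuine divergence, and the genuine gap, is the constrained oracle rate, which you yourself flag as "the hard part" and leave open. You propose carrying the Lagrange multiplier through the KKT system \ref{assumption:reduced oracle unique} and running a fixed-point argument that simultaneously controls the primal error and $\nvec$; this can be made to work (the paper does essentially this later, in the proof of Lemma \ref{lemma:oracle asymptotics}, where $\nvec$ is solved for explicitly), but it is not the paper's route here, and it is harder than necessary. The paper instead (i) replaces $\bvec^*$ by a shifted target $\tilde{\bvec}^*$ with $\tilde{\bvec}^*_{\Mset} = \bvec^*_{\Mset} - \Cmat'(\Cmat\Cmat')^{-1}\bm{h}_n$, which satisfies the constraint exactly and is within $O(\sqrt{(s+m-r)/n})$ of $\bvec^*$ by \ref{assumption:hn order}, and (ii) reparametrizes the constraint set through an orthonormal basis $\mathbf{Z}$ of the null space of $\Cmat$, so the constrained problem becomes an unconstrained one in dimension $s+m-r$ with no multiplier to track; a boundary argument plus convexity of $\ell_n$ and uniqueness then pin the global constrained minimizer inside the neighborhood. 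Two consequences of this device are worth noting against your sketch. First, the paper needs no explicit third-order Taylor remainder for the rate: it uses an exact second-order expansion with the Hessian at an intermediate point, bounded below on $\mathcal{N}_0$ by \ref{assumption:glm:min hessian}. Second, the hypothesis $(s+m) = o(n^{1/2})$ enters not where you place it (cubic versus linear term), but in comparing $\bm{\Sigma}(\xmat\tilde{\bvec}^*)$ with $\bm{\Sigma}(\xmat\bvec^*)$ when computing $\E\left[\norm{\nabla\bar{\ell}_n(\bm 0)}_2^2\right]$ --- your variance computation at $\bvec^*$ is only valid for the full model; for the reduced model the expansion must be centered at the constraint-feasible $\tilde{\bvec}^*$, and the third-derivative condition \ref{assumption:glm:third derivative bound} together with $(s+m) = o(n^{1/2})$ is what shows this recentering costs only $O((s+m-r)/n)$. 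Adopting the shifted target and null-space reparametrization would close the gap you identified and make your reduced-model argument run in parallel with the full-model one, as intended.
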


All that remains is to select good initial estimators so that $\delta_{00}$ and $\delta_{a0}$ also converge to $0$ as $n \to \infty$. 
Theorems \ref{thm:one step LLA} and \ref{thm:two step lla} do not require the initial estimators for the LLA algorithms to be partially penalized or to satisfy the linear constraints in \eqref{eqn:reduced estimator}.
As such, we use the $\ell_1$-penalized estimator, given by
\begin{equation}
	\hat{\bvec}^{\lasso} = \argmin_{\bvec} \ell_n(\bvec) + \lambda_{\lasso} \norm{\bvec}_1 ,\label{eqn:glm:lasso initial estimator}
\end{equation}
as the initial estimator for both LLA algorithms.

In order to bound $\norm{\hat{\bvec}^{\lasso} - \bvec^*}_2$, we assume that the following restricted eigenvalue (RE) condition holds: 
\begin{enumerate}[label=(A\arabic*), start=9, leftmargin=2\parindent]
	\item  $\kappa = \min_{\bm u \in \mathcal{C}} \frac{\bm{u}' \nabla^2 \ell_n(\bvec^*) \bm{u}}{\bm u'\bm u} \in (0, \infty)$, where $\mathcal{C} = \left\{\bm u \neq 0 : \norm{\bm u_{\mathcal{A}^c}}_1 \leq 3 \norm{\bm u_{\mathcal{A}}}_1 \right\}$.	\label{assumption:glm:RE condition} 
\end{enumerate}
\cite{Bickel2009} and \cite{Fan2014} impose equivalent RE conditions to derive estimation bounds for $\ell_1$-penalized least squares and logistic regression, respectively.
Condition \ref{assumption:glm:RE condition} effectively generalizes their RE conditions for other GLMs.
In addition, we assume that $b(\cdot)$ in the density function \eqref{assumption:glm} satisfies
\begin{enumerate}[label=(A\arabic*), start=10, leftmargin=2\parindent]
	\item $\exists_{K>0}$ such that $|b'''(t)| \leq K b''(t)$ for all $t$. \label{assumption:glm:self concordant}
\end{enumerate}
It is straightforward to show that \ref{assumption:glm:self concordant} holds for linear, logistic, and Poisson regression (see Appendix \ref{sec:technical conditions}).
In fact, \cite{Bach2010} and \cite{Fan2014} use this property to establish estimation bounds for $\ell_1$-penalized logistic regression.

The following theorem provides an estimation bound for the $\ell_1$-penalized estimator \eqref{eqn:glm:lasso initial estimator}:
\begin{thm}\label{thm:glm initial estimator}
	Define $c = \norm{\xmat}_{\max}$ and $\tilde s = |\mathcal{A}|$.
	Suppose that \emph{\ref{assumption:glm:design column rates}}--\emph{\ref{assumption:glm:self concordant}} hold and that
	$\lambda_{\lasso} \leq \frac{\kappa}{20 K c \tilde s}$. 
	Then 
	\begin{equation}
		\norm{\hat{\bvec}^{\lasso} - \bvec^*}_2 \leq \frac{5 \tilde s^{1/2} \lambda_{\lasso} }{\kappa} \label{eqn:glm:lasso rate}
	\end{equation}
	with probability at least 
	$1 - 2p\exp\left\{-\frac{1}{8} \frac{n \lambda_{\lasso}^2}{B_1 v_0 + \frac{1}{2}B_2M\sqrt{\frac{n}{\log p}} \lambda_{\lasso}}\right\}$. 
	Therefore, if $\lambda_{\lasso} = A \sqrt{\frac{\log p}{n}}$, where $A$ satisfies $A^2 > 16(B_1  v_0 + \frac{1}{2} A B_2 M)$, then \eqref{eqn:glm:lasso rate} holds with probability at least $1 - \frac{2}{p}$. 
\end{thm}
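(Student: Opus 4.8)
The plan is to decompose the claim into a deterministic estimation bound valid on a favorable ``gradient event'' and a concentration argument bounding the probability of that event. Write $\hat{\bm{\delta}} = \hat{\bvec}^{\lasso} - \bvec^*$ and work on $\Eset^{\lasso} = \{\norm{\nabla \ell_n(\bvec^*)}_{\max} \leq \lambda_{\lasso}/2\}$. First I would record the basic inequality: since $\hat{\bvec}^{\lasso}$ minimizes $F(\bvec) = \ell_n(\bvec) + \lambda_{\lasso}\norm{\bvec}_1$, we have $F(\hat{\bvec}^{\lasso}) \leq F(\bvec^*)$, so writing the Bregman divergence $D(\bm{\delta}) = \ell_n(\bvec^* + \bm{\delta}) - \ell_n(\bvec^*) - \nabla \ell_n(\bvec^*)'\bm{\delta} \geq 0$ and using $\norm{\nabla \ell_n(\bvec^*)}_{\max} \leq \lambda_{\lasso}/2$ together with $\bvec^*_{\mathcal{A}^c} = \bm{0}$ gives $0 \leq D(\hat{\bm{\delta}}) \leq \lambda_{\lasso}(\tfrac{3}{2}\norm{\hat{\bm{\delta}}_{\mathcal{A}}}_1 - \tfrac{1}{2}\norm{\hat{\bm{\delta}}_{\mathcal{A}^c}}_1)$. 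Nonnegativity of $D$ immediately forces $\norm{\hat{\bm{\delta}}_{\mathcal{A}^c}}_1 \leq 3\norm{\hat{\bm{\delta}}_{\mathcal{A}}}_1$, i.e.\ $\hat{\bm{\delta}} \in \mathcal{C}$, and we retain the upper bound $D(\hat{\bm{\delta}}) \leq \tfrac{3}{2}\lambda_{\lasso}\norm{\hat{\bm{\delta}}_{\mathcal{A}}}_1 \leq \tfrac{3}{2}\lambda_{\lasso}\tilde s^{1/2}\norm{\hat{\bm{\delta}}}_2$.

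Next I would establish a restricted strong convexity lower bound on $D$ through the self-concordance condition \ref{assumption:glm:self concordant}. Using the integral form of Taylor's theorem, $D(\bm{\delta}) = \int_0^1 (1-t)\,\bm{\delta}'\nabla^2 \ell_n(\bvec^* + t\bm{\delta})\,\bm{\delta}\,dt$, and $|b'''| \leq K b''$ implies, along each observation, $b''(\xvec_i'(\bvec^* + t\bm{\delta})) \geq b''(\xvec_i'\bvec^*)\exp(-Kt|\xvec_i'\bm{\delta}|)$. Hence $\bm{\delta}'\nabla^2 \ell_n(\bvec^* + t\bm{\delta})\bm{\delta} \geq e^{-Kt\eta}\,\bm{\delta}'\nabla^2 \ell_n(\bvec^*)\bm{\delta}$ where $\eta = \norm{\xmat\bm{\delta}}_{\infty}$, and when $\bm{\delta} \in \mathcal{C}$ the RE condition \ref{assumption:glm:RE condition} gives $\bm{\delta}'\nabla^2 \ell_n(\bvec^*)\bm{\delta} \geq \kappa\norm{\bm{\delta}}_2^2$. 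Carrying out the elementary integral $\int_0^1 (1-t)e^{-at}\,dt = (a - 1 + e^{-a})/a^2$ with $a = K\eta$, which is decreasing on $(0,1]$ with value $e^{-1}$ at $a=1$, yields $D(\bm{\delta}) \geq e^{-1}\kappa\norm{\bm{\delta}}_2^2$ whenever $K\eta \leq 1$. The magnitude $\eta$ is controlled by $\eta \leq c\norm{\bm{\delta}}_1 \leq 4c\tilde s^{1/2}\norm{\bm{\delta}}_2$, using $\norm{\xmat}_{\max} = c$ and the cone inequality $\norm{\bm{\delta}}_1 \leq 4\norm{\bm{\delta}_{\mathcal{A}}}_1 \leq 4\tilde s^{1/2}\norm{\bm{\delta}}_2$.

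To extract the explicit constant $5$ I would run a convexity localization. Set $R = 5\tilde s^{1/2}\lambda_{\lasso}/\kappa$ and consider $\bm{\delta} \in \mathcal{C}$ with $\norm{\bm{\delta}}_2 = R$. The assumption $\lambda_{\lasso} \leq \kappa/(20 K c \tilde s)$ forces $K\eta \leq 4Kc\tilde s^{1/2} R = 20Kc\tilde s\lambda_{\lasso}/\kappa \leq 1$, so the sharp lower bound applies. Combining the restricted strong convexity term with the gradient term $\nabla \ell_n(\bvec^*)'\bm{\delta} \geq -\tfrac{\lambda_{\lasso}}{2}\norm{\bm{\delta}}_1$ and the $\ell_1$-penalty increment $\lambda_{\lasso}(\norm{\bvec^* + \bm{\delta}}_1 - \norm{\bvec^*}_1) \geq \lambda_{\lasso}(\norm{\bm{\delta}_{\mathcal{A}^c}}_1 - \norm{\bm{\delta}_{\mathcal{A}}}_1)$ gives, for $G(\bm{\delta}) = F(\bvec^* + \bm{\delta}) - F(\bvec^*)$, the bound $G(\bm{\delta}) \geq e^{-1}\kappa R^2 - \tfrac{3}{2}\lambda_{\lasso}\tilde s^{1/2} R = \tfrac{\tilde s \lambda_{\lasso}^2}{\kappa}\big(\tfrac{25}{e} - \tfrac{15}{2}\big) > 0$. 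Since $G$ is convex with $G(\bm{0}) = 0$ and $G(\hat{\bm{\delta}}) \leq 0$, and $\mathcal{C}$ is scale invariant, strict positivity of $G$ on the sphere of radius $R$ inside $\mathcal{C}$ precludes $\norm{\hat{\bm{\delta}}}_2 \geq R$; thus $\norm{\hat{\bm{\delta}}}_2 < R$, which is exactly \eqref{eqn:glm:lasso rate}.

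Finally I would control $\Pr(\Eset^{\lasso})$. Each coordinate $\nabla_j \ell_n(\bvec^*) = -\tfrac{1}{n}\sum_i x_{ij}(y_i - b'(\xvec_i'\bvec^*))$ is a sum of independent mean-zero terms whose moment generating behavior is exactly what condition \ref{assumption:glm:expectation bound for chernoff} controls, so Bernstein's inequality with $\tfrac{1}{n}\sum_i x_{ij}^2 \leq B_1$ and $\max_i |x_{ij}| \leq B_2\sqrt{n/\log p}$ at level $\lambda_{\lasso}/2$ gives the per-coordinate tail $2\exp\{-\tfrac{1}{8} n\lambda_{\lasso}^2 / (B_1 v_0 + \tfrac{1}{2}B_2 M \sqrt{n/\log p}\,\lambda_{\lasso})\}$; a union bound over the $p$ coordinates produces the stated probability. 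Substituting $\lambda_{\lasso} = A\sqrt{\log p / n}$ turns the exponent into $-A^2 \log p / [8(B_1 v_0 + \tfrac{1}{2}A B_2 M)]$, which is below $-2\log p$ under $A^2 > 16(B_1 v_0 + \tfrac{1}{2}A B_2 M)$, so $2p \exp(\cdot) < 2/p$. The main obstacle is the exponential degradation of the Hessian introduced by self-concordance: obtaining the explicit constant $5$ (rather than a looser $3e$ from a naive implicit inequality) hinges on using the sharp integral constant $e^{-1}$ and on the localization radius being compatible with $K\eta \leq 1$, which is precisely where the hypothesis $\lambda_{\lasso} \leq \kappa/(20Kc\tilde s)$ enters.
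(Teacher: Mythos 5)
Your proposal is correct and follows essentially the same route as the paper's proof: the same decomposition into a deterministic bound on the gradient event $\left\{ \norm{\nabla \ell_n(\bvec^*)}_{\max} \leq \lambda_{\lasso}/2 \right\}$ plus a concentration bound, the same cone argument, the same self-concordance-based restricted strong convexity, the same localization at radius $5\tilde s^{1/2}\lambda_{\lasso}/\kappa$, and the same Bernstein-plus-union-bound tail calculation. The only differences are ones of self-containment rather than substance: you re-derive inline what the paper cites, namely Proposition 1 of Bach (2010) (your integral Taylor expansion with the pointwise bound on $\log b''$, where you use the exact value $h(1) = e^{-1}$ in place of the paper's $h(1) > 1/3$, so your positivity margin is $25/e - 15/2$ instead of $25/3 - 15/2$) and Lemma 4 of Negahban et al.\ (2012) (your convexity-plus-cone localization), and you invoke Bernstein's inequality directly where the paper cites Proposition 4 of Fan and Lv (2011).
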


\begin{remark}
	\cite{Bickel2009}, \cite{Negahban2012}, and \cite{Fan2014} establish similar estimation bounds for $\ell_1$-penalized linear regression and logistic regression. 
	Theorem \ref{thm:glm initial estimator} extends their results to cover all GLMs satisfying \ref{assumption:glm:expectation bound for chernoff}-\ref{assumption:glm:self concordant}.
\end{remark}

Using Theorems \ref{thm:glm lla covergence prob} and \ref{thm:glm initial estimator}, we can show that the exception probabilities from Corollaries \ref{cor:one step LLA prob} and \ref{cor:two step LLA prob} converge to $0$ as $n \to \infty$ for the LLA algorithms initialized by the $\ell_1$-penalized estimator \eqref{eqn:glm:lasso initial estimator}.
Theorem \ref{thm:glm lla covergence prob} guarantees that $\delta_{i1}^{\text{GLM}} \to 0$, and $\delta_{i2}^{\text{GLM}} \to 0$ as $n \to \infty$ for $i = 0,a$.
Define $$\delta_0^{\lasso} = \Pr \left( \norm{\hat{\bvec}_{\Mset^c}^{\lasso} - \bvec_{\Mset^c}^*}_{\max} > a_0 \lambda \right).$$
As an immediate consequence of Theorem \ref{thm:glm initial estimator}, $\delta_0^{\lasso} \to 0$ as $n \to \infty$ if $\lambda \geq \frac{5 \tilde{s}^{1/2} \lambda_{\lasso}}{a_0 \kappa}$. 
As such, if we use $\hat{\bvec}^{\lasso}$ as our initial estimator, $\lambda$ is sufficiently large, and the conditions of Theorems \ref{thm:glm lla covergence prob} and \ref{thm:glm initial estimator} are satisfied, then the LLA algorithms for the reduced and full model estimators will converge to $\hat{\bvec}_0^{\oracle}$ and $\hat{\bvec}_a^{\oracle}$ in two steps with probability converging to $1$ as $n\to \infty$.
We summarize this result for the LLA algorithms initialized by $\hat{\bvec}^{\lasso}$ with the following corollary:
\begin{cor}\label{cor:glm lla convergence prob with lasso}
	Suppose that \emph{\ref{assumption:reduced oracle unique}}--\emph{\ref{assumption:glm:self concordant}} hold
	and that $(s+m) = o(n^{1/2})$. If $\lambda \geq \frac{5 \tilde{s}^{1/2} \lambda_{\lasso}}{a_0 \kappa}$ and $\lambda_{\lasso}$ satisfies the conditions in Theorem \ref{thm:glm initial estimator}, 
	then the LLA algorithm for the reduced model estimator \eqref{eqn:Shi reduced} initialized by $\hat{\bvec}^{\lasso}$ converges to $\hat{\bvec}_0^{\oracle}$ in two steps with probability at least $1 - \delta_{0}^{\lasso} - \delta_{01}^{\text{GLM}} - \delta_{02}^{\text{GLM}}$, where $\delta_0^{\lasso} \to 0$, $\delta_{01}^{\text{GLM}} \to 0$, and $\delta_{02}^{\text{GLM}} \to 0$ as $n \to \infty$.
	
	Likewise, the LLA algorithm for the full model estimator \eqref{eqn:Shi full} initialized by $\hat{\bvec}^{\lasso}$ converges to $\hat{\bvec}_a^{\oracle}$ in two steps with probability at least $1 - \delta_{0}^{\lasso} - \delta_{a1}^{\text{GLM}} - \delta_{a2}^{\text{GLM}}$, where $\delta_0^{\lasso} \to 0$,  $\delta_{a1}^{\text{GLM}} \to 0$, and $\delta_{a2}^{\text{GLM}} \to 0$ as $n \to \infty$.
\end{cor}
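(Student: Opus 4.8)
The plan is to assemble this corollary from three already-established results, since it is precisely the GLM-specific instantiation of the general two-step convergence bound with the $\ell_1$-penalized estimator as initializer. First I would observe that the GLM loss \eqref{eqn:glm loss} is convex---because $b$ is a cumulant function, the Hessian $\nabla^2 \ell_n(\bvec) = \frac{1}{n}\xmat'\bm\Sigma(\xmat\bvec)\xmat$ is positive semidefinite---and that conditions \ref{assumption:reduced oracle unique}--\ref{assumption: min signal for betaS}, together with the folded-concave penalty conditions, are all among the present hypotheses. Hence Corollary \ref{cor:two step LLA prob} applies directly: the reduced-model LLA converges to $\hat{\bvec}_0^{\oracle}$ in two steps with probability at least $1 - \delta_{00} - \delta_{01} - \delta_{02}$, and the full-model LLA converges to $\hat{\bvec}_a^{\oracle}$ with probability at least $1 - \delta_{a0} - \delta_{a1} - \delta_{a2}$. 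It then remains only to control the six exception probabilities.

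Second, I would dispatch the four oracle-dependent probabilities. For the GLM loss the defining events coincide once $\ell_n$ is specialized, so $\delta_{i1} = \delta_{i1}^{\text{GLM}}$ and $\delta_{i2} = \delta_{i2}^{\text{GLM}}$ for $i = 0,a$. The hypotheses of Theorem \ref{thm:glm lla covergence prob}---conditions \ref{assumption:reduced oracle unique}--\ref{assumption:glm:expectation bound for chernoff} together with $(s+m) = o(n^{1/2})$---are all included in the present hypotheses, so that theorem yields $\delta_{01}^{\text{GLM}}, \delta_{02}^{\text{GLM}}, \delta_{a1}^{\text{GLM}}, \delta_{a2}^{\text{GLM}} \to 0$ as $n \to \infty$.

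Third, I would handle the two initialization probabilities, which is the one step requiring genuine manipulation. Because both algorithms are initialized at $\hat{\bvec}^{\lasso}$, we have $\delta_{00} = \delta_{a0} = \delta_0^{\lasso}$. The key move is to convert the $\ell_2$ estimation bound of Theorem \ref{thm:glm initial estimator} into the entrywise bound appearing in $\delta_0^{\lasso}$: using $\norm{\hat{\bvec}_{\Mset^c}^{\lasso} - \bvec_{\Mset^c}^*}_{\max} \leq \norm{\hat{\bvec}^{\lasso} - \bvec^*}_2$, on the event that \eqref{eqn:glm:lasso rate} holds we obtain $\norm{\hat{\bvec}_{\Mset^c}^{\lasso} - \bvec_{\Mset^c}^*}_{\max} \leq 5\tilde s^{1/2}\lambda_{\lasso}/\kappa \leq a_0\lambda$, where the final inequality is exactly the assumed threshold $\lambda \geq 5\tilde s^{1/2}\lambda_{\lasso}/(a_0\kappa)$. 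Thus $\delta_0^{\lasso}$ is bounded above by the failure probability of \eqref{eqn:glm:lasso rate}; since $\lambda_{\lasso}$ satisfies the conditions of Theorem \ref{thm:glm initial estimator} (whose hypotheses \ref{assumption:glm:design column rates}--\ref{assumption:glm:self concordant} are assumed here), that failure probability is at most $2/p$, which tends to $0$ as $p$ diverges in the ultra-high-dimensional regime $\log p = O(n^\eta)$. Combining the three steps gives the two stated bounds with every exception term vanishing.

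I do not expect a genuine obstacle, since all the analytic work lives in the cited theorems and the present argument is essentially bookkeeping plus the norm inequality above. The only point requiring care is the mutual compatibility of the rate conditions on $\lambda$ and $\lambda_{\lasso}$: the threshold $\lambda \geq 5\tilde s^{1/2}\lambda_{\lasso}/(a_0\kappa)$ needed here, condition \ref{assumption:rate of lambda} needed by Theorem \ref{thm:glm lla covergence prob}, and the lasso rate $\lambda_{\lasso} = A\sqrt{\log p/n}$ needed by Theorem \ref{thm:glm initial estimator} must hold simultaneously. Because these are imposed directly as hypotheses of the corollary (and are shown elsewhere to be jointly satisfiable for an appropriate choice of $A$), there is nothing further to verify, and the conclusion follows.
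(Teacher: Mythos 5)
Your proposal is correct and follows essentially the same route as the paper: invoke Corollary \ref{cor:two step LLA prob} for the two-step convergence bound, use Theorem \ref{thm:glm lla covergence prob} to kill the four oracle-dependent exception probabilities, and use Theorem \ref{thm:glm initial estimator} together with the inequality $\norm{\hat{\bvec}_{\Mset^c}^{\lasso} - \bvec_{\Mset^c}^*}_{\max} \leq \norm{\hat{\bvec}^{\lasso} - \bvec^*}_2 \leq 5\tilde s^{1/2}\lambda_{\lasso}/\kappa \leq a_0\lambda$ to show $\delta_0^{\lasso} \leq 2/p \to 0$. Your added attention to the joint satisfiability of the rate conditions on $\lambda$ and $\lambda_{\lasso}$ is sound bookkeeping that the paper leaves implicit.
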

Corollary \ref{cor:glm lla convergence prob with lasso} provides that if we use $\hat{\bvec}^{\lasso}$ as our initial estimator, then $\Pr( \hat{\bvec}^{(2)}_0 = \hat{\bvec}^{\oracle}_0 ) \to 1$ and $\Pr( \hat{\bvec}^{(2)}_a = \hat{\bvec}^{\oracle}_a ) \to 1$ as $n \to \infty$, meaning that $\hat{\bvec}^{(2)}_0$ and $\hat{\bvec}^{(2)}_a$---the two-step LLA solutions to the partial penalized estimation problems for GLMs---possess the strong oracle property with respect to $\hat{\bvec}^{\oracle}_0$ and $\hat{\bvec}^{\oracle}_a$.
This key result lays the groundwork for the next stage of our theoretical study.

\section{Asymptotic Results for Oracle and LLA Solutions} \label{sec:estimator asymptotics}
In this section, we examine the asymptotic behavior of the two-step LLA solutions for partial penalized GLMs. 
In particular, we aim to derive approximate distributions for the partial penalized Wald, score, and likelihood ratio tests evaluated at the two-step LLA solutions.

As consequence of the strong oracle property (Corollary \ref{cor:glm lla convergence prob with lasso}), one can show that the asymptotic properties of $\hat{\bvec}^{\oracle}_0$ and $\hat{\bvec}^{\oracle}_a$ also hold for $\hat{\bvec}_0^{(2)}$ and $\hat{\bvec}_a^{(2)}$. 
Our theoretical strategy, therefore, is to prove limiting results for $\hat{\bvec}^{\oracle}_0$ and $\hat{\bvec}^{\oracle}_a$, then extend them to $\hat{\bvec}_0^{(2)}$ and $\hat{\bvec}_a^{(2)}$ using the strong oracle property.

\subsection{Asymptotic results for the oracle estimators}
If $s$, $m$, and $r$ were fixed, then we could simply apply classical results for GLMs to establish the limiting behavior of $\hat{\bvec}^{\oracle}_0$ and $\hat{\bvec}^{\oracle}_a$.
However, we allow $s$, $m$, and $r$ to diverge with $n$ and, therefore, must prove new results for this regime.

We begin by deriving rates of convergence and limiting expressions for $\hat{\bvec}^{\oracle}_0$ and $\hat{\bvec}^{\oracle}_a$.
Define $\bm{K}_n = \nabla_{\Mset \cup \Sset}^2 \ell_n(\bvec^*)$ and $\bm{\Psi} =  \begin{bsmallmatrix} \Cmat & \bm 0 \end{bsmallmatrix} \bm K_n^{-1} \begin{bsmallmatrix} \Cmat' \\ \bm 0'\end{bsmallmatrix}$.
\begin{lemma}\label{lemma:oracle asymptotics}
	Suppose that \emph{\ref{assumption:reduced oracle unique}}, \emph{\ref{assumption:full oracle unique}}, \emph{\ref{assumption:hn order}} and \emph{\ref{assumption:glm:min hessian}} - \emph{\ref{assumption:glm:expectation bound for chernoff}} hold.
	If $s+m = o(n^{1/2})$, then 
	\begin{equation}
		\norm{ \hat{\bvec}^{\oracle}_{0, \Mset \cup \Sset}  - \bvec^*_{\Mset \cup \Sset} }_2 = O_p\left( \sqrt{ \frac{s+m-r}{n} }\right) \label{eqn:reduced oracle consistency}
	\end{equation}
	and
	\begin{equation}
		\norm{ \hat{\bvec}^{\oracle}_{a, \Mset \cup \Sset}  - \bvec^*_{\Mset \cup \Sset} }_2 = O_p\left( \sqrt{ \frac{s+m}{n} }\right). \label{eqn:full oracle consistency}
	\end{equation}
	Furthermore, if $s+m = o(n^{1/3})$, then
	\begin{align}
		\sqrt{n}(\hat{\bvec}^{\oracle}_{0, \Mset \cup \Sset} - \bvec_{\Mset \cup \Sset}^* )
		= & \sqrt{n} \bm{K}_n^{-1/2} (\bm P_n - \bm I) \bm {K}_n^{-1/2} \nabla_{\Mset \cup \Sset} \ell_n(\bvec^*) \notag \\
		& -  \sqrt{n} \bm K_n^{-1/2} \bm P_n \bm K_n^{1/2} \begin{bmatrix} \Cmat'(\Cmat \Cmat')^{-1} \bm h_n \\ \bm 0 \end{bmatrix}
		+ o_p(1), \label{eqn:reduced oracle limiting expression}
	\end{align}
	where $\bm P_n = \bm K_n^{-1/2} \begin{bsmallmatrix} \Cmat' \\ \bm 0' \end{bsmallmatrix} \bm \Psi^{-1} \begin{bsmallmatrix} \Cmat & \bm 0 \end{bsmallmatrix} \bm K_n^{-1/2}$,
	and
	\begin{equation}
		\sqrt{n}(\hat{\bvec}^{\oracle}_{a, \Mset \cup \Sset} - \bvec_{\Mset \cup \Sset}^* ) = -\sqrt{n} \bm K_n^{-1} \nabla_{\Mset \cup \Sset} \ell_n(\bvec^*) + o_p(1). \label{eqn:full oracle limiting expression}
	\end{equation}
\end{lemma}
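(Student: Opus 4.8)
The plan is to first establish the consistency rates \eqref{eqn:reduced oracle consistency} and \eqref{eqn:full oracle consistency}, which confine both oracle estimators to the neighborhood $\mathcal{N}_0$ on which the eigenvalue and third-derivative bounds in \ref{assumption:glm:min hessian} are available, and then to refine these into the limiting expressions \eqref{eqn:reduced oracle limiting expression} and \eqref{eqn:full oracle limiting expression} through a controlled first-order expansion. Since both oracle estimators are supported on $\Mset \cup \Sset$, I would work throughout in the $(s+m)$-dimensional subproblem indexed by $\Mset \cup \Sset$. The recurring building block is a bound on the score $\bm g \defeq \nabla_{\Mset \cup \Sset}\ell_n(\bvec^*) = -\tfrac{1}{n}\xmat_{\Mset \cup \Sset}'(\yvec - \bm\mu(\xmat\bvec^*))$, which has mean zero and covariance $\tfrac{\phi^*}{n}\bm K_n$; a direct second-moment computation then gives $\E\norm{\bm g}_2^2 = \tfrac{\phi^*}{n}\tr(\bm K_n) = O((s+m)/n)$ using $\lambda_{\max}\{\bm K_n\} = O(1)$ from \ref{assumption:glm:max hessian}, so that $\norm{\bm g}_2 = O_p(\sqrt{(s+m)/n})$ by Markov's inequality.

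For the full-oracle rate \eqref{eqn:full oracle consistency} I would apply the standard convexity argument. Writing $\bm u = \bvec_{\Mset \cup \Sset} - \bvec^*_{\Mset \cup \Sset}$ and expanding $\ell_n(\bvec^* + \bm u) - \ell_n(\bvec^*) = \bm g'\bm u + \tfrac{1}{2}\bm u'\nabla^2_{\Mset \cup \Sset}\ell_n(\tilde\bvec)\bm u$, on the sphere $\norm{\bm u}_2 = C\sqrt{(s+m)/n}$ the quadratic term is at least $\tfrac{c}{2}\norm{\bm u}_2^2$ by the minimum-eigenvalue bound in \ref{assumption:glm:min hessian} (valid since such $\bm u$ keep $\tilde\bvec \in \mathcal{N}_0$), while the linear term is bounded by $\norm{\bm g}_2\norm{\bm u}_2$; for $C$ large the difference is positive with probability tending to one, so convexity forces the minimizer inside the ball. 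For the reduced oracle I would reparametrize the feasible set as $\bm u = \bm u_0 + \bm V\bm z$, where $\bm u_0 = -\begin{bsmallmatrix}\Cmat'(\Cmat\Cmat')^{-1}\bm h_n \\ \bm 0\end{bsmallmatrix}$ solves $\begin{bsmallmatrix}\Cmat & \bm 0\end{bsmallmatrix}\bm u = -\bm h_n$ and the columns of $\bm V$ form an orthonormal basis for the $(s+m-r)$-dimensional null space of $\begin{bsmallmatrix}\Cmat & \bm 0\end{bsmallmatrix}$. Running the same argument over $\bm z$—whose restricted Hessian $\bm V'\nabla^2_{\Mset \cup \Sset}\ell_n(\tilde\bvec)\bm V$ inherits the lower bound $c$ from \ref{assumption:glm:min hessian}—the effective score $\bm V'\bm g$ now satisfies $\E\norm{\bm V'\bm g}_2^2 = \tfrac{\phi^*}{n}\tr(\bm V'\bm K_n\bm V) = O((s+m-r)/n)$, and the offset $\norm{\bm u_0}_2 = O(\norm{\bm h_n}_2)$ is of the same order by \ref{assumption:hn order}; together these give \eqref{eqn:reduced oracle consistency}.

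To obtain the full-oracle expansion \eqref{eqn:full oracle limiting expression}, the stationarity condition \ref{assumption:full oracle unique}, $\nabla_{\Mset \cup \Sset}\ell_n(\hat\bvec_a^{\oracle}) = \bm 0$, together with a mean-value Taylor expansion yields $\hat\bvec^{\oracle}_{a,\Mset\cup\Sset} - \bvec^*_{\Mset\cup\Sset} = -[\nabla^2_{\Mset\cup\Sset}\ell_n(\tilde\bvec)]^{-1}\bm g$. I would then swap the mean-value Hessian for $\bm K_n$ via $\bm A^{-1} - \bm B^{-1} = \bm A^{-1}(\bm B - \bm A)\bm B^{-1}$. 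The crucial estimate is $\norm{\nabla^2_{\Mset\cup\Sset}\ell_n(\tilde\bvec) - \bm K_n}_2 = O_p((s+m)/\sqrt{n})$: expanding the entrywise Hessian difference through a third-derivative mean-value form and bounding the resulting quadratic form coordinate by coordinate against the $O(n)$ bound in \ref{assumption:glm:third derivative bound} contributes a factor $\norm{\tilde\bvec - \bvec^*}_1 \leq \sqrt{s+m}\,O_p(\sqrt{(s+m)/n}) = O_p((s+m)/\sqrt{n})$. Combining this with $\norm{\bm K_n^{-1}}_2 = O(1)$ and $\norm{\bm g}_2 = O_p(\sqrt{(s+m)/n})$, the Hessian-replacement error scaled by $\sqrt n$ is $O_p((s+m)^{3/2}/\sqrt{n})$, which is $o_p(1)$ exactly under $s+m = o(n^{1/3})$.

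For the reduced oracle, the KKT system \ref{assumption:reduced oracle unique} supplies $\nabla_{\Mset\cup\Sset}\ell_n(\hat\bvec_0^{\oracle}) = \begin{bsmallmatrix}\Cmat' \\ \bm 0\end{bsmallmatrix}\nvec$ together with $\Cmat\hat\bvec^{\oracle}_{0,\Mset} = \bm t$. Taylor expanding the stationarity equation as in the full-oracle case gives $\bm K_n(\hat\bvec^{\oracle}_{0,\Mset\cup\Sset} - \bvec^*_{\Mset\cup\Sset}) = \begin{bsmallmatrix}\Cmat' \\ \bm 0\end{bsmallmatrix}\nvec - \bm g + o_p(n^{-1/2})$, while the constraint reads $\begin{bsmallmatrix}\Cmat & \bm 0\end{bsmallmatrix}(\hat\bvec^{\oracle}_{0,\Mset\cup\Sset} - \bvec^*_{\Mset\cup\Sset}) = -\bm h_n$. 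Solving this linear system—inverting $\bm K_n$, enforcing the constraint to obtain $\nvec = \bm\Psi^{-1}(\begin{bsmallmatrix}\Cmat & \bm 0\end{bsmallmatrix}\bm K_n^{-1}\bm g - \bm h_n)$, and substituting back—reproduces \eqref{eqn:reduced oracle limiting expression} once one recognizes $\bm K_n^{-1}\begin{bsmallmatrix}\Cmat' \\ \bm 0\end{bsmallmatrix}\bm\Psi^{-1}\begin{bsmallmatrix}\Cmat & \bm 0\end{bsmallmatrix}\bm K_n^{-1} = \bm K_n^{-1/2}\bm P_n\bm K_n^{-1/2}$ and uses $\begin{bsmallmatrix}\Cmat & \bm 0\end{bsmallmatrix}\begin{bsmallmatrix}\Cmat'(\Cmat\Cmat')^{-1}\bm h_n \\ \bm 0\end{bsmallmatrix} = \bm h_n$ to cast the $\bm h_n$ contribution into the stated form; the remainder is controlled exactly as before. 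The main obstacle in both expansions is precisely this remainder control: everything hinges on showing the Hessian-replacement error is negligible after $\sqrt n$-scaling, which is where the sharp dimension restriction $s+m = o(n^{1/3})$ and the third-derivative bound \ref{assumption:glm:third derivative bound} are consumed.
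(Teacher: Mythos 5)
Your proposal is correct and follows essentially the same route as the paper's proof: the consistency rates come from the same null-space reparametrization of the linear constraint combined with a convexity/Taylor argument using the minimum-eigenvalue bound and a trace-plus-Markov bound on the score, and the limiting expressions come from the same KKT-plus-Taylor expansion, solving for the Lagrange multiplier via the constraint $\begin{bsmallmatrix} \Cmat & \bm 0 \end{bsmallmatrix}(\hat{\bvec}^{\oracle}_{0,\Mset\cup\Sset}-\bvec^*_{\Mset\cup\Sset}) = -\bm h_n$ and controlling the Hessian-replacement remainder, which is $o_p(1)$ after $\sqrt{n}$-scaling exactly when $s+m = o(n^{1/3})$. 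The only cosmetic differences---evaluating the score at $\bvec^*$ with a separate offset term rather than at the shifted point $\tilde{\bvec}^*$ as the paper does, and swapping the mean-value Hessian for $\bm K_n$ via the resolvent identity rather than isolating a remainder vector $\bm R$ bounded through the componentwise mean value theorem---do not change the substance of the argument.
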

Using the properties established in Lemma \ref{lemma:oracle asymptotics}, we derive approximate distributions for the Wald, score, and likelihood ratio test statistics evaluated at $\hat{\bvec}^{\oracle}_0$ and $\hat{\bvec}^{\oracle}_a$:
\begin{lemma} \label{lemma:oracle test statistic distribution}
	Suppose that \emph{\ref{assumption:reduced oracle unique}}, \emph{\ref{assumption:full oracle unique}}, \emph{\ref{assumption:hn order}} and \emph{\ref{assumption:glm:min hessian}} - \emph{\ref{assumption:glm:expectation bound for chernoff}} hold.
	In addition, assume
	\begin{enumerate}[label=(A\arabic*), start=11, leftmargin=2\parindent]
		\item $\frac{r^{1/4}}{n^{3/2}} \sum_{i=1}^n \left( \xmat_{i, \Mset \cup \Sset} \bm K_n^{-1} \xmat_{i, \Mset \cup \Sset}' \right)^{3/2} \to 0$ \label{assumption:lyapunov condition}.
	\end{enumerate}
	Under these conditions, if $s+m = o(n^{1/3})$ and $|\hat{\phi} - \phi^*| = o_p(1)$,	
	then for $T = T_W(\hat{\bvec}_a^{\oracle})$, $T_S(\hat{\bvec}_0^{\oracle})$, and $T_L(\hat{\bvec}_a^{\oracle},\hat{\bvec}_0^{\oracle})$,  
	\begin{equation}
		\sup_x \left| \Pr(T \leq x) - \Pr(\chi^2(r, \nu_n) \leq x) \right| \to 0 \label{eqn:oracle test statistic convergence}
	\end{equation}
	as $n \to \infty$, where $\chi^2(r, \nu_n)$ is a noncentral chi-square random variable with $r$ degrees of freedom and noncentrality parameter
	$ \nu_n = n \bm{h}_n ' {\bm{\Psi}}^{-1}\bm{h}_n/\phi^*$.
\end{lemma}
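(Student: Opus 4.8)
The three statistics are all evaluated at the oracle estimators, so the plan is to reduce each of $T_W(\hat{\bvec}_a^{\oracle})$, $T_S(\hat{\bvec}_0^{\oracle})$ and $T_L(\hat{\bvec}_a^{\oracle},\hat{\bvec}_0^{\oracle})$ to one common quadratic form and then prove a single chi-square limit for that form. As preliminaries, note that $\hat{\bvec}_0^{\oracle}$ and $\hat{\bvec}_a^{\oracle}$ vanish off $\Mset\cup\Sset$ by construction and are consistent for $\bvec^*_{\Mset\cup\Sset}$ by Lemma~\ref{lemma:oracle asymptotics}; since every coordinate of $\bvec^*_{\Sset}$ is nonzero ($\Sset\subseteq\Aset$), their estimated support sets satisfy $\hat{\Sset}(\hat{\bvec}_0^{\oracle}) = \hat{\Sset}(\hat{\bvec}_a^{\oracle}) = \Sset$ with probability tending to one, so that $\widehat{\bm K}_n(\hat{\bvec}_i^{\oracle}) = \nabla^2_{\Mset\cup\Sset}\ell_n(\hat{\bvec}_i^{\oracle})$ on that event. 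I would first show this plug-in Hessian is consistent in the sense $\bm K_n^{-1/2}\widehat{\bm K}_n(\hat{\bvec}_i^{\oracle})\bm K_n^{-1/2} = \bm I + o_p(1)$ in operator norm, using the third-derivative control~\ref{assumption:glm:third derivative bound} to bound $\shortnorm{\nabla^2_{\Mset\cup\Sset}\ell_n(\bvec) - \bm K_n}_2$ over $\mathcal N_0$ together with the rate $\shortnorm{\hat{\bvec}^{\oracle}_{i,\Mset\cup\Sset} - \bvec^*_{\Mset\cup\Sset}}_2 = O_p(\sqrt{(s+m)/n})$ and $s+m = o(n^{1/3})$. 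Combined with $\hat\phi = \phi^* + o_p(1)$, this lets me replace $\widehat{\bm K}_n$ by $\bm K_n$ and $\hat\phi$ by $\phi^*$ everywhere up to negligible factors.

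Write $\bm A = \begin{bsmallmatrix}\Cmat & \bm 0\end{bsmallmatrix}\bm K_n^{-1/2}$, so $\bm\Psi = \bm A\bm A'$ and $\bm P_n = \bm A'\bm\Psi^{-1}\bm A$ is the rank-$r$ orthogonal projection onto the row space of $\bm A$, and set $\bm g = \sqrt{n}\,\bm K_n^{-1/2}\nabla_{\Mset\cup\Sset}\ell_n(\bvec^*)$ and $\bm d = \sqrt{n}\,\bm K_n^{1/2}\begin{bsmallmatrix}\Cmat'(\Cmat\Cmat')^{-1}\bm h_n \\ \bm 0\end{bsmallmatrix}$. Substituting the limiting expressions \eqref{eqn:reduced oracle limiting expression} and \eqref{eqn:full oracle limiting expression} from Lemma~\ref{lemma:oracle asymptotics}, and using $\Cmat\bvec^*_{\Mset} - \bm t = \bm h_n$ together with $\bm A\bm d = \sqrt{n}\,\bm h_n$, short computations give
$$\sqrt{n}(\Cmat\hat{\bvec}^{\oracle}_{a,\Mset} - \bm t) = -\bm A(\bm g - \bm d) + o_p(1), \qquad \sqrt{n}\,\bm K_n^{1/2}(\hat{\bvec}^{\oracle}_{0,\Mset\cup\Sset} - \hat{\bvec}^{\oracle}_{a,\Mset\cup\Sset}) = \bm P_n(\bm g - \bm d) + o_p(1),$$
and, via a first-order expansion of the score about $\bvec^*$, $\nabla_{\Mset\cup\Sset}\ell_n(\hat{\bvec}^{\oracle}_0) = n^{-1/2}\bm K_n^{1/2}\bm P_n(\bm g - \bm d) + o_p(n^{-1/2})$. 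Feeding these into the definitions \eqref{eqn:Wald stat}--\eqref{eqn:lrt stat}, together with the preliminary replacements and the identities $(\Cmat(n\widehat{\bm K}_n)^{-1}_{\Mset,\Mset}\Cmat')^{-1} = n\bm\Psi^{-1}(1 + o_p(1))$ and $\bm P_n^2 = \bm P_n$, I claim each statistic collapses to the same quadratic form
$$W_n = (\bm g - \bm d)'\bm P_n(\bm g - \bm d)/\phi^*.$$
For $T_L$ this additionally requires a second-order Taylor expansion of $\ell_n(\hat{\bvec}^{\oracle}_0) - \ell_n(\hat{\bvec}^{\oracle}_a)$ about $\hat{\bvec}^{\oracle}_a$, whose gradient vanishes by~\ref{assumption:full oracle unique}, giving $T_L = n(\hat{\bvec}^{\oracle}_0 - \hat{\bvec}^{\oracle}_a)'_{\Mset\cup\Sset}\bm K_n(\hat{\bvec}^{\oracle}_0 - \hat{\bvec}^{\oracle}_a)_{\Mset\cup\Sset}/\phi^* + o_p(1)$ before substituting the displayed difference.

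Let $\bm U$ be an orthonormal basis for the range of $\bm P_n$, so $\bm P_n = \bm U\bm U'$ and $W_n = \shortnorm{\bm U'(\bm g - \bm d)}_2^2/\phi^*$. The stochastic part $\bm U'\bm g = -n^{-1/2}\sum_{i=1}^n \bm U'\bm K_n^{-1/2}\xmat_{i,\Mset\cup\Sset}(y_i - b'(\xvec_i'\bvec^*))$ is a sum of independent mean-zero $\reals^r$-valued vectors whose covariance is exactly $\phi^*\bm I_r$, since $\Cov(y_i) = \phi^* b''(\xvec_i'\bvec^*)$ and $\bm K_n = n^{-1}\xmat'_{\Mset\cup\Sset}\bm\Sigma(\xmat\bvec^*)\xmat_{\Mset\cup\Sset}$. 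The deterministic part satisfies $\shortnorm{\bm U'\bm d}_2^2 = \bm d'\bm P_n\bm d = n\,\bm h_n'\bm\Psi^{-1}\bm h_n$, because $\bm K_n^{1/2}\bm P_n\bm K_n^{1/2} = \begin{bsmallmatrix}\Cmat' \\ \bm 0\end{bsmallmatrix}\bm\Psi^{-1}\begin{bsmallmatrix}\Cmat & \bm 0\end{bsmallmatrix}$ and $\begin{bsmallmatrix}\Cmat & \bm 0\end{bsmallmatrix}\begin{bsmallmatrix}\Cmat'(\Cmat\Cmat')^{-1}\bm h_n \\ \bm 0\end{bsmallmatrix} = \bm h_n$; dividing by $\phi^*$ reproduces exactly the stated $\nu_n = n\bm h_n'\bm\Psi^{-1}\bm h_n/\phi^*$. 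Thus, writing $\bm W = \bm U'\bm g/\sqrt{\phi^*}$ (with $\Cov(\bm W) = \bm I_r$) and $\bm b_n = \bm U'\bm d/\sqrt{\phi^*}$ (with $\shortnorm{\bm b_n}_2^2 = \nu_n$), it remains to show that $W_n = \shortnorm{\bm W - \bm b_n}_2^2$ satisfies \eqref{eqn:oracle test statistic convergence}.

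The genuine difficulty is this last step: a central limit theorem for a quadratic form whose dimension $r$ may diverge with $n$. When $r \to \infty$ both $W_n$ and $\chi^2(r,\nu_n)$ diverge, so convergence must be established in Kolmogorov distance rather than in distribution, as the statement requires. I would control $\sup_x\lvert \Pr(\shortnorm{\bm W - \bm b_n}_2^2 \leq x) - \Pr(\shortnorm{\bm Z - \bm b_n}_2^2 \leq x)\rvert$ for $\bm Z \sim N(\bm 0, \bm I_r)$ by a Berry--Esseen bound for quadratic forms of standardized independent sums, whose controlling Lyapunov ratio is $r^{1/4}\sum_{i=1}^n \E\shortnorm{\bm\xi_i/\sqrt{n}}_2^3$ with $\bm\xi_i = -\bm U'\bm K_n^{-1/2}\xmat_{i,\Mset\cup\Sset}(y_i - b'(\xvec_i'\bvec^*))/\sqrt{\phi^*}$. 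Using $\shortnorm{\bm U'\bm v}_2 \leq \shortnorm{\bm v}_2$ and the uniform exponential-moment bound~\ref{assumption:glm:expectation bound for chernoff} to control $\E\lvert y_i - b'(\xvec_i'\bvec^*)\rvert^3$, this ratio is dominated by a constant multiple of $n^{-3/2}r^{1/4}\sum_{i=1}^n (\xmat_{i,\Mset\cup\Sset}\bm K_n^{-1}\xmat_{i,\Mset\cup\Sset}')^{3/2}$, which is exactly the quantity assumed to vanish in~\ref{assumption:lyapunov condition}; this yields \eqref{eqn:oracle test statistic convergence} for the common form $W_n$. The remaining care is in propagating the $o_p(1)$ remainders---in particular the third-order Taylor remainder in the likelihood-ratio expansion---through to a Kolmogorov-distance statement, and it is here that $s+m = o(n^{1/3})$ and the third-derivative bound~\ref{assumption:glm:third derivative bound} are indispensable, since they force those remainders to be negligible relative to the $O_p(\sqrt r)$ scale of $\bm W$.
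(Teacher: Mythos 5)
Your proposal follows essentially the same route as the paper's own proof. Your common quadratic form $W_n = (\bm g - \bm d)'\bm P_n(\bm g - \bm d)/\phi^*$ is algebraically identical to the paper's pivot $T_0 = (\bm \omega_n + \sqrt{n}\bm h_n)'\bm \Psi^{-1}(\bm \omega_n + \sqrt{n}\bm h_n)/\phi^*$ (in your notation $\bm \omega_n = -\bm A\bm g$ and $\sqrt{n}\bm h_n = \bm A \bm d$), the reductions of $T_W$, $T_S$, and $T_L$ to this form use the same ingredients the paper does (the limiting expressions of Lemma \ref{lemma:oracle asymptotics}, consistency of the plug-in Hessians, and for $T_L$ the second-order expansion about $\hat{\bvec}_a^{\oracle}$ with $\nabla_{\Mset \cup \Sset}\ell_n(\hat{\bvec}_a^{\oracle}) = \bm 0$ from \ref{assumption:full oracle unique}), and your final step is the same \cite{Bentkus2005} bound over convex sets with the Lyapunov ratio controlled by \ref{assumption:glm:expectation bound for chernoff} and \ref{assumption:lyapunov condition}. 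The only piece you flag but do not execute --- converting $T = W_n + o_p(\cdot)$ into a Kolmogorov-distance statement --- is completed in the paper by an anti-concentration argument for $\chi^2(r,\nu_n)$ (Lemma S.7 of \cite{Shi2019}) applied to windows of the form $[x - \epsilon r,\, x + \epsilon r]$; your remark that the remainders must be negligible relative to the fluctuation scale of $\bm W$ is precisely the consideration that lemma is invoked to handle, so this is an unfinished step rather than a wrong turn.
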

\begin{remark}
	Since we allow $r$ to diverge with $n$, we cannot say that the Wald, score, and likelihood ratio test statistics ``converge in distribution'' to $\chi^2(r, \nu_n)$ random variables in the usual sense--- 
	because the target distribution is not fixed, the notion of convergence in distribution is not well-defined.
	Instead, \eqref{eqn:oracle test statistic convergence} establishes that for $T = T_W(\hat{\bvec}_a^{\oracle})$, $T_S(\hat{\bvec}_0^{\oracle})$, and $T_L(\hat{\bvec}_a^{\oracle},\hat{\bvec}_0^{\oracle})$, the difference between $\Pr(T \leq x)$ and $\Pr(\chi^2(r, \nu_n) \leq x)$ vanishes as $n \to \infty$.
	As such, the distribution of $\chi^2(r, \nu_n)$ gives us a good approximation of the distribution of $T$ for large $n$.
	To reflect this distinction between \eqref{eqn:oracle test statistic convergence} and the usual convergence in distribution, we say that the distribution of $T$ is ``approximately'' $\chi^2(r, \nu_n)$ for large $n$.
\end{remark}

\begin{remark}
	In our proof, we apply Theorem 1 of \cite{Bentkus2005} to derive a Lyapunov-type upper bound for $\sup_x \left| \Pr(T \leq x) - \Pr(\chi^2(r, \nu_n) \leq x) \right|$ which depends on $r$.
	Condition \ref{assumption:lyapunov condition} controls the growth rate of $r$ and guarantees that the upper bound converges to $0$ as $n \to \infty$.
	\cite{Shi2019} also assume \ref{assumption:lyapunov condition} to derive approximate distributions for the partial penalized test statistics.
\end{remark}

\subsection{Asymptotic results for the LLA solutions}
By leveraging the strong oracle property, we can use our asymptotic results for the oracle estimators to establish equivalent guarantees for the two-step LLA solutions.
By combining the strong oracle property and Lemma \ref{lemma:oracle asymptotics}, we derive convergence rates and limiting expressions for $\hat{\bvec}^{(2)}_0$ and $\hat{\bvec}^{(2)}_a$, and show that they are selection consistent:
\begin{thm}\label{thm:two step LLA asymptotics}
	Suppose that \emph{\ref{assumption:reduced oracle unique}}--\emph{\ref{assumption:glm:self concordant}} hold, 
	that $\hat{\bvec}^{\lasso}$ is used as the initial estimator for the LLA algorithms for \eqref{eqn:Shi reduced} and \eqref{eqn:Shi full},
	and that $\lambda_{\lasso}$ and $\lambda$ satisfy the conditions outlined in Corollary \ref{cor:glm lla convergence prob with lasso}.
	Under these conditions, if $s+m = o(n^{1/2})$, then 
	\begin{equation}
		\norm{ \hat{\bvec}^{(2)}_{0, \Mset \cup \Sset}  - \bvec^*_{\Mset \cup \Sset} }_2 = O_p\left( \sqrt{ \frac{s+m-r}{n} }\right) \label{eqn:reduced lla consistency}
	\end{equation}
	and
	\begin{equation}
		\norm{ \hat{\bvec}^{(2)}_{a, \Mset \cup \Sset}  - \bvec^*_{\Mset \cup \Sset} }_2 = O_p\left( \sqrt{ \frac{s+m}{n} }\right) \label{eqn:full lla consistency}
	\end{equation}
	and $\hat{\bvec}^{(2)}_{0, (\Mset \cup \Sset)^c } = \bm 0$ and  $\hat{\bvec}^{(2)}_{a, (\Mset \cup \Sset)^c } = \bm 0$ with probability converging to $1$ as $n \to \infty$.
	Furthermore, if $s+m = o(n^{1/3})$, then
	\begin{align}
		\sqrt{n}(\hat{\bvec}^{(2)}_{0, \Mset \cup \Sset} - \bvec_{\Mset \cup \Sset}^* )
		= & \sqrt{n} \bm{K}_n^{-1/2} (\bm P_n - \bm I) \bm {K}_n^{-1/2} \nabla_{\Mset \cup \Sset} \ell_n(\bvec^*) \notag \\
		& -  \sqrt{n} \bm K_n^{-1/2} \bm P_n \bm K_n^{1/2} \begin{bmatrix} \Cmat'(\Cmat \Cmat')^{-1} \bm h_n \\ \bm 0 \end{bmatrix}
		+ o_p(1), \label{eqn:reduced lla limiting expression}
	\end{align}
	where $\bm P_n = \bm K_n^{-1/2} \begin{bsmallmatrix} \Cmat' \\ \bm 0' \end{bsmallmatrix} \bm \Psi^{-1} \begin{bsmallmatrix} \Cmat & \bm 0 \end{bsmallmatrix} \bm K_n^{-1/2}$,
	and
	\begin{equation}
		\sqrt{n}(\hat{\bvec}^{(2)}_{a, \Mset \cup \Sset} - \bvec_{\Mset \cup \Sset}^* ) = -\sqrt{n} \bm K_n^{-1} \nabla_{\Mset \cup \Sset} \ell_n(\bvec^*) + o_p(1). \label{eqn:full lla limiting expression}
	\end{equation}
\end{thm}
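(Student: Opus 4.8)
The plan is to prove Theorem~\ref{thm:two step LLA asymptotics} purely as a \emph{transfer argument}: every conclusion stated for $\hat{\bvec}^{(2)}_0$ and $\hat{\bvec}^{(2)}_a$ is already established for the oracle estimators $\hat{\bvec}^{\oracle}_0$ and $\hat{\bvec}^{\oracle}_a$ in Lemma~\ref{lemma:oracle asymptotics}, and the strong oracle property of Corollary~\ref{cor:glm lla convergence prob with lasso} lets me carry those conclusions over because the two-step LLA solutions coincide with the oracle estimators with probability tending to one. First I would check that the hypotheses \ref{assumption:reduced oracle unique}--\ref{assumption:glm:self concordant} together with the stated penalty-rate conditions simultaneously satisfy the hypotheses of both Corollary~\ref{cor:glm lla convergence prob with lasso} and Lemma~\ref{lemma:oracle asymptotics}; this is immediate, since the assumptions of the theorem form a superset of those required by each. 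Writing $G_{0,n} = \{\hat{\bvec}^{(2)}_0 = \hat{\bvec}^{\oracle}_0\}$ and $G_{a,n} = \{\hat{\bvec}^{(2)}_a = \hat{\bvec}^{\oracle}_a\}$, Corollary~\ref{cor:glm lla convergence prob with lasso} gives $\Pr(G_{0,n}) \to 1$ and $\Pr(G_{a,n}) \to 1$ as $n \to \infty$.

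The key tool is an elementary transfer principle: if $\Pr(U_n = V_n) \to 1$, then any $O_p$, $o_p$, or exact-equality statement about $U_n$ passes to $V_n$. For a rate sequence $c_n > 0$ and any $M > 0$,
\begin{equation*}
	\Pr\!\left( \shortnorm{V_n}_2/c_n > M \right) \le \Pr\!\left( \shortnorm{U_n}_2/c_n > M \right) + \Pr(U_n \neq V_n),
\end{equation*}
so, given $\epsilon > 0$, I would choose $M$ large enough that the first term is below $\epsilon/2$ uniformly in $n$ (using $U_n = O_p(c_n)$) and then take $n$ large enough that $\Pr(U_n \neq V_n) < \epsilon/2$; this yields $V_n = O_p(c_n)$. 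Taking $c_n \equiv 1$ and replacing $M$ by a fixed $\epsilon$ gives the $o_p$ transfer in the same way. The point worth emphasizing is that $\Pr(U_n \neq V_n)$ does not depend on the truncation level $M$, so the argument is unaffected by the divergence of $s+m$ and of the implicit constants with $n$.

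Applying this principle four times completes the proof. For the rates, I would take $U_n = \hat{\bvec}^{\oracle}_{0,\Mset\cup\Sset} - \bvec^*_{\Mset\cup\Sset}$ and $V_n = \hat{\bvec}^{(2)}_{0,\Mset\cup\Sset} - \bvec^*_{\Mset\cup\Sset}$, which agree on $G_{0,n}$, transferring \eqref{eqn:reduced oracle consistency} to \eqref{eqn:reduced lla consistency} (and likewise \eqref{eqn:full oracle consistency} to \eqref{eqn:full lla consistency} on $G_{a,n}$), which needs only $s+m = o(n^{1/2})$. For selection consistency, $\hat{\bvec}^{\oracle}_{0,(\Mset\cup\Sset)^c} = \bm 0$ holds deterministically by the constraint defining \eqref{eqn:reduced oracle}, so $\Pr(\hat{\bvec}^{(2)}_{0,(\Mset\cup\Sset)^c} = \bm 0) \ge \Pr(G_{0,n}) \to 1$, and identically for the full model. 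For the limiting expressions, I would note that the ``main term'' on the right of \eqref{eqn:reduced oracle limiting expression} depends only on $\bm K_n$, $\Cmat$, $\bm h_n$, and $\nabla_{\Mset\cup\Sset}\ell_n(\bvec^*)$---not on the estimator---so defining the oracle remainder $R^{\oracle}_{0,n}$ and the LLA remainder $R^{(2)}_{0,n}$ as the respective differences from this common main term, we have $R^{(2)}_{0,n} = R^{\oracle}_{0,n}$ on $G_{0,n}$. Since Lemma~\ref{lemma:oracle asymptotics} gives $R^{\oracle}_{0,n} = o_p(1)$ under $s+m = o(n^{1/3})$, the $o_p$ transfer yields $R^{(2)}_{0,n} = o_p(1)$, which is exactly \eqref{eqn:reduced lla limiting expression}; the full-model expression \eqref{eqn:full lla limiting expression} follows identically from \eqref{eqn:full oracle limiting expression} on $G_{a,n}$.

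There is no serious analytical obstacle here, since all of the hard probabilistic work already resides in Lemma~\ref{lemma:oracle asymptotics} and Corollary~\ref{cor:glm lla convergence prob with lasso}. The only point requiring care is to keep the transfer honest in the regime of diverging dimension: because the objects appearing in \eqref{eqn:reduced lla limiting expression}--\eqref{eqn:full lla limiting expression} (such as $\bm K_n$, $\bm P_n$, and $\nvec$) have dimensions that grow with $n$, I would make sure that ``$o_p(1)$'' is read as a statement about the scalar norm of the remainder vector, and that the coincidence events $G_{0,n}, G_{a,n}$ are invoked in conjunction with---rather than after---the rates supplied by Lemma~\ref{lemma:oracle asymptotics}.
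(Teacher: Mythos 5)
Your proposal is correct and is essentially the paper's own argument: the paper likewise fixes $\epsilon > 0$, uses Corollary \ref{cor:glm lla convergence prob with lasso} to make $\Pr(\hat{\bvec}^{(2)}_0 \neq \hat{\bvec}^{\oracle}_0)$ small, and applies a union bound with the oracle rates from Lemma \ref{lemma:oracle asymptotics} to transfer \eqref{eqn:reduced oracle consistency}, the support recovery, and \eqref{eqn:reduced oracle limiting expression} to the two-step LLA solution (and symmetrically for the full model). Your explicit treatment of the limiting expressions via remainders from the estimator-independent main term is exactly the "same strategy" the paper invokes but leaves implicit.
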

 
Likewise, by combining the strong oracle property and Lemma \ref{lemma:oracle test statistic distribution}, we derive approximate large sample distributions for the partial penalized test statistics evaluated at $\hat{\bvec}^{(2)}_0$ and $\hat{\bvec}^{(2)}_a$---our main result for this paper:
\begin{thm} \label{thm:lla test statistic distribution}
	Suppose that \emph{\ref{assumption:reduced oracle unique}}--\emph{\ref{assumption:lyapunov condition}} hold, 
	that $\hat{\bvec}^{\lasso}$ is used as the initial estimator for the LLA algorithms for \eqref{eqn:Shi reduced} and \eqref{eqn:Shi full},
	and that $\lambda_{\lasso}$ and $\lambda$ satisfy the conditions outlined in Corollary \ref{cor:glm lla convergence prob with lasso}.
	Under these conditions, if $s+m = o(n^{1/3})$ and $|\hat{\phi} - \phi^*| = o_p(1)$,	
	then for $T = T_W(\hat{\bvec}_a^{(2)})$, $T_S(\hat{\bvec}_0^{(2)})$, and $T_L(\hat{\bvec}_a^{(2)},\hat{\bvec}_0^{(2)})$,  
	\begin{equation}
		\sup_x \left| \Pr(T \leq x) - \Pr(\chi^2(r, \nu_n) \leq x) \right| \to 0 \label{eqn:thm test statistic convergence}
	\end{equation}
	as $n \to \infty$, where $\chi^2(r, \nu_n)$ is a noncentral chi-square random variable with $r$ degrees of freedom and noncentrality parameter
	$ \nu_n = n \bm{h}_n ' {\bm{\Psi}}^{-1}\bm{h}_n/\phi^*$.
\end{thm}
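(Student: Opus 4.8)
The plan is to prove Theorem \ref{thm:lla test statistic distribution} as a direct consequence of two results already established: the strong oracle property (Corollary \ref{cor:glm lla convergence prob with lasso}) and the oracle distributional result (Lemma \ref{lemma:oracle test statistic distribution}). The central observation is that each of $T_W$, $T_S$, and $T_L$ is a \emph{deterministic} function of the estimators $\hat{\bvec}_a$ and $\hat{\bvec}_0$ together with the fixed data and the consistent estimate $\hat{\phi}$: the data-dependent ingredients $\hat{\Sset}(\cdot)$ and $\widehat{\bm K}_n(\cdot)$ entering each statistic are themselves determined by the estimator at which the statistic is evaluated. Consequently, on any event where the two-step LLA solutions coincide with the oracle estimators, the statistics evaluated at the LLA solutions equal those evaluated at the oracle estimators exactly, and the chi-square approximation transfers.

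First I would fix one of the three statistics and write $T^{(2)}$ for its value at the LLA solutions and $T^{\oracle}$ for its value at the oracle estimators; for the likelihood ratio statistic both arguments $\hat{\bvec}_a$ and $\hat{\bvec}_0$ are replaced simultaneously. I would then define the coincidence event
\begin{equation}
	\Eset_n = \left\{ \hat{\bvec}^{(2)}_0 = \hat{\bvec}^{\oracle}_0 \right\} \cap \left\{ \hat{\bvec}^{(2)}_a = \hat{\bvec}^{\oracle}_a \right\}. \notag
\end{equation}
Under the stated hypotheses on $\lambda_{\lasso}$, $\lambda$, and the $\ell_1$-penalized initial estimator, Corollary \ref{cor:glm lla convergence prob with lasso} together with the union bound gives $\Pr(\Eset_n^c) \to 0$ as $n \to \infty$. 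By the observation above, $T^{(2)} = T^{\oracle}$ on $\Eset_n$, since coincidence of the estimators forces $\hat{\Sset}(\hat{\bvec}^{(2)}_i) = \hat{\Sset}(\hat{\bvec}^{\oracle}_i)$ and $\widehat{\bm K}_n(\hat{\bvec}^{(2)}_i) = \widehat{\bm K}_n(\hat{\bvec}^{\oracle}_i)$ for $i = 0,a$.

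Next I would run the standard transfer argument. For each $x$, splitting on $\Eset_n$ and using $T^{(2)} = T^{\oracle}$ there,
\begin{equation}
	\left| \Pr(T^{(2)} \leq x) - \Pr(T^{\oracle} \leq x) \right|
	= \left| \Pr(T^{(2)} \leq x, \Eset_n^c) - \Pr(T^{\oracle} \leq x, \Eset_n^c) \right|
	\leq \Pr(\Eset_n^c). \notag
\end{equation}
Because the bound $\Pr(\Eset_n^c)$ is free of $x$, taking the supremum gives $\sup_x | \Pr(T^{(2)} \leq x) - \Pr(T^{\oracle} \leq x) | \leq \Pr(\Eset_n^c) \to 0$. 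The hypotheses here imply those of Lemma \ref{lemma:oracle test statistic distribution} (in particular \ref{assumption:lyapunov condition}, $s+m = o(n^{1/3})$, and $|\hat{\phi}-\phi^*| = o_p(1)$), so $\sup_x | \Pr(T^{\oracle} \leq x) - \Pr(\chi^2(r, \nu_n) \leq x) | \to 0$. A triangle inequality combines the two displays to yield \eqref{eqn:thm test statistic convergence} for each of the three statistics, with $\nu_n = n \bm{h}_n' \bm{\Psi}^{-1} \bm{h}_n / \phi^*$ inherited directly from the oracle result.

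I expect this argument to be essentially routine, and no new asymptotic analysis is required: the genuine distributional content—the noncentral chi-square limit and its noncentrality parameter—is supplied entirely by Lemma \ref{lemma:oracle test statistic distribution}. The only point that warrants care is the verification that coincidence of the estimators on $\Eset_n$ really does force coincidence of the \emph{full} statistics, including their nuisance components $\hat{\Sset}(\cdot)$ and $\widehat{\bm K}_n(\cdot)$, so that no distributional discrepancy can enter through those terms; once this is pinned down, the uniformity in $x$ comes for free because the controlling quantity $\Pr(\Eset_n^c)$ does not depend on $x$.
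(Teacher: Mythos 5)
Your proposal is correct and follows essentially the same route as the paper: invoke Corollary \ref{cor:glm lla convergence prob with lasso} to get coincidence of the LLA and oracle estimators (hence of the full statistics, including $\hat{\Sset}(\cdot)$ and $\widehat{\bm K}_n(\cdot)$) with probability tending to one, deduce a bound on $\sup_x |\Pr(T^{(2)} \leq x) - \Pr(T^{\oracle} \leq x)|$ that is uniform in $x$, and finish with Lemma \ref{lemma:oracle test statistic distribution} and the triangle inequality. Your event-splitting bound $|\Pr(T^{(2)} \leq x) - \Pr(T^{\oracle} \leq x)| \leq \Pr(\Eset_n^c)$ is a slightly cleaner packaging of the paper's two one-sided $\epsilon$ inequalities, but the argument is the same.
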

Theorem \ref{thm:lla test statistic distribution} provides guarantees for the partial penalized test statistics evaluated at specific computed solutions to \eqref{eqn:Shi reduced} and \eqref{eqn:Shi full}---namely, the two-step LLA solutions.
Critically, these solutions can be directly obtained using Algorithms \ref{algo:LLA for reduced estimator} and \ref{algo:LLA for full estimator}.
As such, Theorem \ref{thm:lla test statistic distribution} closes the previous gap between theory and computation for partial penalized tests of GLMs.

Using the $\chi^2(r, \nu_n)$ approximation \eqref{eqn:thm test statistic convergence}, we can characterize the asymptotic behavior of the partial penalized tests conducted using the two-step LLA solutions.
We see that under the null \eqref{eqn:H0}, $\bm h_n = \bm 0$ and, by extension, $\nu_n = 0$.
As such, \eqref{eqn:thm test statistic convergence} implies that the test statistics are approximately $\chi^2(r)$ random variables under the null and therefore achieve their nominal size $\alpha$ asymptotically.
In addition, we see that when $\bm h_n \neq \bm 0$, the partial penalized tests have approximate power $\Pr(\chi^2(r, \nu_n) > \chi^2_{\alpha}(r))$.
Lastly, \eqref{eqn:thm test statistic convergence} implies 
$$\sup_x \left| \Pr(T_1 \leq x) - \Pr(T_2 \leq x) \right| \to 0$$
for $T_1, T_2 \in \{T_W(\hat{\bvec}_a^{(2)}), T_S(\hat{\bvec}_0^{(2)}), T_L(\hat{\bvec}_a^{(2)},\hat{\bvec}_0^{(2)})\}$, meaning the partial penalized test statistics are approximately equivalent for large $n$.
We summarize these findings below:
\begin{cor}\label{cor:test statistic asymptotics}
	Suppose that \emph{\ref{assumption:reduced oracle unique}}--\emph{\ref{assumption:lyapunov condition}} hold, 
	that $\hat{\bvec}^{\lasso}$ is used as the initial estimator for the LLA algorithms for \eqref{eqn:Shi reduced} and \eqref{eqn:Shi full},
	that $\lambda_{\lasso}$ and $\lambda$ satisfy the conditions outlined in Corollary \ref{cor:glm lla convergence prob with lasso},
	that $s+m = o(n^{1/3})$, and 
	that $|\hat{\phi} - \phi^*| = o_p(1)$.
	Then the partial penalized Wald, score, and likelihood ratio test statistics evaluated at the two-step LLA solutions to \eqref{eqn:Shi reduced} and \eqref{eqn:Shi full} satisfy the following:
	\begin{enumerate}[label = (\alph*), leftmargin=2\parindent]
		\item Under $\text{H}_0: \Cmat \bvec_{\Mset}^* = \bm t$, for any significance level $0 < \alpha < 1$, 
		$$ \lim_{n \to \infty} \Pr(T > \chi^2_{\alpha}(r))  =\alpha $$
		for $T = T_W(\hat{\bvec}_a^{(2)})$, $T_S(\hat{\bvec}_0^{(2)})$, and $T_L(\hat{\bvec}_a^{(2)},\hat{\bvec}_0^{(2)})$, where $\chi^2_{\alpha}(r)$ denotes the upper-$\alpha$ quantile of a central chi-square distribution with $r$ degrees of freedom.
		\item Under the alternative $\Cmat \bvec_{\Mset}^* = \bm{t} + \bm{h}_n$, where $\norm{\bm{h}_n}_2 = O(\sqrt{\min\{s+m-r,r\}/n})$, for any significance level $0 < \alpha < 1$,
		$$ \lim_{n \to \infty} \left| \Pr(T > \chi^2_{\alpha}(r)) - \Pr(\chi^2(r, \nu_n) > \chi^2_{\alpha}(r)) \right| = 0 $$
		for $T = T_W(\hat{\bvec}_a^{(2)})$, $T_S(\hat{\bvec}_0^{(2)})$, and $T_L(\hat{\bvec}_a^{(2)},\hat{\bvec}_0^{(2)})$, where $ \nu_n =n \bm{h}_n ' {\bm{\Psi}}^{-1}\bm{h}_n/\phi^*$.
		\item For $T_1, T_2 \in \{T_W(\hat{\bvec}_a^{(2)}), T_S(\hat{\bvec}_0^{(2)}), T_L(\hat{\bvec}_a^{(2)},\hat{\bvec}_0^{(2)})\}$, 
		$$\sup_x \left| \Pr(T_1 \leq x) - \Pr(T_2 \leq x) \right| \to 0 \text{ \; as \; } n \to \infty .$$
	\end{enumerate} 
\end{cor}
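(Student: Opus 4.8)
The plan is to derive all three parts directly from Theorem \ref{thm:lla test statistic distribution}, which already supplies, for each of the three statistics $T = T_W(\hat{\bvec}_a^{(2)})$, $T_S(\hat{\bvec}_0^{(2)})$, $T_L(\hat{\bvec}_a^{(2)},\hat{\bvec}_0^{(2)})$, the uniform bound $\sup_x |\Pr(T \leq x) - \Pr(\chi^2(r,\nu_n) \leq x)| \to 0$ under exactly the hypotheses assumed here. The remaining work is not analytic; it consists of specializing that Kolmogorov-distance bound to the rejection event $\{T > \chi^2_\alpha(r)\}$ and, for part (c), comparing two statistics through their common reference law.

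For part (a), I would first note that under $\text{H}_0$ we have $\bm{h}_n = \bm 0$, so the noncentrality parameter $\nu_n = n \bm{h}_n' \bm{\Psi}^{-1} \bm{h}_n/\phi^*$ vanishes and $\chi^2(r,\nu_n)$ collapses to a central $\chi^2(r)$. Evaluating the uniform bound at the threshold $x = \chi^2_\alpha(r)$ gives
\[
\left| \Pr(T \leq \chi^2_\alpha(r)) - \Pr(\chi^2(r) \leq \chi^2_\alpha(r)) \right| \to 0 .
\]
By the defining property of the upper-$\alpha$ quantile, $\Pr(\chi^2(r) \leq \chi^2_\alpha(r)) = 1 - \alpha$, and taking complements yields $\Pr(T > \chi^2_\alpha(r)) \to \alpha$. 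The point I would emphasize is that $\chi^2_\alpha(r)$ is a \emph{moving} evaluation point, since $r$ is permitted to diverge with $n$; it is precisely the uniformity over $x$ in Theorem \ref{thm:lla test statistic distribution} that licenses substituting this $n$-dependent threshold, so the conclusion cannot be reached from pointwise convergence in distribution alone.

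For part (b) I would apply the identical manipulation without imposing $\bm{h}_n = \bm 0$: evaluating the uniform bound at $x = \chi^2_\alpha(r)$ and passing to complements gives
\[
\left| \Pr(T > \chi^2_\alpha(r)) - \Pr(\chi^2(r, \nu_n) > \chi^2_\alpha(r)) \right| \to 0 ,
\]
which is the claimed statement with the noncentrality $\nu_n = n \bm{h}_n' \bm{\Psi}^{-1} \bm{h}_n/\phi^*$. For part (c), I would invoke the triangle inequality: since $T_1$ and $T_2$ each satisfy the uniform bound against the \emph{common} reference law $\chi^2(r,\nu_n)$,
\[
\sup_x \left| \Pr(T_1 \leq x) - \Pr(T_2 \leq x) \right| \leq \sup_x \left| \Pr(T_1 \leq x) - \Pr(\chi^2(r,\nu_n) \leq x) \right| + \sup_x \left| \Pr(\chi^2(r,\nu_n) \leq x) - \Pr(T_2 \leq x) \right| ,
\]
and both terms on the right vanish as $n \to \infty$ by Theorem \ref{thm:lla test statistic distribution}.

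I do not anticipate a serious obstacle once Theorem \ref{thm:lla test statistic distribution} is established: the argument is essentially bookkeeping with the Kolmogorov distance. The single point demanding care — and the only place where reasoning based on ordinary convergence in distribution would break down — is the diverging degrees of freedom in parts (a) and (b), where the critical value $\chi^2_\alpha(r)$ changes with $n$. The uniform-in-$x$ formulation of Theorem \ref{thm:lla test statistic distribution} is exactly the tool needed to evaluate the difference of distribution functions at this moving threshold, and I would make that dependence explicit in the write-up.
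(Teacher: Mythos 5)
Your proposal is correct and follows essentially the same route as the paper: the paper likewise derives all three parts directly from the uniform Kolmogorov-distance bound in Theorem \ref{thm:lla test statistic distribution}, noting that $\bm h_n = \bm 0$ forces $\nu_n = 0$ under $\text{H}_0$, evaluating the bound at $x = \chi^2_\alpha(r)$, and using the triangle inequality through the common $\chi^2(r,\nu_n)$ reference law for part (c). Your explicit remark that the uniformity in $x$ is what permits evaluation at the $n$-dependent threshold $\chi^2_\alpha(r)$ (since $r$ may diverge) is a point the paper makes only implicitly, and it is exactly the right thing to emphasize.
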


\section{Numerical Studies} \label{sec:sims}

Corollary \ref{cor:glm lla convergence prob with lasso} establishes that the two-step LLA solutions are equal to the oracle estimators with probability converging to $1$ as $n \to \infty$.
In these simulations, we examine whether the tests using the two-step LLA solutions and those using the oracle estimators provide comparable results when $n$ is relatively small.
In addition, we investigate whether the finite-sample performance of the partial penalized tests evaluated at the two-step LLA solutions aligns with the asymptotic guarantees given in Theorem \ref{thm:lla test statistic distribution} and Corollary \ref{cor:test statistic asymptotics}.
We use the SCAD penalty throughout these simulations and set $a = 3.7$, as suggested by \cite{Fan2001}.

\subsection{Linear model}
For our first case, we generate the response from a linear model $y_i = \xvec_i' \bvec^* + \varepsilon_i$ for $i = 1, \ldots, n$, where $\varepsilon_i \sim N(0, 1)$ and $\xvec_i \sim N(\bm{0}, \bm{\Sigma})$ is a $p$-dimensional vector of predictors with $\Sigma_{jk} = 0.5^{|j-k|}$ for $j,k = 1,\ldots, p$.
We set $\bvec^* = (2, -2 -h_1, 0, \ldots)'$ and vary $h_1$ and $p$ to create different simulation settings.
In particular, we run simulations with every combination of $p \in \{50, 200\}$ and $h_1 \in \{0, 0.1, 0.2, 0.4\}$.
We generate $n = 100$ observations per replication in every simulation setting.

We test the following hypotheses using the partial penalized Wald, score, and likelihood ratio tests evaluated at the two-step LLA solutions
as well as the Wald, score, and likelihood ratio tests evaluated at the oracle estimators:
$\text{H}_0^{(1)}: \beta_1^* + \beta_2^* = 0$, $\text{H}_0^{(2)}: \beta_2^* = -2$, and $\text{H}_0^{(3)}: \beta_1^* + \beta_2^* + \beta_3^* + \beta_4^* = 0$.
When $h_1 = 0$, all three null hypotheses are true and we expect the tests to achieve rejection rates near their nominal size.
As $h_1$ increases, the true alternative moves farther from the null and we expect the rejection rates of the tests to increase.

\spacingset{1}
\begin{table}[ht!]
	\centering
	\caption{Linear Model Testing Rejection Rates (\%)} \label{tbl:linear test sims ar1}
	\resizebox{\columnwidth}{!}
	{
	\begin{tabular}{rrllllll}
		\toprule
		\multicolumn{1}{c}{ } & \multicolumn{1}{c}{ } & \multicolumn{3}{c}{LLA} & \multicolumn{3}{c}{Oracle} \\
		\cmidrule(l{3pt}r{3pt}){3-5} \cmidrule(l{3pt}r{3pt}){6-8}
		$p$ & $h_1$ & LRT & Wald & Score & LRT & Wald & Score\\
		\midrule
		\addlinespace[0.3em]
		\multirow{15}{*}{$50$} & & \multicolumn{6}{c}{ $\text{H}_0^{(1)}$ }\\
		\cline{3-8} & 0.0 & 4.9 (0.68) & 4.9 (0.68) & 4.9 (0.68) & 4.9 (0.68) & 4.9 (0.68) & 4.9 (0.68)\\
		 & 0.1 & 15.7 (1.15) & 15.2 (1.14) & 14.4 (1.11) & 15.7 (1.15) & 15.2 (1.14) & 14.4 (1.11)\\
		 & 0.2 & 40.5 (1.55) & 40 (1.55) & 38.5 (1.54) & 40.5 (1.55) & 40 (1.55) & 38.3 (1.54)\\
		 & 0.4 & 92.4 (0.84) & 91.9 (0.86) & 91.3 (0.89) & 92.4 (0.84) & 91.9 (0.86) & 91.3 (0.89)\\
		\addlinespace[0.3em]
		 & & \multicolumn{6}{c}{ $\text{H}_0^{(2)}$ }\\
		\cline{3-8} & 0.0 & 4.4 (0.65) & 4.4 (0.65) & 3.6 (0.59) & 4.4 (0.65) & 4.4 (0.65) & 3.5 (0.58)\\
		 & 0.1 & 13.8 (1.09) & 13.6 (1.08) & 13 (1.06) & 13.8 (1.09) & 13.6 (1.08) & 12.9 (1.06)\\
		 & 0.2 & 38 (1.53) & 37.5 (1.53) & 36.6 (1.52) & 38.1 (1.54) & 37.5 (1.53) & 36.6 (1.52)\\
		 & 0.4 & 91 (0.9) & 91.2 (0.9) & 90.4 (0.93) & 91.4 (0.89) & 91.2 (0.9) & 90.4 (0.93)\\
		\addlinespace[0.3em]
		 & & \multicolumn{6}{c}{ $\text{H}_0^{(3)}$ }\\
		\cline{3-8} & 0.0 & 5.8 (0.74) & 5.6 (0.73) & 5.3 (0.71) & 5.8 (0.74) & 5.6 (0.73) & 5.2 (0.7)\\
		 & 0.1 & 12 (1.03) & 10.9 (0.99) & 10.4 (0.97) & 11.6 (1.01) & 10.9 (0.99) & 10.2 (0.96)\\
		 & 0.2 & 29.4 (1.44) & 28.6 (1.43) & 27.3 (1.41) & 29.5 (1.44) & 28.6 (1.43) & 27.3 (1.41)\\
		 & 0.4 & 80.7 (1.25) & 79.8 (1.27) & 78.8 (1.29) & 80.7 (1.25) & 79.8 (1.27) & 78.8 (1.29)\\
		\midrule
		\addlinespace[0.3em]
		\multirow{15}{*}{$200$} & & \multicolumn{6}{c}{ $\text{H}_0^{(1)}$ }\\
		\cline{3-8} & 0.0 & 4.2 (0.63) & 3.9 (0.61) & 3.8 (0.6) & 4.2 (0.63) & 3.9 (0.61) & 3.8 (0.6)\\
		 & 0.1 & 15.4 (1.14) & 14.7 (1.12) & 14 (1.1) & 15.3 (1.14) & 14.7 (1.12) & 14.1 (1.1)\\
		 & 0.2 & 42.9 (1.57) & 42.7 (1.56) & 41 (1.56) & 42.9 (1.57) & 42.7 (1.56) & 41 (1.56)\\
		 & 0.4 & 92.6 (0.83) & 92.4 (0.84) & 91.3 (0.89) & 92.6 (0.83) & 92.4 (0.84) & 91.3 (0.89)\\
		\addlinespace[0.3em]
		 & & \multicolumn{6}{c}{ $\text{H}_0^{(2)}$ }\\
		\cline{3-8} & 0.0 & 5.9 (0.75) & 5.5 (0.72) & 5.1 (0.7) & 5.8 (0.74) & 5.5 (0.72) & 5.1 (0.7)\\
		 & 0.1 & 16.3 (1.17) & 16.1 (1.16) & 15.2 (1.14) & 16.4 (1.17) & 16.1 (1.16) & 15.2 (1.14)\\
		 & 0.2 & 41.2 (1.56) & 41.1 (1.56) & 39.5 (1.55) & 41.2 (1.56) & 41.1 (1.56) & 39.5 (1.55)\\
		 & 0.4 & 92 (0.86) & 91.9 (0.86) & 91.5 (0.88) & 92.1 (0.85) & 91.9 (0.86) & 91.5 (0.88)\\
		\addlinespace[0.3em]
		 & & \multicolumn{6}{c}{ $\text{H}_0^{(3)}$ }\\
		\cline{3-8} & 0.0 & 5.5 (0.72) & 5.2 (0.7) & 4.8 (0.68) & 5.5 (0.72) & 5.2 (0.7) & 4.8 (0.68)\\
		 & 0.1 & 11.6 (1.01) & 11.2 (1) & 10.6 (0.97) & 11.5 (1.01) & 11.2 (1) & 10.6 (0.97)\\
		 & 0.2 & 28.3 (1.42) & 27.6 (1.41) & 26.8 (1.4) & 28.4 (1.43) & 27.6 (1.41) & 26.7 (1.4)\\
		 & 0.4 & 78.7 (1.29) & 78.1 (1.31) & 77.2 (1.33) & 78.7 (1.29) & 78.1 (1.31) & 77.3 (1.32)\\
		\bottomrule
	\end{tabular}
}
\end{table}
\spacingset{1.45}

The linear model simulation results are given in Table \ref{tbl:linear test sims ar1}, which shows the rejection rates for the tests with significance level $\alpha = 0.05$ over $1000$ simulation replications.
We see that the rejection rates for the partial penalized tests using the two-step LLA solutions are close to the rejection rates for the tests using the oracle estimators in every setting.
In addition, the partial penalized tests achieve rejection rates near their nominal size of $\alpha = 0.05$ when $\text{H}_0$ is true and their rejection rates increase as $h_1$ increases, suggesting that the chi-square approximation for the test statistics from \eqref{eqn:thm test statistic convergence} is appropriate.
We also see that the partial penalized Wald, score, and likelihood ratio tests attain similar rejection rates to each other within each simulation setting, echoing our theoretical finding that the three tests will agree asymptotically.

\subsection{Logistic regression model}
For our second case, we generate the response from a Bernoulli distribution,
with success probability $\Pr(y_i = 1|\xvec_i) = \frac{\exp(\xvec_i' \bvec^*)}{1 + \exp(\xvec_i' \bvec^*)}$ for $i=1, \ldots, n$,
where $\xvec_i \sim N(\bm{0}, \bm{\Sigma})$ with $\Sigma_{jk} = 0.5^{|j-k|}$ for $j,k = 1,\ldots, p$. 
We set $\bvec^* = (2, -2 -h_1, 0, \ldots)'$ and vary $h_1 \in \{0, 0.2, 0.4, 0.8\}$ and $p \in \{50, 200\}$ to create different simulation settings.
We generate $n = 300$ observations per replication in every setting.
As in the linear model simulations, we test $\text{H}_0^{(1)}$, $\text{H}_0^{(2)}$, and $\text{H}_0^{(3)}$ and compare tests using the two-step LLA solutions and the oracle estimators.

\spacingset{1}
\begin{table}[ht!]
	\centering
	\caption{Logistic Regression Testing Rejection Rates (\%)} \label{tbl:logistic test sims ar1}
	\resizebox{\columnwidth}{!}
	{
	\begin{tabular}{rrllllll}
		\toprule
		\multicolumn{1}{c}{ } & \multicolumn{1}{c}{ } & \multicolumn{3}{c}{LLA} & \multicolumn{3}{c}{Oracle} \\
		\cmidrule(l{3pt}r{3pt}){3-5} \cmidrule(l{3pt}r{3pt}){6-8}
		$p$ & $h_1$ & LRT & Wald & Score & LRT & Wald & Score\\
		\midrule
		\addlinespace[0.3em]
		\multirow{15}{*}{$50$} & & \multicolumn{6}{c}{ $\text{H}_0^{(1)}$ }\\
		\cline{3-8} & 0.0 & 6.67 (1.02) & 6 (0.97) & 6.5 (1.01) & 6.67 (1.02) & 5.83 (0.96) & 6.5 (1.01)\\
		 & 0.2 & 19.33 (1.61) & 18.5 (1.59) & 18.83 (1.6) & 19.17 (1.61) & 18.33 (1.58) & 18.83 (1.6)\\
		 & 0.4 & 60.33 (2) & 60.33 (2) & 60.17 (2) & 61 (1.99) & 60.33 (2) & 60.67 (1.99)\\
		 & 0.8 & 98.5 (0.5) & 98.83 (0.44) & 98.67 (0.47) & 99.17 (0.37) & 98.83 (0.44) & 99 (0.41)\\
		\addlinespace[0.3em]
		 & & \multicolumn{6}{c}{ $\text{H}_0^{(2)}$ }\\
		\cline{3-8} & 0.0 & 4.83 (0.88) & 3.67 (0.77) & 3.67 (0.77) & 4.67 (0.86) & 3.67 (0.77) & 3.67 (0.77)\\
		 & 0.2 & 12.5 (1.35) & 9.83 (1.22) & 10 (1.22) & 12.17 (1.33) & 9.83 (1.22) & 9.83 (1.22)\\
		 & 0.4 & 31.83 (1.9) & 25.5 (1.78) & 26.5 (1.8) & 31.33 (1.89) & 25.33 (1.78) & 26.33 (1.8)\\
		 & 0.8 & 79.83 (1.64) & 76.83 (1.72) & 76.83 (1.72) & 79.83 (1.64) & 76.83 (1.72) & 76.83 (1.72)\\
		\addlinespace[0.3em]
		 & & \multicolumn{6}{c}{ $\text{H}_0^{(3)}$ }\\
		\cline{3-8} & 0.0 & 3.67 (0.77) & 3.67 (0.77) & 3.67 (0.77) & 3.67 (0.77) & 3.67 (0.77) & 3.67 (0.77)\\
		 & 0.2 & 16.33 (1.51) & 16 (1.5) & 16.17 (1.5) & 16.33 (1.51) & 16 (1.5) & 16.17 (1.5)\\
		 & 0.4 & 43.83 (2.03) & 42.67 (2.02) & 43.33 (2.02) & 43.83 (2.03) & 42.67 (2.02) & 43.33 (2.02)\\
		 & 0.8 & 92.5 (1.08) & 93.5 (1.01) & 93.67 (0.99) & 93.67 (0.99) & 93.5 (1.01) & 93.67 (0.99)\\
		\midrule
		\addlinespace[0.3em]
		\multirow{15}{*}{$200$} & & \multicolumn{6}{c}{ $\text{H}_0^{(1)}$ }\\
		\cline{3-8} & 0.0 & 5.17 (0.9) & 5.17 (0.9) & 5.17 (0.9) & 5.17 (0.9) & 5.17 (0.9) & 5.17 (0.9)\\
		 & 0.2 & 21.33 (1.67) & 21.17 (1.67) & 21.33 (1.67) & 21.33 (1.67) & 21.17 (1.67) & 21.33 (1.67)\\
		 & 0.4 & 65.17 (1.95) & 64 (1.96) & 64.67 (1.95) & 65.17 (1.95) & 64.17 (1.96) & 64.83 (1.95)\\
		 & 0.8 & 97.83 (0.59) & 98.33 (0.52) & 98.33 (0.52) & 98.33 (0.52) & 98.33 (0.52) & 98.33 (0.52)\\
		\addlinespace[0.3em]
		 & & \multicolumn{6}{c}{ $\text{H}_0^{(2)}$ }\\
		\cline{3-8} & 0.0 & 4.83 (0.88) & 5 (0.89) & 5 (0.89) & 4.33 (0.83) & 4.83 (0.88) & 5 (0.89)\\
		 & 0.2 & 13.83 (1.41) & 10.67 (1.26) & 10.83 (1.27) & 13.83 (1.41) & 10.67 (1.26) & 10.83 (1.27)\\
		 & 0.4 & 36.83 (1.97) & 31.83 (1.9) & 32.17 (1.91) & 36.67 (1.97) & 31.67 (1.9) & 32.17 (1.91)\\
		 & 0.8 & 83 (1.53) & 79.17 (1.66) & 79 (1.66) & 82.67 (1.55) & 78.67 (1.67) & 78.83 (1.67)\\
		\addlinespace[0.3em]
		 & & \multicolumn{6}{c}{ $\text{H}_0^{(3)}$ }\\
		\cline{3-8} & 0.0 & 6.33 (0.99) & 6.5 (1.01) & 6.5 (1.01) & 6.5 (1.01) & 6.5 (1.01) & 6.5 (1.01)\\
		 & 0.2 & 16.5 (1.52) & 16.17 (1.5) & 16.5 (1.52) & 16.5 (1.52) & 16.17 (1.5) & 16.5 (1.52)\\
		 & 0.4 & 46.5 (2.04) & 46 (2.03) & 46.33 (2.04) & 46.67 (2.04) & 46.17 (2.04) & 46.5 (2.04)\\
		 & 0.8 & 93.67 (0.99) & 93.67 (0.99) & 93.83 (0.98) & 94 (0.97) & 93.67 (0.99) & 93.83 (0.98)\\
		\bottomrule
	\end{tabular}
}
\end{table}
\spacingset{1.45}

The logistic regression simulation results are given Table \ref{tbl:logistic test sims ar1}, which shows the rejection rates over 600 simulation replications for the tests with significance level $\alpha = 0.05$.
These results echo those from the linear model simulations. 
In particular, the rejection rates are similar for the tests using the two-step LLA solutions and the tests using the oracle estimators.
As in the linear regression simulations, the partial penalized tests all achieve rejection probabilities close to their nominal size when the null is true; 
the estimated rejection probabilities for the tests increase as $h_1$ increases; 
and the partial penalized Wald, score, and likelihood ratio tests achieve similar rejection probabilities in every setting, though the partial penalized likelihood ratio tests appears to achieve slightly higher rejection probabilities than the Wald and score tests when testing $\text{H}_0^{(2)}$ (the same is true, however, for the tests using the oracle estimators).
As such, the chi-square approximation for the test statistics seems appropriate in this case as well.

\section{Concluding Remarks} \label{sec:conclusion}
In this paper, we close the previous gap between theory and computation for partial penalized tests of GLMs by establishing theoretical guarantees for specific computed solutions to the partial penalized estimation problems---the two-step LLA solutions.
To accomplish this, we employ a novel proof strategy: showing that the two-step LLA solutions possess the strong oracle property (Corollary \ref{cor:glm lla convergence prob with lasso}), then using the strong oracle property and the asymptotic properties of the oracle estimators to establish guarantees for the partial penalized tests evaluated at the two-step LLA solutions (Theorem \ref{thm:lla test statistic distribution} and Corollary \ref{cor:test statistic asymptotics}).
This is, to our knowledge, the first time the strong oracle property has been used to study the asymptotic properties of testing procedures in this way.
While partial penalized tests of GLMs are our primary objects of study, we also propose LLA algorithms and develop theory for a much broader class of partial penalized estimation problems, laying the groundwork for studying and developing other high dimensional inference procedures using this strong oracle approach.

\bibliographystyle{apalike}
\bibliography{lla_test}

\begin{thebibliography}{}

\bibitem[Bach, 2010]{Bach2010}
Bach, F. (2010).
\newblock Self-concordant analysis for logistic regression.
\newblock {\em Electronic Journal of Statistics}, 4:384--414.

\bibitem[Bentkus, 2005]{Bentkus2005}
Bentkus, V. (2005).
\newblock {A Lyapunov-type Bound in Rd}.
\newblock {\em Theory of Probability \& Its Applications}, 49:311--323.

\bibitem[Bickel et~al., 2009]{Bickel2009}
Bickel, P.~J., Ritov, Y., and Tsybakov, A.~B. (2009).
\newblock {Simultaneous Analysis of Lasso and Dantzig Selector}.
\newblock {\em Annals of Statistics}, 37(4):1705--1732.

\bibitem[Cai and Guo, 2017]{Cai2017}
Cai, T.~T. and Guo, Z. (2017).
\newblock {Confidence Intervals for High-dimensional Linear Regression: Minimax
  Rates and Adaptivity}.
\newblock {\em Annals of Statistics}, 45(2):615--646.

\bibitem[Cai et~al., 2021]{Cai2021}
Cai, T.~T., Guo, Z., and Ma, R. (2021).
\newblock {Statistical Inference for High-dimensional Generalized Linear Models
  with Binary Outcomes}.
\newblock {\em Journal of the American Statistical Association}, 116:1--14.

\bibitem[Fan and Li, 2001]{Fan2001}
Fan, J. and Li, R. (2001).
\newblock Variable selection via nonconcave penalized likelihood and its oracle
  properties.
\newblock {\em Journal of the American Statistical Association}, 96:1348--1360.

\bibitem[Fan and Lv, 2011]{Fan2011}
Fan, J. and Lv, J. (2011).
\newblock {Nonconcave penalized likelihood with NP-dimensionality}.
\newblock {\em IEEE Transactions on Information Theory}, 57:5467--5484.

\bibitem[Fan and Peng, 2004]{Fan2004}
Fan, J. and Peng, H. (2004).
\newblock Nonconcave penalized likelihood with a diverging number of
  parameters.
\newblock {\em Annals of Statistics}, 32:928--961.

\bibitem[Fan et~al., 2014]{Fan2014}
Fan, J., Xue, L., and Zou, H. (2014).
\newblock Strong oracle optimality of folded concave penalized estimation.
\newblock {\em Annals of Statistics}, 42:819--849.

\bibitem[Fan and Tang, 2013]{Fan2013}
Fan, Y. and Tang, C.~Y. (2013).
\newblock {Tuning Parameter Selection in High Dimensional Penalized
  Likelihood}.
\newblock {\em Journal of the Royal Statistical Society. Series B: Statistical
  Methodology}, 75(3):531--552.

\bibitem[Gandhi et~al., 2020]{Gandhi2020}
Gandhi, R.~T., Tashima, K.~T., Smeaton, L.~M., Vu, V., Ritz, J., Andrade, A.,
  Eron, J.~J., Hogg, E., and Fichtenbaum, C.~J. (2020).
\newblock {Long-term Outcomes in a Large Randomized Trial of HIV-1 Salvage
  Therapy: 96-Week Results of AIDS Clinical Trials Group A5241 (OPTIONS)}.
\newblock {\em Journal of Infectious Diseases}, 221:1407--1415.

\bibitem[Jacobson and Zou, 2023]{Jacobson2023}
Jacobson, T. and Zou, H. (2023).
\newblock Linear hypothesis testing for high dimensional tobit models.
\newblock {\em Statistica Sinica}.

\bibitem[Javanmard and Montanari, 2014]{Javanmard2014}
Javanmard, A. and Montanari, A. (2014).
\newblock {Confidence Intervals and Hypothesis Testing for High-Dimensional
  Regression}.
\newblock {\em Journal of Machine Learning Research}, 15:2869--2909.

\bibitem[Negahban et~al., 2012]{Negahban2012}
Negahban, S.~N., Ravikumar, P., Wainwright, M.~J., and Yu, B. (2012).
\newblock {A unified framework for high-dimensional analysis of M-estimators
  with decomposable regularizers}.
\newblock {\em Statistical Science}, 27:538--557.

\bibitem[Ning and Liu, 2017]{Ning2017}
Ning, Y. and Liu, H. (2017).
\newblock A general theory of hypothesis tests and confidence regions for
  sparse high dimensional models.
\newblock {\em Annals of Statistics}, 45:158--195.

\bibitem[Schwarz, 1978]{Schwarz1978}
Schwarz, G. (1978).
\newblock {Estimating the Dimension of a Model}.
\newblock {\em Annals of Statistics}, 6(2):461--464.

\bibitem[Shi et~al., 2019]{Shi2019}
Shi, C., Song, R., Chen, Z., and Li, R. (2019).
\newblock Linear hypothesis testing for high dimensional generalized linear
  models.
\newblock {\em Annals of Statistics}, 47:2671--2703.

\bibitem[Tibshirani, 1996]{Tibshirani1996}
Tibshirani, R. (1996).
\newblock {Regression Shrinkage and Selection via the Lasso}.
\newblock {\em Journal of the Royal Statistical Society, Series B},
  58(1):267--288.

\bibitem[{Van de Geer} et~al., 2014]{VanDeGeer2014}
{Van de Geer}, S., B{\"{u}}hlmann, P., Ritov, Y., and Dezeure, R. (2014).
\newblock {On Asymptotically Optimal Confidence Regions and Tests for
  High-Dimensional Models}.
\newblock {\em Annals of Statistics}, 42(3):1166--1202.

\bibitem[Wang and Cui, 2014]{Wang2014}
Wang, S. and Cui, H. (2014).
\newblock Partial penalized likelihood ratio test under sparse case.
\newblock {\em arXiv preprint}.

\bibitem[Wang and Cui, 2017]{Wang2017}
Wang, S. and Cui, H. (2017).
\newblock Partial penalized empirical likelihood ratio test under sparse case.
\newblock {\em Acta Mathematicae Applicatae Sinica, English Series},
  33:327--344.

\bibitem[Zhang, 2010]{Zhang2010}
Zhang, C.~H. (2010).
\newblock {Nearly Unbiased Variable Selection Under Minimax Concave Penalty}.
\newblock {\em Annals of Statistics}, 38(2):894--942.

\bibitem[Zhang and Zhang, 2014]{Zhang2014}
Zhang, C.-H. and Zhang, S. (2014).
\newblock {Confidence Intervals for Low Dimensional Parameters in High
  Dimensional Linear Models}.
\newblock {\em Journal of the Royal Statistical Society. Series B: Statistical
  Methodology}, 76(1):217--242.

\bibitem[Zou and Li, 2008]{Zou2008}
Zou, H. and Li, R. (2008).
\newblock One-step sparse estimates in nonconcave penalized likelihood models.
\newblock {\em Annals of Statistics}, 36:1509--1533.

\end{thebibliography}

\appendix

	\section{Computational Details} \label{sec:implementation details}
	In this appendix, we discuss computation of the partial penalized estimators in greater detail.
	We describe how we compute the LLA updates in Algorithms \ref{algo:LLA for reduced estimator} and \ref{algo:LLA for full estimator} and how we select the penalty parameters $\lambda_{\lasso}$ and $\lambda$ in practice.
	We focus on computation of the reduced model estimator \eqref{eqn:Shi reduced}, as a similar approach works for the full model estimator \eqref{eqn:Shi full}.
	
	\subsection{ADMM-within-LLA}
	As we noted in Section \ref{sec:lla algorithms}, because the weighted lasso problem \eqref{algo:reduced model lasso update} in Algorithm \ref{algo:LLA for reduced estimator} includes linear constraints, it cannot be solved with existing algorithms for the lasso.  
	We use an ADMM approach to solve \eqref{algo:reduced model lasso update}, as it enables us to enforce those constraints.
	We introduce $\evec \in \reals^{p-m}$ to rewrite \eqref{algo:reduced model lasso update} as 
	\begin{equation}
		(\hat \bvec_0^{(b)}, \hat \evec) = \argmin_{\bvec,\evec} \ell_n(\bvec) + \sum_{j = 1}^{p-m} \hat w_{j+m}^{(b-1)}|\eta_j| \text{ \; subject to \; } \Cmat \bvec_{\Mset} = \bm{t} \text{, } \bvec_{\Mset^c} = \evec. \label{eqn:dual objective}
	\end{equation}
	The augmented Lagrangian for \eqref{eqn:dual objective} can be expressed as
	\begin{align*}
		L_{\rho}(\bvec, \evec, \nvec) 
		& = \ell_n(\bvec) + \sum_{j = 1}^{p-m} \hat w_{j+m}^{(b-1)}|\eta_j|
		+ \frac{\rho}{2}  \norm{ \Cmat \bvec_\Mset - \bm{t} + \frac{\nvec_1}{\rho} }_2^2 \\
		& \hspace*{0.4cm} + \frac{\rho}{2} \norm{ \bvec_{\Mset^c} - \evec + \frac{\nvec_2}{\rho} }_2^2 
		- \frac{\rho}{2} \norm{\frac{\nvec}{\rho}}_2^2, \label{eqn:scaled lagrangian}
	\end{align*}
	with penalty parameter $\rho > 0$ and dual variables $\nvec = (\nvec_1', \nvec_2')' \in \reals^{r+p-m}$.
	Applying the dual ascent method, we arrive at Algorithm \ref{algo:ADMM} to solve \eqref{eqn:dual objective}.
	
	\spacingset{1.25}
	\begin{algorithm}
		\caption{ADMM algorithm for reduced model LLA updates \eqref{eqn:dual objective}}
		\label{algo:ADMM}
		\begin{algorithmic}
			\State Initialize $(\bvec^{[0]}, \evec^{[0]}, \nvec^{[0]})$, $k = 1$
			
			\Repeat
			\State $\bvec^{[k]} = \argmin_{\bvec} \ell_n(\bvec)
			+ \frac{\rho}{2}  \norm{ \Cmat \bvec_\Mset - \bm{t} + \frac{\nvec_1^{[k-1]}}{\rho} }_2^2
			+ \frac{\rho}{2} \norm{ \bvec_{\Mset^c} - \evec^{[k-1]} + \frac{\nvec_2^{[k-1]}}{\rho} }_2^2 $
			
			\State $\evec^{[k]} = \argmin_{\evec} \sum_{j = 1}^{p-m} \hat w_{j+m}^{(b-1)}|\eta_j|
			+ \frac{\rho}{2} \norm{ \bvec_{\Mset^c}^{[k]} - \evec + \frac{\nvec_2^{[k-1]}}{\rho} }_2^2 $
			
			\State $\nvec_1^{[k]} = \nvec_1^{[k-1]} + \rho ( \Cmat \bvec_\Mset^{[k]} - \bm{t})$
			
			\State $\nvec_2^{[k]} = \nvec_2^{[k-1]} + \rho ( \bvec_{\Mset^c}^{[k]}  - \evec^{[k]} )$
			
			\State $k = k+1$
			\Until{primal and dual residuals are sufficiently small}
			
			\State $\hat \bvec_0^{(b)} = \bvec^{[k]}$
			\State $\hat \evec = \evec^{[k]}$
		\end{algorithmic}
	\end{algorithm}
	\spacingset{1.45}
	
	\subsection{Choosing $\lambda_{\lasso}$ and $\lambda$}
	We initialize Algorithm \ref{algo:LLA for reduced estimator} with the $\ell_1$-penalized estimator \eqref{eqn:glm:lasso initial estimator}, as is supported by our theoretical results. 
	We need $\norm{\hat{\bvec}_{0, \Mset^c}^{\lasso} - \bvec_{\Mset^c}^*}_{\max} \leq a_0 \lambda$ for the LLA algorithm to converge to the oracle estimator in two steps.
	In other words, we want to minimize the estimation error of the initial lasso estimator.
	We approximate this in practice by computing $\hat \bvec^{\lasso}$ for a sequence of $\lambda_{\lasso}$ values and selecting the $\lambda_{\lasso}$ value which minimizes the estimated prediction error in 10-fold cross validation.
	
	Having selected $\lambda_{\lasso}$, we solve \eqref{eqn:Shi reduced} for a sequence of $\lambda$ values.
	Let $\hat{\bvec}_0^{\lambda}$ denote the LLA solution with penalty parameter $\lambda$.
	We pick $\lambda$ based on the following information criterion:
	\begin{equation}
		\hat{\lambda} =  \argmin_{\lambda} \, n \, \ell_n(\hat{\bvec}_0^{\lambda}) + c_n \norm{\hat{\bvec}_0^{\lambda}}_0 \label{eqn:GIC}
	\end{equation}
	where $c_n = \max\{ \log n, \log(\log n) \log p \}$. 
	\cite{Fan2013} and \cite{Schwarz1978} show that penalty coefficients of $\log(\log n)\log p$ and $\log n$ guarantee consistent model selection for GLMs in the ultra-high dimensional and fixed-$p$ settings, respectively. 
	Similar arguments to theirs could be used to extend these guarantees to partial penalized estimators.
	
	\subsection{Estimating $\phi^*$ in linear regression models}
	In a linear regression model, $\phi^* = {\sigma}^2$, the variance of the additive error term $\varepsilon$ in $y  = \xvec'\bvec^* + \varepsilon$.
	In practice, We estimate $\phi^*$ with $\hat{\phi}_0 = \frac{1}{n - |\hat S (\hat \bvec_0) | - |\Mset| - 1 }\sum_{i=1}^{n} (y_i - \xvec_i'\hat \bvec_0)^2 $ for the reduced model and
	$\hat{\phi}_a = \frac{1}{n - |\hat S (\hat \bvec_a) | - |\Mset|- 1 }\sum_{i=1}^{n} (y_i - \xvec_i'\hat \bvec_a)^2 $ for the full model.
	Theorem \ref{thm:lla test statistic distribution} requires that $|\hat \phi - \phi^*| = o_p(1)$.
	\cite{Shi2019} prove that $\hat{\phi}_a = \phi^* + o_p(1)$ in Section S2 of their supplementary material.
	A similar argument to theirs can be used to show that $\hat{\phi}_0 = \phi^* + o_p(1)$ under the conditions of Theorem \ref{thm:two step LLA asymptotics}.

	\section{Additional Simulations}
	As we have noted, the use of a folded-concave penalty in the partial penalized likelihood \eqref{eqn:partial penalized likelihood} makes \eqref{eqn:Shi reduced} and \eqref{eqn:Shi full} potentially non-convex problems with multiple local minima.
	In these simulations, we examine whether our LLA algorithms and the ADMM algorithms proposed by \cite{Shi2019} arrive at different local solutions in practice.
	
	We generate the response from a linear model $y_i = \xvec_i' \bvec^* + \varepsilon_i$ for $i = 1, \ldots, n$, where $\varepsilon_i \sim N(0, 1)$ and $\xvec_i \sim N(\bm{0}, \bm \Sigma)$ is a $p$-dimensional vector of predictors
	with $\Sigma_{jk} = 0.5^{|j-k|}$ for $j,k = 1, \ldots, p$. 
	We set the data-generating coefficient vector to be $\bvec^* = (3, 1.5, 0, 0, 2, \bm 0_{p-5})$. 
	We run simulations with both $p = 50$ and $p = 200$, generating $n = 100$ observations per dataset in both settings.
	
	For each simulated dataset, we compute the reduced and full model estimators for testing $\text{H}_0: \beta_3^* = 0$ using our two-step LLA solutions and \citeauthor{Shi2019}'s ADMM algorithms.
	For each method, we compute the partial penalized estimators for a sequence of $\lambda$ values and select a final $\hat \lambda$ using the information criterion \eqref{eqn:GIC}.
	To compare the two-step LLA and ADMM solutions, we calculate the $\ell_1$ loss $\lVert \hat{\bvec} - \bvec^* \rVert_1$ and $\ell_2$ loss $\lVert \hat{\bvec} - \bvec^* \rVert_2$ as well as the number of false positive (\#FP) and false negative (\#FN) variable selections.
	
	Table \ref{tbl:linear estimation sims} reports the mean of each metric over 500 simulation replications, with its standard error given in parentheses.
	We see that the $\ell_1$ and $\ell_2$ losses for the LLA and ADMM solutions differ from each other for both the full and reduced models.
	Likewise, the LLA and ADMM solutions deliver different numbers of false positive and false negative variable selections.
	These results illustrate that the LLA and ADMM algorithms can arrive at different solutions in practice, underscoring the importance of developing theory for specific local solutions to \eqref{eqn:Shi reduced} and \eqref{eqn:Shi full}.
	
	\spacingset{1}
	\begin{table}[t]
		\centering
		\caption{LLA and ADMM Estimator Comparison}\label{tbl:linear estimation sims}
		\begin{tabular}{lllll}
			\toprule
			Method & $\ell_1$ loss & $\ell_2$ loss & \#FP & \#FN\\
			\midrule
			\addlinespace[0.3em]
			& \multicolumn{4}{c}{$p = 50$}\\
			\cline{2-5}
			LLA (full) & 3.25 (0.03) & 3.07 (0.05) & 1 (0) & 0.4 (0.02)\\
			ADMM (full) & 3.28 (0.03) & 3.14 (0.05) & 1 (0) & 0.41 (0.02)\\
			LLA (reduced) & 2.08 (0.03) & 1.66 (0.04) & 0.48 (0.02) & 0.06 (0.01)\\
			ADMM (reduced) & 2.13 (0.03) & 1.75 (0.04) & 0.35 (0.02) & 0.07 (0.01)\\
			\addlinespace[0.3em]
			& \multicolumn{4}{c}{$p = 200$}\\
			\cline{2-5}
			LLA (full) & 3.25 (0.03) & 3.07 (0.05) & 1 (0) & 0.43 (0.02)\\
			ADMM (full) & 3.27 (0.03) & 3.13 (0.05) & 1 (0) & 0.44 (0.02)\\
			LLA (reduced) & 2.07 (0.03) & 1.67 (0.04) & 0.38 (0.02) & 0.08 (0.01)\\
			ADMM (reduced) & 2.11 (0.03) & 1.75 (0.04) & 0.48 (0.02) & 0.09 (0.01)\\
			\bottomrule
		\end{tabular}
	\end{table}
	\spacingset{1.45}

	\section{Testing Antiretroviral Treatments for HIV}\label{sec:hiv data}
	As an application, we analyze data from OPTIONS \citep{Gandhi2020}, a partially randomized trial of antiretroviral therapies for people with human immunodeficiency virus (HIV) (the data are available from the Stanford HIV Drug Resistance Database at \url{https://hivdb.stanford.edu/pages/clinicalStudyData/ACTG5241.html}).
	The trial examined whether adding or omitting nucleoside reverse transcriptase inhibitors (NRTIs) from the treatment regimens of patients who are failing protease inhibitor-based treatment impacts the likelihood of subsequent virological failure.
	The researchers developed an optimized treatment regimen for each participant based on their treatment history and drug resistance.
	Participants with moderate drug resistance were randomly assigned to either add or omit NRTIs from their treatment regimens while participants with high drug resistance were all given treatment regimens which included NRTIs.
	
	Participants were regularly evaluated over the course of 96 weeks, with researchers measuring viral load (copies/mL HIV RNA) to determine whether virological failure had occurred.
	A participant was said to have experienced virological failure if (1) their $log_{10}$ viral load decreased by less than 1 by week 12, (2) their viral load exceeded 200 copies/mL at any point after dropping below 200 copies/mL, (3) their viral load had never been suppressed to less than 200 copies/mL by week 24, or (4) their viral load exceeded 200 copies/mL at or after week 48.
	Virological failure was confirmed with repeated measurement.
	By week 96, 361 participants remained in the trial.
	Of the remaining participants, 128 had experienced virological failure based on the criteria above.
	
	The outcome of interest is whether virological failure occurred at any point within the 96 week study period.
	We model this binary outcome using logistic regression.
	The predictors in our model include the antiretroviral medications assigned to the participants at the start of the trial, the week in which the participants started these medications, protease and reverse transcriptase gene mutations in the participants' HIV infections, and HIV subtype.
	Because participants were assigned personalized regimens of several medications, a separate indicator variable is used to denote the inclusion/exclusion of each medication from a participant's regimen.
	The data contain $n = 361$ observations and $p = 1225$ predictors.
	
	We test whether the NRTIs included in some patients' initial treatment regimens are significant predictors of virological failure.
	The participants' treatment regimens could include the following five NRTIs: AZT, 3TC, FTC, ABC, and TDF.
	Let $\bvec_{\Mset}^*$ denote the subvector of coefficients for the inclusion/exclusion indicators for these five NRTIs. 
	We test H$_0: \bvec_{\Mset}^* = \bm 0$ at significance level $0.05$ using the partial penalized likelihood ratio, Wald, and score tests evaluated at the two-step LLA solutions.
	The p-values from these tests are $p_{\text{LRT}} = 0.847$, $p_{\text{Wald}} = 0.870$, and $p_{\text{score}} = 0.866$.
	As such, all three tests fail to reject the null hypothesis and we conclude that these NRTIs are not significant predictors of virological failure within 96 weeks of treatment assignment.
	
	\section{Theoretical Proofs}\label{sec:proofs}
	This appendix contains proofs of the results from the main paper.
	
	\subsection{Proofs of general LLA convergence results}
	\begin{proof}[Proof of Theorem \ref{thm:one step LLA}]
		We will focus on proving the result for Algorithm \ref{algo:LLA for reduced estimator} as the proof for Algorithm \ref{algo:LLA for full estimator} follows the same lines.
		
		Recall that $\hat{\bvec}_0^{(0)} = \hat{\bvec}_0^{\initial}$. Under \ref{assumption: min signal for betaS} and $\left\{ \norm{\hat{\bvec}_{0, \Mset^c}^{\initial} - \bvec_{\Mset^c}^*}_{\max} \leq a_0 \lambda \right\}$, we see that for $j \in \Sset$,
		$$ \left| \hat{\beta}_{0,j}^{(0)} \right| \geq \norm{\bvec_{\Sset}^*}_{\min} - \norm{\hat{\bvec}_{0, \Mset^c}^{\initial} - \bvec_{\Mset^c}^*}_{\max} > (a+1)\lambda - \lambda = a\lambda .$$
		Therefore by property \ref{assumption: penalty iv} of $p_{\lambda}(\cdot)$, $p_{\lambda}(|\hat{\beta}_{0,j}^{(0)}|) = 0$ for $j \in  \Sset$.
		As an immediate consequence, $\hat{\bvec}_0^{(1)}$ can be expressed as the solution to
		\begin{align}
			\hat{\bvec}_0^{(1)} 
			& = \argmin_{\bvec} \ell_n(\bvec) + \sum_{j \in \Mset^c} p_{\lambda}'(|\hat{\beta}_{0,j}^{(0)}|)|\beta_j| \text{ subject to } \Cmat \bvec_{\Mset} = \bm{t} \notag \\
			& = \argmin_{\bvec} \ell_n(\bvec) + \sum_{j \in (\Mset \cup \Sset)^c} p_{\lambda}'(|\hat{\beta}_{0,j}^{(0)}|)|\beta_j| \text{ subject to } \Cmat \bvec_{\Mset} = \bm{t} . \label{eqn:proof 1 beta01 reduced}
		\end{align}
		
		We will now show that $\hat{\bvec}_0^{\oracle}$ is the unique global solution to \eqref{eqn:proof 1 beta01 reduced} under the event $ \left\{ \norm{ \nabla_{(\Mset \cup \Sset)^c} \ell_n( \hat{\bvec}_0^{\oracle} )  }_{\max} < a_1 \lambda \right\} $. 
		Because $\ell_n(\bvec)$ is convex, we see that for all $\bvec \in \reals^p$
		\begin{align}
			\ell_n(\bvec) 
			& \geq \ell_n(\hat{\bvec}_0^{\oracle}) + \nabla \ell_n(\hat{\bvec}_0^{\oracle})'(\bvec - \hat{\bvec}_0^{\oracle}) \notag \\
			& \geq \ell_n(\hat{\bvec}_0^{\oracle}) + \nabla_{(\Mset \cup \Sset)^c} \ell_n(\hat{\bvec}_0^{\oracle})'(\bvec_{(\Mset \cup \Sset)^c} - \hat{\bvec}_{0,(\Mset \cup \Sset)^c}^{\oracle}) \notag\\
			& \hspace{0.4cm} + \nabla_{\Mset \cup \Sset} \ell_n(\hat{\bvec}_0^{\oracle})'(\bvec _{\Mset \cup \Sset}- \hat{\bvec}_{0,\Mset \cup \Sset}^{\oracle}) \notag\\
			& = \ell_n(\hat{\bvec}_0^{\oracle}) + \sum_{j \in (\Mset \cup \Sset)^c} \nabla_j \ell_n(\hat{\bvec}_0^{\oracle})(\beta_j - \hat{\beta}_{0,j}^{\oracle}) \notag\\
			& \hspace{0.4cm} + \nvec'\Cmat(\bvec_{\Mset} - \hat{\bvec}_{0,\Mset}^{\oracle}) \label{eqn:proof 1 elln lower bound}
		\end{align}
		for some $\nvec \in \reals^r$ by \ref{assumption:reduced oracle unique}.
		
		Let $\bvec$ satisfy $\Cmat\bvec_{\Mset} = \bm{t}$.
		Since $\Cmat \hat{\bvec}_{0,\Mset}^{\oracle} = \bm{t}$,
		we see that
		$
		\nvec'\Cmat(\bvec_{\Mset} - \hat{\bvec}_{0,\Mset}^{\oracle}) = \nvec'\bm{t} - \nvec'\bm{t} = \bm 0
		$.
		In addition, under \ref{assumption: min signal for betaS} and $\left\{ \norm{\hat{\bvec}_{0, \Mset^c}^{\initial} - \bvec_{\Mset^c}^*}_{\max} \leq a_0 \lambda \right\}$, we see that for $j \in (\Mset \cup \Sset)^c$
		$$ \left| \hat{\beta}_{0,j}^{(0)} \right| \leq \norm{\hat{\bvec}_{0, \Mset^c}^{\initial} - \bvec_{\Mset^c}^*}_{\max} \leq a_0 \lambda \leq a_2\lambda .$$
		As such, properties \ref{assumption: penalty ii} and \ref{assumption: penalty iii} of $p_{\lambda}(\cdot)$ provide that 
		$p_{\lambda}'(|\hat{\beta}_{0,j}^{(0)}|) \geq a_1 \lambda$ for $j \in (\Mset \cup \Sset)^c$.
		Combining these findings with \eqref{eqn:proof 1 elln lower bound} and the fact that $\hat{\bvec}_{0, (\Mset \cup \Sset)^c}^{\oracle} = \bm{0}$, we find	
		\begin{align*}
			\bigg\{ \ell_n(\bvec) + \sum_{j \in (\Mset \cup \Sset)^c} & p_{\lambda}'(|\hat{\beta}_{0,j}^{(0)}|)|\beta_j| \bigg\} - \bigg\{ \ell_n(\hat{\bvec}_0^{\oracle}) + \sum_{j \in (\Mset \cup \Sset)^c} p_{\lambda}'(|\hat{\beta}_{0,j}^{(0)}|)|\hat{\beta}_{0,j}^{\oracle}| \bigg\} \\
			& =  \ell_n(\bvec) - \ell_n(\hat{\bvec}_0^{\oracle}) +  \sum_{j \in (\Mset \cup \Sset)^c} p_{\lambda}'(|\hat{\beta}_{0,j}^{(0)}|)|\beta_j|\\
			& \geq \sum_{j \in (\Mset \cup \Sset)^c} \nabla_j \ell_n(\hat{\bvec}_0^{\oracle})(\beta_j - \hat{\beta}_{0,j}^{\oracle}) +  \sum_{j \in (\Mset \cup \Sset)^c} p_{\lambda}'(|\hat{\beta}_{0,j}^{(0)}|)|\beta_j| \\
			& = \sum_{j \in (\Mset \cup \Sset)^c}\bigg\{p_{\lambda}'(|\hat{\beta}_{0,j}^{(0)}|) + \nabla_j \ell_n(\hat{\bvec}_0^{\oracle})  \sgn(\beta_j) \bigg\}|\beta_j| \\
			& \geq \sum_{j \in (\Mset \cup \Sset)^c}\bigg\{a_1\lambda - \left|\nabla_j \ell_n(\hat{\bvec}_0^{\oracle}) \right|  \bigg\}|\beta_j|\\
			& \geq 0
		\end{align*}  
		where the final inequality follows from  $ \left\{ \norm{ \nabla_{(\Mset \cup \Sset)^c} \ell_n( \hat{\bvec}_0^{\oracle})  }_{\max} < a_1 \lambda \right\} $. The final inequality in the above expression is strict unless $\beta_j = 0$ for all $j \in (\Mset \cup \Sset)^c$. This combined with the fact that $\hat{\bvec}_0^{\oracle}$ is the unique solution to \eqref{eqn:reduced oracle} implies that $\hat{\bvec}_0^{\oracle}$ is the unique solution to \eqref{eqn:proof 1 beta01 reduced}. Therefore $\hat{\bvec}_0^{(1)} = \hat{\bvec}_0^{\oracle}$.
	\end{proof}
	
	\begin{proof}[Proof of Theorem \ref{thm:two step lla}]
		We will focus on proving the result for Algorithm \ref{algo:LLA for reduced estimator} as the proof for Algorithm \ref{algo:LLA for full estimator} follows similar lines.
		
		Suppose that the LLA algorithm for the reduced model estimator has found $\hat{\bvec}_0^{\oracle}$ in the $(b-1)$th step---that is, $\hat{\bvec}_0^{(b-1)} = \hat{\bvec}_0^{\oracle}$. We see that if $\norm{\hat{\bvec}_{0, \Sset}^{\oracle}}_{\min} > a\lambda$, then $p_{\lambda}(|\hat{\beta}_{0,j}^{\oracle}|) = 0$ for $j \in \Sset$ by property \ref{assumption: penalty iv} of $p_{\lambda}(\cdot)$. As such, $\hat{\bvec}_0^{(b)}$ can be expressed as the solution to
		\begin{align}
			\hat{\bvec}_0^{(b)} 
			& = \argmin_{\bvec} \ell_n(\bvec) + \sum_{j \in \Mset^c} p_{\lambda}'(|\hat{\beta}_{0,j}^{(b-1)}|)|\beta_j| \text{ subject to } \Cmat \bvec_{\Mset} = \bm{t} \notag \\
			& = \argmin_{\bvec} \ell_n(\bvec) + \sum_{j \in (\Mset \cup \Sset)^c} p_{\lambda}'(|\hat{\beta}_{0,j}^{(b-1)}|)|\beta_j| \text{ subject to } \Cmat \bvec_{\Mset} = \bm{t} . \label{eqn:proof 2 beta01 reduced}
		\end{align}
		Note that \eqref{eqn:proof 2 beta01 reduced} is almost identical to \eqref{eqn:proof 1 beta01 reduced} in the proof of Theorem \ref{thm:one step LLA}.
		Since $\hat{\bvec}_{0,(\Mset \cup \Sset)^c}^{\oracle} = \bm{0}$, we see that $p_{\lambda}'(|\hat{\beta}_{0,j}^{(b-1)}|) = p_{\lambda}'(0) \geq a_1 \lambda$ for $j \in (\Mset \cup \Sset)^c$ by condition \ref{assumption: penalty ii}.
		As such, by the same argument as in the proof of Theorem \ref{thm:one step LLA} if $\left\{ \norm{ \nabla_{(\Mset \cup \Sset)^c} \ell_n( \hat{\bvec}_0^{\oracle} )  }_{\max} < a_1 \lambda \right\}$, then $\hat{\bvec}_0^{\oracle}$ is the unique solution to \eqref{eqn:proof 2 beta01 reduced} and therefore $\hat{\bvec}_0^{(b)} = \hat{\bvec}_0^{\oracle}$.
	\end{proof}
	
	\subsection{Proofs of LLA convergence results for GLMs}
	\begin{proof}[Proof of Theorem \ref{thm:glm lla covergence prob}]
		We focus on proving $\delta_{01}^{\text{GLM}} \to 0$ and  $\delta_{02}^{\text{GLM}} \to 0$ as the proof for the full model case follows the same lines.\\
		
		\noindent \textbf{Part 1.}
		We start by proving that $\delta_{02}^{\text{GLM}} \to 0$.
		We define $\tilde{\bvec}^*$ as follows:
		\begin{equation*}
			\begin{cases}
				\tilde{\bvec}^*_{\Mset} = \bvec^*_{\Mset} - \Cmat'(\Cmat \Cmat')^{-1} \bm{h}_n\\
				\tilde{\bvec}^*_{\Mset^c} = \bvec^*_{\Mset^c}.
			\end{cases}
		\end{equation*}
		We see that under \ref{assumption:hn order},
		$\Cmat \tilde{\bvec}^*_{\Mset} - \bm t = \Cmat\bvec^*_{\Mset} - \Cmat \Cmat'(\Cmat \Cmat')^{-1} \bm{h}_n - \bm t = \Cmat \bvec^*_{\Mset} - \bm{h}_n - \bm t = \bm 0$.
		We further see that under \ref{assumption:hn order},
		$ \norm{ \tilde{\bvec}^*_{\Mset} - \bvec^*_{\Mset} }_2^2
		= \norm{ \Cmat'(\Cmat \Cmat')^{-1} \bm{h}_n }_2^2
		= \bm{h}_n'  (\Cmat \Cmat')^{-1} \Cmat \Cmat'(\Cmat \Cmat')^{-1} \bm{h}_n 
		= \bm{h}_n'  (\Cmat \Cmat')^{-1} \bm{h}_n
		\leq \norm{\bm{h}_n}_2^2 \lambda_{\max} \{(\Cmat \Cmat')^{-1}\}
		= O( \norm{\bm{h}_n}_2^2)
		= O\left( \frac{s+m-r}{n} \right) $.
		Since $\tilde{\bvec}^*_{\Mset^c} = \bvec^*_{\Mset^c}$, this implies that $\norm{ \tilde{\bvec}^*_{\Mset \cup \Sset} - \bvec^*_{\Mset \cup \Sset} }_2 = O\left( \sqrt{\frac{s+m-r}{n}} \right)$.
		
		If $\bvec$ satisfies $\Cmat \bvec_{\Mset} = \bm t$, then $\Cmat(\bvec_{\Mset} - \tilde{\bvec}^*_{\Mset}) = \bm t - \bm t = \bm 0$, meaning that $\bvec_{\Mset} - \tilde{\bvec}^*_{\Mset}$ belongs to the null space of $\Cmat$.
		Let $\mathbf{Z}$ be a basis matrix for the null space of $\Cmat$ with orthonormal columns, so that $\mathbf{Z}'\mathbf{Z} = \mathbf{I}_{m-r}$.
		Then for any $\bvec$ satisfying $\Cmat \bvec_{\Mset} = \bm t$ there exists a unique $\bm v \in \reals^{m-r}$ such that $\bvec_{\Mset} - \tilde{\bvec}^*_{\Mset} = \mathbf{Z} \bm v$.
		For any $\evec \in \reals^{s+m-r}$, we define the affine transformation $\bvec(\evec)$ by
		\begin{equation*}
			\begin{cases}
				\bvec(\evec)_{\Mset} = \tilde{\bvec}^*_{\Mset} + \mathbf{Z} \evec_{\widetilde{\Mset}}\\
				\bvec(\evec)_{\Sset} = \tilde{\bvec}^*_{\Sset} + \evec_{\widetilde{\Mset}^c}\\
				\bvec(\evec)_{(\Mset \cup \Sset)^c} = \tilde{\bvec}^*_{(\Mset \cup \Sset)^c}
			\end{cases}
		\end{equation*}
		where $\widetilde{\Mset} = \{1, \ldots, m-r\}$.
		We reorder our indices so that $\Mset = \{1, \ldots, m\}$ and $\Sset = \{m+1, \ldots, m+s\}$ and define 
		$\widetilde{\mathbf{Z}} 
		= \begin{bsmallmatrix}
			\mathbf{Z} & \bm 0 \\ 
			\bm 0 & \mathbf{I}_s
		\end{bsmallmatrix}$, 
		so that 
		$\bvec(\evec) 
		= \begin{bsmallmatrix}
			\widetilde{\mathbf{Z}} \evec \\ 
			\bm 0
		\end{bsmallmatrix}$.
		We further define $\bar{\ell}_n(\evec) = \ell_n(\bvec(\evec))$.
		We see that $\forall \bm v \in \reals^{m-r}$, $\norm{\mathbf{Z} \bm v}_2^2 = \bm v' \mathbf{Z}' \mathbf{Z} \bm v = \bm v' \bm v = \norm{\bm v}_2^2$.
		Therefore $\norm{ \bvec(\evec)_{\Mset \cup \Sset} - \tilde{\bvec}^*_{\Mset \cup \Sset} }_2^2 = \norm{ \widetilde{\mathbf{Z}}\evec }_2^2 = \norm{\mathbf{Z}\evec_{\widetilde{\Mset}}}_2^2 + \norm{\evec_{\widetilde{\Mset^c}}}_2^2 = \norm{\evec}_2^2$.
		
		For $\tau > 0$, define $\mathcal{N}_{\tau} = \left\{ \evec \in \reals^{s+m-r}: \norm{\evec}_2 \leq \tau \sqrt{\frac{s+m-r}{n}} \right\} $ and consider the event
		$$
		\mathcal{E}_n = \left\{ \bar{\ell}_n(\bm 0) < \min_{\eta \in \partial \mathcal{N}_{\tau}} \bar{\ell}_n(\evec) \right\}
		$$
		where $\partial \mathcal{N}_{\tau}$ denotes the boundary of $\mathcal{N}_{\tau}$. Suppose that $\mathcal{E}_n$ holds. Then by the extreme value theorem there exists a local minimizer $\hat{\evec}$ of $\bar{\ell}_n(\evec)$ in $\mathcal{N}_{\tau}$. 
		This, in turn, implies that $\hat{\bvec} = \bvec(\hat{\evec})$ is a local minimizer of \eqref{eqn:reduced oracle}
		with the property that
		$ \norm{ \hat{\bvec}_{\Mset \cup \Sset} - \tilde{\bvec}^*_{\Mset \cup \Sset} }_2 \leq \tau \sqrt{\frac{s+m-r}{n}}$.
		Because $\ell_n(\bvec)$ is convex and the constraints in \eqref{eqn:reduced oracle} are affine, $\hat{\bvec}$ is a global minimizer of \eqref{eqn:reduced oracle}. 
		Since $\hat{\bvec}^{\oracle}_0$ is the unique global minimizer of \eqref{eqn:reduced oracle} under \ref{assumption:reduced oracle unique}, this implies that $\hat{\bvec} = \hat{\bvec}^{\oracle}_0$ and, by extension,
		$ \norm{ \hat{\bvec}_{0,\Mset \cup \Sset}^{\oracle} - \tilde{\bvec}^*_{\Mset \cup \Sset} }_2 \leq \tau \sqrt{\frac{s+m-r}{n}}$.
		Our aim, therefore, is to show that $\Pr(\mathcal{E}_n) \to 1$ as $n \to \infty$ for large $\tau$.
		
		Let $\tau \leq \sqrt{\log n}/2$ and $\evec \in \partial \mathcal{N}_{\tau}$. By a second order Taylor expansion, we have
		\begin{equation}
			\bar{\ell}_n(\evec) = \bar{\ell}_n(\bm 0) + \evec'\nabla\bar{\ell}_n(\bm 0) + \frac{1}{2} \evec' \nabla^2\bar{\ell}_n(\tilde{\evec}) \evec \label{eqn:glm:taylor expansion of bar ell}
		\end{equation}
		where $\tilde{\evec}$ lies on the line segment between $\bm 0$ and $\evec$. Clearly $\tilde{\evec} \in \mathcal{N}_{\tau}$.
		As such, we see that if $n$ is sufficiently large, then
		\begin{align*}
			\norm{ \bvec(\tilde \evec)_{\Mset \cup \Sset} - \bvec^*_{\Mset \cup \Sset}  }_2
			& \leq \norm{ \bvec(\tilde \evec)_{\Mset \cup \Sset} - \tilde{\bvec}^*_{\Mset \cup \Sset}  }_2 + \norm{ \tilde{\bvec}^*_{\Mset \cup \Sset} - \bvec^*_{\Mset \cup \Sset}  }_2\\
			& \leq \tau \sqrt{\frac{s+m-r}{n}} + O\left( \sqrt{\frac{s+m-r}{n}} \right)\\
			& \leq \tau \sqrt{\frac{s+m-r}{n}} + \frac{1}{2} \sqrt{\frac{(s+m)\log n}{n}}\\
			& \leq \sqrt{\frac{(s+m)\log n}{n}},
		\end{align*}
		meaning that $\bvec(\tilde \evec) \in \mathcal{N}_0$.
		Applying the chain rule, we see that for any $\evec$, $\nabla\bar{\ell}_n(\evec) = \widetilde{\mathbf{Z}} ' \nabla_{\Mset \cup \Sset} \ell_n(\bvec(\evec))$ and $\nabla^2\bar{\ell}_n(\evec) = \widetilde{\mathbf{Z}} ' \nabla_{\Mset \cup \Sset}^2 \ell_n(\bvec(\evec)) \widetilde{\mathbf{Z}}$.
		We have already established that $\norm{ \widetilde{\mathbf{Z}}\evec }_2 = \norm{\evec}_2$ for any $\evec$.
		Suppose that $n$ is large enough that $\bvec(\tilde{\evec}) \in \mathcal{N}_0$.
		Then we have
		\begin{align*}
			\evec' \nabla^2\bar{\ell}_n(\tilde{\evec}) \evec
			& = \evec' \widetilde{\mathbf{Z}}'\nabla_{\Mset \cup \Sset}^2 \ell_n(\bvec(\tilde \evec)) \widetilde{\mathbf{Z}} \evec \\
			& \geq \norm{ \widetilde{\mathbf{Z}} \evec }_2^2 \lambda_{\min} \left\{ \nabla_{\Mset \cup \Sset}^2 \ell_n(\bvec(\tilde \evec)) \right\} \\
			& = \norm{ \evec }_2^2 \lambda_{\min} \left\{ \frac{1}{n} \xmat_{\Mset \cup \Sset} \bm \Sigma (\xmat \bvec(\tilde{\evec}) ) \xmat_{\Mset \cup \Sset} \right\}\\
			& \geq \norm{ \evec }_2^2 c
		\end{align*}
		where the first inequality follows from the min-max theorem and the final inequality follows from \ref{assumption:glm:min hessian}.
		
		Returning to \eqref{eqn:glm:taylor expansion of bar ell}, we see that by the Cauchy-Schwarz inequality
		\begin{align*}
			\bar{\ell}_n(\evec) 
			& = \bar{\ell}_n(\bm 0) + \evec'\nabla\bar{\ell}_n(\bm 0) + \frac{1}{2} \evec' \nabla^2\bar{\ell}_n(\tilde{\evec}) \evec \\
			& \geq \bar{\ell}_n(\bm 0) - \norm{\evec}_2 \norm{ \nabla\bar{\ell}_n(\bm 0) }_2 + \frac{1}{2} \norm{\evec}_2^2 c\\
			& = \bar{\ell}_n(\bm 0) + \tau \sqrt{\frac{s+m-r}{n}}\left(- \norm{ \nabla\bar{\ell}_n(\bm 0) }_2 + \frac{c \tau}{2} \sqrt{\frac{s+m-r}{n}} \right) .
		\end{align*}
		Therefore $\bar{\ell}_n(\evec) > \bar{\ell}_n(\bm 0)$ for all $\evec \in \partial \mathcal{N}_{\tau}$---and therefore $\mathcal{E}_n$ holds---if $n$ is sufficiently large and the following event holds:
		\begin{equation}
			\norm{ \nabla\bar{\ell}_n(\bm 0) }_2 < \frac{c \tau}{2} \sqrt{\frac{s+m-r}{n}}.\label{eqn:glm:gradient of ell bar bound}
		\end{equation}
		Suppose that
		\begin{equation}
			\E\left[ \norm{ \nabla\bar{\ell}_n(\bm 0) }_2^2 \right] = O\left(\frac{s+m-r}{n}\right) \label{eqn:glm:expectation of gradient of ell bar bound}
		\end{equation}
		Then Markov's inequality yields
		\begin{align}
			\Pr\left( \norm{ \nabla\bar{\ell}_n(\bm 0) }_2 < \frac{c \tau}{2} \sqrt{\frac{s+m-r}{n}} \right)
			& > 1 - \frac{\E\left[ \norm{ \nabla\bar{\ell}_n(\bm 0) }_2^2 \right] 4n}{c^2 \tau^2 (s+m-r)} \notag \\
			& = 1 - O(\tau^{-2}) .\label{eqn:glm:markov for gradient}
		\end{align}
		
		Let $\tau_n$ be a diverging sequence such that $\tau_n \leq \sqrt{\log n}/2$ and $\tau_n \sqrt{\frac{s+m}{n}} = o(\lambda)$. 
		As we have shown, $\mathcal{E}_n$ implies that $ \norm{ \hat{\bvec}_{0,\Mset \cup \Sset}^{\oracle} - \tilde{\bvec}^*_{\Mset \cup \Sset} }_2 \leq \tau_n \sqrt{\frac{s+m-r}{n}}$.
		Consequently, \eqref{eqn:glm:markov for gradient} provides that with probability at least $1 - O(\tau_n^{-2})$, 
		$
		\norm{ \hat{\bvec}_{0,\Mset \cup \Sset}^{\oracle} - \bvec^*_{\Mset \cup \Sset} }_2 
		\leq \norm{ \hat{\bvec}_{0,\Mset \cup \Sset}^{\oracle} - \tilde{\bvec}^*_{\Mset \cup \Sset} }_2 + \norm{\tilde{\bvec}^*_{\Mset \cup \Sset} - \bvec^*_{\Mset \cup \Sset} }_2 
		\leq \tau_n \sqrt{\frac{s+m-r}{n}}  + O\left(\sqrt{\frac{s+m-r}{n}}\right)
		= o(\lambda)
		< \lambda 
		$
		for sufficiently large $n$, under \ref{assumption:rate of lambda}.
		Moreover, if $\norm{ \hat{\bvec}_{0,\Mset \cup \Sset}^{\oracle} - \bvec^*_{\Mset \cup \Sset} }_2 < \lambda$, then by \ref{assumption: min signal for betaS},
		$
		\norm{\hat{\bvec}^{\oracle}_{0,\Sset}}_{\min}
		\geq \norm{\bvec^*_{\Sset}}_{\min} - \norm{ \hat{\bvec}_{0,\Mset \cup \Sset}^{\oracle} - \bvec^*_{\Mset \cup \Sset} }_{\max} > (a+1)\lambda - \lambda = a \lambda .
		$
		Therefore we see that $$\delta_{02}^{\text{GLM}} = \Pr\left(\norm{\hat{\bvec}_{0, \Sset}^{\oracle}}_{\min} \leq a\lambda \right) \leq O(\tau_n^{-2}) \to 0$$ as $n \to \infty$ and this part of the proof is complete.
		
		All that remains is to prove \eqref{eqn:glm:expectation of gradient of ell bar bound}.
		Note that $\nabla\bar{\ell}_n(\bm 0) = \widetilde{\mathbf{Z}}' \nabla\ell_n( \tilde{\bvec}^*  )$. 
		Leveraging the properties of the Fisher Information matrix for exponential families, we find
		\begin{align}
			\E\left[ \norm{ \nabla\bar{\ell}_n(\bm 0) }_2^2 \right]
			& = \E\left[ \norm{ \widetilde{\mathbf{Z}}' \nabla\ell_n(\tilde{\bvec}^* ) }_2^2 \right] \notag \\
			& = \E\left[ (\nabla\ell_n(\tilde{\bvec}^*))' \widetilde{\mathbf{Z}} \widetilde{\mathbf{Z}}' \nabla\ell_n(\tilde{\bvec}^*) \right] \notag \\
			& = \tr \left\{ \widetilde{\mathbf{Z}}' \E \left[ \nabla\ell_n(\tilde{\bvec}^*) (\nabla\ell_n(\tilde{\bvec}^*))' \right] \widetilde{\mathbf{Z}} \right\} \notag \\
			& = \frac{\phi^*}{n^2} \tr \left\{ \widetilde{\mathbf{Z}}' \xmat_{\Mset \cup \Sset}'\bm{\Sigma}(\xmat \tilde{\bvec}^*)\xmat_{\Mset \cup \Sset} \widetilde{\mathbf{Z}} \right\} \notag \\
			& = \frac{\phi^*}{n^2} \tr \left\{ \widetilde{\mathbf{Z}}' \xmat_{\Mset \cup \Sset}'\bm{\Sigma}(\xmat \bvec^*)\xmat_{\Mset \cup \Sset} \widetilde{\mathbf{Z}} \right\} \notag \\ 
			& \hspace{0.5cm} + \frac{\phi^*}{n^2} \tr \left\{ \widetilde{\mathbf{Z}}' \xmat_{\Mset \cup \Sset}'\left[ \bm{\Sigma}(\xmat \tilde{\bvec}^*) - \bm{\Sigma}(\xmat \bvec^*) \right]\xmat_{\Mset \cup \Sset} \widetilde{\mathbf{Z}} \right\} \notag \\
			& = I_1 + I_2. \label{eqn:glm:I1 and I2}
		\end{align}
		Recall that  $\norm{ \widetilde{\mathbf{Z}}\bm v }_2 = \norm{\bm v}_2$ for any $\bm v \in \reals^{s+m-r}$.
		As such, we see that
		\begin{align*}
			I_1 
			& \leq \frac{(s+m-r)\phi^*}{n^2} \lambda_{\max} \left\{ \widetilde{\mathbf{Z}}' \xmat_{\Mset \cup \Sset}'\bm{\Sigma}(\xmat \bvec^*)\xmat_{\Mset \cup \Sset} \widetilde{\mathbf{Z}} \right\} \\
			& = \frac{(s+m-r)\phi^*}{n^2} \max_{\norm{\bm v}_2 = 1} \bm v' \widetilde{\mathbf{Z}}' \xmat_{\Mset \cup \Sset}'\bm{\Sigma}(\xmat \bvec^*)\xmat_{\Mset \cup \Sset} \widetilde{\mathbf{Z}} \bm v\\
			& \leq \frac{(s+m-r)\phi^*}{n^2} \max_{\norm{\bm u}_2 = 1} \bm u' \xmat_{\Mset \cup \Sset}'\bm{\Sigma}(\xmat \bvec^*)\xmat_{\Mset \cup \Sset} \bm u\\
			& = \frac{(s+m-r)\phi^*}{n^2} \lambda_{\max} \left\{ \xmat_{\Mset \cup \Sset}'\bm{\Sigma}(\xmat \bvec^*)\xmat_{\Mset \cup \Sset} \right\}\\
			& = O\left(\frac{s+m-r}{n}\right)
		\end{align*}
		where the final equality follows from \ref{assumption:glm:max hessian}.
		By the same argument, we find that
		\begin{equation}
			I_2 \leq \frac{(s+m-r)\phi^*}{n^2} \max_{\norm{\bm u}_2 = 1} \bm u'\xmat_{\Mset \cup \Sset}'\left[ \bm{\Sigma}(\xmat \tilde{\bvec}^*) - \bm{\Sigma}(\xmat \bvec^*) \right]\xmat_{\Mset \cup \Sset} \bm u. \label{eqn:glm:I2 bound}
		\end{equation}
		
		Let $\norm{\bm u}_2 = 1$. By the mean value theorem, we know that there exists $\bar{\bvec}$ on the line segment between $\bvec^*$ and $\tilde{\bvec}^*$ such that
		\begin{align*}
			\bigg|\bm u'\xmat_{\Mset \cup \Sset}'\bigg[ \bm{\Sigma}&(\xmat \tilde{\bvec}^*)  - \bm{\Sigma}(\xmat \bvec^*) \bigg]\xmat_{\Mset \cup \Sset} \bm u \bigg|\\
			& \leq \sum_{j \in \Mset \cup \Sset} (\bm u'\xmat_{\Mset \cup \Sset}'\diag\left\{ |\xmat_j| \circ |\bm b'''(\xmat \bar \bvec)| \right\}\xmat_{\Mset \cup \Sset} \bm u) |\tilde{\beta}_j^* - {\beta}_j^*|\\
			& \leq \max_{j \in \Mset \cup \Sset}(\bm u'\xmat_{\Mset \cup \Sset}'\diag\left\{ |\xmat_j| \circ |\bm b'''(\xmat \bar \bvec)| \right\}\xmat_{\Mset \cup \Sset} \bm u) \norm{\tilde{\bvec}^*_{\Mset \cup \Sset} -  {\bvec}^*_{\Mset \cup \Sset}}_1\\
			& \leq  \max_{j \in \Mset \cup \Sset} \lambda_{\max} \left\{\xmat_{\Mset \cup \Sset}'\diag\left\{ |\xmat_j| \circ |\bm b'''(\xmat \bar \bvec)| \right\}\xmat_{\Mset \cup \Sset} \right\} \norm{\tilde{\bvec}^*_{\Mset \cup \Sset} -  {\bvec}^*_{\Mset \cup \Sset}}_1.
		\end{align*}
		We have already shown that $\norm{\tilde{\bvec}^*_{\Mset \cup \Sset} -  {\bvec}^*_{\Mset \cup \Sset}}_2 = O\left( \sqrt{\frac{s+m-r}{n}} \right)$. Since $\bar \bvec$ is on the line segment between $\bvec^*$ and $\tilde{\bvec}^*$, clearly $\norm{\bar{\bvec}_{\Mset \cup \Sset} -  {\bvec}^*_{\Mset \cup \Sset}}_2 = O\left( \sqrt{\frac{s+m-r}{n}} \right) \leq \sqrt{\frac{(s+m)\log n}{n}}$---that is, $\bar \bvec \in \mathcal{N}_0$--- if $n$ is sufficiently large.
		As such,
		\begin{align*}
			\max_{j \in \Mset \cup \Sset} \lambda_{\max} \bigg\{\xmat_{\Mset \cup \Sset}' & \diag\left\{ |\xmat_j| \circ |\bm b'''(\xmat \bar \bvec)| \right\}\xmat_{\Mset \cup \Sset} \bigg\} \\
			&\leq \sup_{\bvec \in \mathcal{N}_0} \max_{j \in \Mset \cup \Sset} \lambda_{\max} \left\{\xmat_{\Mset \cup \Sset}'\diag\left\{ |\xmat_j| \circ |\bm b'''(\xmat \bvec)| \right\}\xmat_{\Mset \cup \Sset} \right\}\\
			&= O(n) 
		\end{align*}
		by \ref{assumption:glm:third derivative bound}.
		Therefore we see that for any $\bm u$ satisfying $\norm{\bm u}_2 = 1$,
		\begin{align*}
			\bigg|\bm u'\xmat_{\Mset \cup \Sset}'\bigg[ \bm{\Sigma}(\xmat \tilde{\bvec}^*)  - \bm{\Sigma}(\xmat \bvec^*) \bigg]\xmat_{\Mset \cup \Sset} \bm u \bigg|
			& \leq O(n) \norm{\tilde{\bvec}^*_{\Mset \cup \Sset} -  {\bvec}^*_{\Mset \cup \Sset}}_1 \\
			& \leq O(n \sqrt{s+m}) \norm{\tilde{\bvec}^*_{\Mset \cup \Sset} -  {\bvec}^*_{\Mset \cup \Sset}}_2\\
			& = O (\sqrt{n} (s+m))\\
			& = O(n)
		\end{align*}
		where the last equality holds since $s+m = o(\sqrt{n})$. Returning to \eqref{eqn:glm:I2 bound}, we conclude
		that $I_2 = \frac{(s+m-r)\phi^*}{n^2} O(n) = O\left(\frac{s+m-r}{n}\right)$.
		Applying this and our finding that $I_1 = O(\frac{s+m-r}{n})$ to \eqref{eqn:glm:I1 and I2}, we see that \eqref{eqn:glm:expectation of gradient of ell bar bound} holds.\\
		
		\noindent \textbf{Part 2.}
		We now shift to proving that $\delta_{01}^{\text{GLM}} \to 0$.
		Let $j \in (\Mset \cup \Sset)^c$. Applying the mean value theorem and using the fact that $\hat{\bvec}_{0,(\Mset \cup \Sset)^c}^{\oracle} = \bvec_{(\Mset \cup \Sset)^c}^* = \bm 0$, we find
		\begin{align*}
			\nabla_j \ell_n(\hat{\bvec}^{\oracle}_0) 
			& = \nabla_j \ell_n(\bvec^*) + \left[\nabla^2 \ell_n(\bar \bvec^{(j)})(\hat{\bvec}^{\oracle}_0 - \bvec^*)\right]_{j} \\
			& = \nabla_j \ell_n(\bvec^*) + \left[\nabla^2 \ell_n(\bar \bvec^{(j)})\right]_{j, \Mset \cup \Sset}(\hat{\bvec}^{\oracle}_{0,\Mset \cup \Sset} - \bvec^*_{\Mset \cup \Sset})  \\
			& = \nabla_j \ell_n(\bvec^*) + \left[\nabla^2 \ell_n(\bvec^*)\right]_{j, \Mset \cup \Sset}(\hat{\bvec}^{\oracle}_{0,\Mset \cup \Sset} - \bvec^*_{\Mset \cup \Sset}) +R_j
		\end{align*}
		where $\bar{\bvec}^{(j)}$ lies on the line segment between $\bvec^*$ and $\hat{\bvec}_0^{\oracle}$
		and we define $R_j = [\nabla^2 \ell_n(\bar \bvec^{(j)}) - \nabla^2 \ell_n(\bvec^*) ]_{j, \Mset \cup \Sset}(\hat{\bvec}^{\oracle}_{0,\Mset \cup \Sset} - \bvec^*_{\Mset \cup \Sset})$. We define $\bm R = (R_{m+s+1}, \ldots, R_p)' $ and write
		\begin{equation}
			\nabla_{(\Mset \cup \Sset)^c} \ell_n(\hat{\bvec}^{\oracle}_0) 
			= \nabla_{(\Mset \cup \Sset)^c} \ell_n(\bvec^*) + \left[\nabla^2 \ell_n(\bvec^*)\right]_{(\Mset \cup \Sset)^c, \Mset \cup \Sset}(\hat{\bvec}^{\oracle}_{0,\Mset \cup \Sset} - \bvec^*_{\Mset \cup \Sset}) +\bm R. \label{eqn:glm:mvt for gradient at oracle}
		\end{equation}
		In order to show that $\norm{ \nabla_{(\Mset \cup \Sset)^c} \ell_n( \hat{\bvec}_0^{\oracle} )  }_{\max} < a_1 \lambda$ with probability converging to $1$, we will bound the terms on the right hand side of \eqref{eqn:glm:mvt for gradient at oracle} under high probability events.
		
		We begin by bounding $\norm{\bm R}_{\max}$.
		In part 1, we showed that the event 
		$$\mathcal{F}_n = \left\{ \norm{ \hat{\bvec}_{0,\Mset \cup \Sset}^{\oracle} - \tilde{\bvec}^*_{\Mset \cup \Sset} }_2 \leq \tau \sqrt{\frac{s+m-r}{n}} \right\}$$ 
		occurs with probability at least $1 - O(\tau^{-2})$.
		We see that $\mathcal{F}_n$ implies
		$\norm{ \hat{\bvec}_{0,\Mset \cup \Sset}^{\oracle} - \bvec^*_{\Mset \cup \Sset} }_2 
		\leq \norm{ \hat{\bvec}_{0,\Mset \cup \Sset}^{\oracle} - \tilde{\bvec}^*_{\Mset \cup \Sset} }_2 + \norm{\tilde{\bvec}^*_{\Mset \cup \Sset} - \bvec^*_{\Mset \cup \Sset} }_2 
		\leq \tau \sqrt{\frac{s+m-r}{n}}  + O\left(\sqrt{\frac{s+m-r}{n}}\right)$.
		Let $\tau \leq \sqrt{\log n}/2$.
		Suppose that $\mathcal{F}_n$ holds and that $n$ is large enough that
		$
		\norm{ \hat{\bvec}_{0,\Mset \cup \Sset}^{\oracle} - \bvec^*_{\Mset \cup \Sset} }_2 
		\leq \sqrt{\frac{(s+m)\log n}{n}} 
		$, 
		so that $\hat{\bvec}_{0}^{\oracle} \in \mathcal{N}_0$.
		Applying the mean value theorem again, we see that for all $j \in (\Mset \cup \Sset)^c$,
		\begin{align}
			|R_j|
			& \leq \frac{1}{n} (\hat{\bvec}^{\oracle}_{0, \Mset \cup \Sset} - \bvec^*_{\Mset \cup \Sset})' \xmat_{\Mset \cup \Sset}' \diag\{ |\xmat_j| \circ |\bm b'''(\xmat \bar{\bar{\bvec}}^{(j)} ) | \}\xmat_{\Mset \cup \Sset}(\hat{\bvec}^{\oracle}_{0, \Mset \cup \Sset} - \bvec^*_{\Mset \cup \Sset}) \nonumber \\
			& \leq \max_{j \in (\Mset \cup \Sset)^c} \lambda_{\max}\left\{ \frac{1}{n} \xmat_{\Mset \cup \Sset}' \diag\{ |\xmat_j| \circ |\bm b'''(\xmat \bar{\bar{\bvec}}^{(j)} ) | \}\xmat_{\Mset \cup \Sset} \right\} \norm{\hat{\bvec}^{\oracle}_{0, \Mset \cup \Sset} - \bvec^*_{\Mset \cup \Sset}}_2^2 \label{eqn:glm:Rj bound} 
		\end{align}
		where $\bar{\bar{\bvec}}^{(j)}$ lies on the line segment between $\bvec^*$ and $\bar{\bvec}^{(j)}$.
		Since $\bar{\bar{\bvec}}^{(j)}$ is between $\bvec^*$ and $\hat{\bvec}_0^{\oracle}$, clearly  $\bar{\bar{\bvec}}^{(j)} \in \mathcal{N}_0$ for all $j \in (\Mset \cup \Sset)^c$. By extension, \ref{assumption:glm:third derivative bound} implies that
		$
		|R_j| = O\left(\norm{\hat{\bvec}^{\oracle}_{0, \Mset \cup \Sset} - \bvec^*_{\Mset \cup \Sset}}_2^2\right)
		$ for all $j \in (\Mset \cup \Sset)^c$ and therefore
		$$
		\norm{\bm R}_{\max} 
		= O\left(\norm{\hat{\bvec}^{\oracle}_{0, \Mset \cup \Sset} - \bvec^*_{\Mset \cup \Sset}}_2^2\right) = O\left( \tau^2\frac{(s+m)}{n} \right) + O\left( \tau \frac{(s+m)}{n} \right) + O\left( \frac{s+m}{n} \right).
		$$
		
		Turning our focus to the second term in \eqref{eqn:glm:mvt for gradient at oracle}, we see that under  $\mathcal{F}_n$ and \ref{assumption:glm:max hessian}
		\begin{align*}
			\bigg\lVert\left[\nabla^2 \ell_n(\bvec^*)\right]_{(\Mset \cup \Sset)^c, \Mset \cup \Sset}&(\hat{\bvec}^{\oracle}_{0,\Mset \cup \Sset} - \bvec^*_{\Mset \cup \Sset})\bigg\rVert_{\max} \\
			&\leq \norm{[\nabla^2 \ell_n(\bvec^*)]_{(\Mset \cup \Sset)^c, \Mset \cup \Sset}}_{\infty}\norm{\hat{\bvec}^{\oracle}_{0,\Mset \cup \Sset} - \bvec^*_{\Mset \cup \Sset}}_{\max} \\
			&\leq \frac{1}{n} \norm{\xmat_{(\Mset \cup \Sset)^c}' \bm \Sigma(\xmat \bvec^*) \xmat_{\Mset \cup \Sset}}_{\infty} \norm{ \hat{\bvec}^{\oracle}_{0,\Mset \cup \Sset} - \bvec^*_{\Mset \cup \Sset} }_2\\
			&= O\left(  \norm{ \hat{\bvec}^{\oracle}_{0,\Mset \cup \Sset} - \bvec^*_{\Mset \cup \Sset} }_2 \right)\\
			&= O\left( \tau \sqrt{ \frac{s+m}{n} } \right) + O\left( \sqrt{ \frac{s+m}{n} } \right).
		\end{align*}
		
		Lastly, we will bound $\norm{\nabla_{(\Mset \cup \Sset)^c} \ell_n(\bvec^*)}_{\max}$. We know
		$\nabla_{(\Mset \cup \Sset)^c} \ell_n(\bvec^*) = -\frac{1}{n}\xmat_{(\Mset \cup \Sset)^c}'(\yvec - \bm \mu(\xmat \bvec^*))$.
		Applying Proposition 4 of \cite{Fan2011}, we see that under \ref{assumption:glm:design column rates} and \ref{assumption:glm:expectation bound for chernoff}, for any $\gamma > 0$ and any $j \in (\Mset \cup \Sset)^c$
		\begin{align*}
			\Pr\left( |\xmat_j'(\yvec - \bm \mu(\xmat \bvec^*))| > \gamma\sqrt{n \log p} \right)
			& = 2 \exp\left\{-\frac{1}{2} \frac{\gamma^2 n \log p}{\norm{\xmat_j}_2^2 v_0 + \norm{\xmat_j}_{\infty} M \gamma\sqrt{n \log p} } \right\}\\
			& \leq 2 \exp\left\{ - \frac{1}{2} \frac{\gamma^2 \log p}{ B_1 v_0 + \gamma B_2 M }\right\}.
		\end{align*}
		Applying the union bound, we find
		\begin{align*}
			\Pr\left( \norm{\nabla_{(\Mset \cup \Sset)^c} \ell_n(\bvec^*)}_{\max} > \gamma\sqrt{\frac{\log p}{n}} \right)
			& = \Pr\left( \norm{\xmat_{(\Mset \cup \Sset)^c}'(\yvec - \bm \mu(\xmat \bvec^*))}_{\max} > \gamma\sqrt{n \log p} \right)\\
			& \leq 2(p-m-s) \exp\left\{ - \frac{1}{2} \frac{\gamma^2 \log p}{ B_1 v_0 + \gamma B_2 M }\right\}\\
			& =  2 \exp\left\{ - \frac{1}{2} \frac{\gamma^2 \log p}{ B_1 v_0 + \gamma B_2 M } + \log(p-m-s)\right\}.
		\end{align*}
		Suppose that $\gamma$ is large enough that $\gamma^2 > 2(B_1 v_0 + \gamma B_2 M)$.
		Then the right hand side of the previous expression rapidly converges to $0$ as $n, p\to \infty$. Therefore $\norm{\nabla_{(\Mset \cup \Sset)^c} \ell_n(\bvec^*)}_{\max} = O_p\left( \sqrt{\frac{\log p}{n}} \right)$.
		
		Let $\tau_n$ be a diverging sequence such that $\tau \leq \sqrt{\log n}/2$ and $\tau_n \cdot \max \left\{\frac{\sqrt{s+m}}{\sqrt{n}}, \frac{\sqrt{\log p}}{\sqrt n} \right\} = o(\lambda)$. 
		Define 
		$\mathcal{G}_n = \left\{\norm{\nabla_{(\Mset \cup \Sset)^c} \ell_n(\bvec^*)}_{\max} \leq \gamma\sqrt{\frac{\log p}{n}}\right\}$ with $\gamma$ large enough that $\Pr(\mathcal{G}_n^c) = o(1)$.
		Pulling everything together, we see that if $\mathcal{F}_n$ and $\mathcal{G}_n$ hold, then under \ref{assumption:rate of lambda} and our assumption that $s+m = o(\sqrt{n})$,
		\begin{align*}
			\norm{\nabla_{(\Mset \cup \Sset)^c} \ell_n(\hat{\bvec}^{\oracle}_0)}_{\max}
			& \leq \norm{ \nabla_{(\Mset \cup \Sset)^c} \ell_n(\bvec^*) }_{\max}\\
			& \hspace*{0.5cm} + \norm{ \left[\nabla^2 \ell_n(\bvec^*)\right]_{(\Mset \cup \Sset)^c,\Mset \cup \Sset}(\hat{\bvec}^{\oracle}_{0,\Mset \cup \Sset} - \bvec^*_{\Mset \cup \Sset})}_{\max} + \norm{\bm R}_{\max}\\
			& = O\left( \sqrt{\frac{\log p}{n}} \right) + O\left( \tau_n \sqrt{ \frac{s+m}{n} } \right) + O\left( \sqrt{ \frac{s+m}{n} } \right)\\
			& \hspace*{0.5cm} +  O\left( \tau_n^2\frac{(s+m)}{n} \right) + O\left( \tau_n \frac{(s+m)}{n} \right) + O\left( \frac{s+m}{n} \right)\\
			& \leq o(\lambda) +  o(\lambda) +  o(\lambda) +  O\left( \tau_n \sqrt{\frac{s+m}{n}} \sqrt{\frac{(s+m) \log n }{n}}\right) \\
			& \hspace*{0.5cm}  + O\left( \tau_n \sqrt{\frac{s+m}{n}} \sqrt{\frac{s+m}{n}}\right) + O\left( \sqrt{\frac{s+m}{n}} \sqrt{\frac{s+m}{n}} \right)\\
			& = o(\lambda) +  o(\lambda)O(1) + o(\lambda)O(1) + o(\lambda)O(1)\\
			& = o(\lambda) ,
		\end{align*}
		and, consequently, $\norm{\nabla_{(\Mset \cup \Sset)^c} \ell_n(\hat{\bvec}^{\oracle}_0)}_{\max} < a_1 \lambda$ for large $n$.
		As such, we have
		\begin{align*}
			\Pr \left(\norm{\nabla_{(\Mset \cup \Sset)^c} \ell_n(\hat{\bvec}^{\oracle}_0)}_{\max} \geq a_1 \lambda \right) \leq \Pr(\mathcal{F}_n^c) + \Pr(\mathcal{G}_n^c) = O(\tau_n^{-2}) + o(1) \to 0
		\end{align*}
		as $n \to \infty$, completing the proof.
	\end{proof}
	
	\begin{proof}[Proof of Theorem \ref{thm:glm initial estimator}]
		Define $F(\evec) = \ell_n(\bvec^* + \evec) - \ell_n(\bvec^*) + \lambda_{\lasso}\left(\norm{\bvec^* + \evec}_1 - \norm{\bvec^*}_1\right)$ and $\hat{\evec} = \argmin_{\evec} F(\evec)$. By \eqref{eqn:glm:lasso initial estimator}, we know $\hat{\evec} = \hat{\bvec}^{\lasso} - \bvec^*$. In addition, we see that $F(\hat \evec) \leq F(\bm 0) = 0$. Because $\ell_n(\bvec)$ is convex, clearly $F(\evec)$ is convex as well.
		Define the set $\mathcal{K}(\epsilon) = \mathcal{C} \cap \{\norm{\evec}_2  = \epsilon\}$ where $\mathcal{C} = \left\{\bm u \neq 0 : \norm{\bm u_{\mathcal{A}^c}}_1 \leq 3 \norm{\bm u_{\mathcal{A}}}_1 \right\}$ as in \ref{assumption:glm:RE condition}.
		Let $\epsilon = 5 \tilde{s}^{1/2}\lambda_{\lasso}/\kappa$.
		Lemma 4 of \cite{Negahban2012} provides that if $\hat \evec \in \mathcal{C}$ and $F(\evec) > \bm 0$ for all $\evec \in \kappa(\epsilon)$, then $\norm{\hat \evec}_2 \leq \epsilon$.
		As such, we aim to prove that $\hat \evec \in \mathcal{C}$ and $F(\evec) > \bm 0$ for all $\evec \in \kappa(\epsilon)$ under the event $\mathcal{F}_n = \left\{\norm{\nabla \ell_n(\bvec^*)}_{\max} \leq \frac{1}{2}\lambda_{\lasso}  \right\}$, and show that $\mathcal{F}_n$ holds with high probability.
		
		We start by proving that $\hat \evec \in \mathcal{C}$ under $\mathcal{F}_n$.
		By \eqref{eqn:glm:lasso initial estimator}, we know that 
		$$
		\ell_n(\hat \bvec^{\lasso}) + \lambda_{\lasso} \shortnorm{\hat \bvec^{\lasso}}_1 \leq \ell_n(\bvec^*) + \lambda_{\lasso} \shortnorm{\bvec^*}_1.
		$$
		By the convexity of $\ell_n(\cdot)$, this implies
		$$
		(\nabla \ell_n(\bvec^*))'(\hat \bvec^{\lasso} - \bvec^*) + \lambda_{\lasso} \shortnorm{\hat \bvec^{\lasso}}_1 
		\leq \ell_n(\hat \bvec^{\lasso}) -  \ell_n(\bvec^*) + \lambda_{\lasso} \shortnorm{\hat \bvec^{\lasso}}_1 
		\leq \lambda_{\lasso}\shortnorm{\bvec^*}_1.
		$$
		Suppose that $\mathcal{F}_n$ holds. Then clearly
		$$
		|(\nabla \ell_n(\bvec^*))'(\hat \bvec^{\lasso} - \bvec^*)| 
		\leq \shortnorm{\nabla \ell_n(\bvec^*)}_{\max} \shortnorm{ \hat \bvec^{\lasso} - \bvec^* }_1
		\leq \frac{1}{2}\lambda_{\lasso}\shortnorm{ \hat \bvec^{\lasso} - \bvec^* }_1.
		$$
		Combining the previous two expressions, we have 
		$$
		-\frac{1}{2}\lambda_{\lasso}\shortnorm{ \hat \bvec^{\lasso} - \bvec^* }_1 + \lambda_{\lasso} \shortnorm{\hat \bvec^{\lasso}}_1
		\leq \lambda_{\lasso}\shortnorm{\bvec^*}_1,
		$$
		which we can rewrite as 
		$$
		\frac{1}{2}\shortnorm{ \hat \bvec^{\lasso} - \bvec^* }_1
		\leq \shortnorm{\bvec^*}_1 - \shortnorm{\hat \bvec^{\lasso}}_1 + \shortnorm{ \hat \bvec^{\lasso} - \bvec^* }_1.
		$$
		
		We know that for all $j \in \mathcal{A}^c$, $\beta_j^* = 0$ and, consequently, 
		$
		|\beta_j^*| - | \hat{\beta}_j^{\lasso} | + | \hat {\beta}_j^{\lasso} - \beta_j^* | = 0
		$.
		As such, the previous inequality implies
		$$
		\frac{1}{2}\shortnorm{ \hat \bvec^{\lasso} - \bvec^* }_1
		\leq \shortnorm{\bvec^*_{\mathcal{A}}}_1 - \shortnorm{\hat \bvec^{\lasso}_{\mathcal{A}}}_1 + \shortnorm{ \hat \bvec^{\lasso}_{\mathcal{A}} - \bvec^*_{\mathcal{A}} }_1
		\leq 2 \shortnorm{ \hat \bvec^{\lasso}_{\mathcal{A}} - \bvec^*_{\mathcal{A}} }_1.
		$$
		Therefore we see that
		$$
		\frac{1}{2}\shortnorm{ \hat \bvec^{\lasso}_{\mathcal{A}^c} - \bvec^*_{\mathcal{A}^c} }_1 + \frac{1}{2}\shortnorm{ \hat \bvec^{\lasso}_{\mathcal{A}} - \bvec^*_{\mathcal{A}} }_1
		\leq 2 \shortnorm{ \hat \bvec^{\lasso}_{\mathcal{A}} - \bvec^*_{\mathcal{A}} }_1
		$$
		or, equivalently,
		$$
		\shortnorm{ \hat \bvec^{\lasso}_{\mathcal{A}^c} - \bvec^*_{\mathcal{A}^c} }_1
		\leq 3 \shortnorm{ \hat \bvec^{\lasso}_{\mathcal{A}} - \bvec^*_{\mathcal{A}} }_1.
		$$
		As such, $\mathcal{F}_n$ implies $\hat \evec \in \mathcal{C}$.
		
		We now shift to proving that $F(\evec) > \bm 0$ for all $\evec \in \kappa(\epsilon)$ under $\mathcal{F}_n$. 
		We start by deriving a lower bound for $\norm{\bvec^* + \evec}_1 - \norm{\bvec^*}_1$, finding that for all $\evec$
		\begin{equation}
			\norm{\bvec^* + \evec}_1 - \norm{\bvec^*}_1 
			= 	\norm{\bvec^*_{\mathcal{A}} + \evec_{\mathcal{A}}}_1 + \norm{\evec_{\mathcal{A}^c}}_1 - \norm{\bvec^*_{\mathcal{A}}}_1 
			\geq \norm{\evec_{\mathcal{A}^c}}_1 - \norm{\evec_{\mathcal{A}}}_1.\label{eqn:glm:difference of l1 norms bound}
		\end{equation}
		Next we will derive a lower bound for $\ell_n(\bvec^* + \evec) - \ell_n(\bvec^*)$. Let $\evec \in \kappa(\epsilon)$.
		Define $g:\reals \to \reals$ by $g(t) = \ell_n(\bvec^* + t \evec)$. One can easily show that
		\begin{eqnarray*}
			g''(t) & = & \frac{1}{n} \sum_{i=1}^n b''(\xvec_i'(\bvec^* + t \evec))(\xvec_i'\evec)^2\\
			g'''(t) & = & \frac{1}{n} \sum_{i=1}^n b'''(\xvec_i'(\bvec^* + t \evec))(\xvec_i'\evec)^3.
		\end{eqnarray*}
		By \ref{assumption:glm:self concordant}, we know that $\exists_{K > 0}$ such that $0 \leq |b'''(t)| \leq K b''(t)$ for all $t$. Therefore we find
		\begin{align}
			|g'''(t)|
			& \leq  \frac{1}{n} \sum_{i=1}^n | b'''(\xvec_i'(\bvec^* + t \evec))|\cdot|(\xvec_i'\evec)^3|\nonumber\\
			& \leq  \frac{1}{n} \sum_{i=1}^n K b''(\xvec_i'(\bvec^* + t \evec))(\xvec_i'\evec)^2\cdot|\xvec_i'\evec| \nonumber\\
			& \leq K \cdot \max_i |\xvec_i'\evec| \cdot g''(t) \nonumber \\
			& \leq K \cdot c \norm{\evec}_1 \cdot g''(t) \label{eqn:glm:g(3) bound}.
		\end{align}
		Because $\evec \in \mathcal{C}$, we have
		$ \norm{\evec}_1 \leq 4 \norm{\evec_{\mathcal{A}}}_1 \leq 4 \tilde{s}^{1/2} \norm{\evec_{\mathcal{A}}}_2 \leq 4 \tilde{s}^{1/2} \norm{\evec}_2$.
		Combining this with \eqref{eqn:glm:g(3) bound}, we find
		$ |g'''(t)| \leq 4 K c \tilde{s}^{1/2}\norm{\evec}_2 \cdot g''(t)$.
		Let $R = 4 K c \tilde{s}^{1/2}$ for ease of notation. By Proposition 1 of \cite{Bach2010}, since $|g'''(t)| \leq R\norm{\evec}_2 \cdot g''(t)$, we have
		\begin{equation*}
			\ell_n(\bvec^* + \evec) - \ell_n(\bvec^*)
			\geq \evec'\nabla \ell_n(\bvec^*) + \frac{\evec'\nabla^2 \ell_n(\bvec^*)\evec}{ R^2 \norm{\evec}_2^2}\left( e^{-R\norm{\evec}_2} + R\norm{\evec}_2 -1\right).
		\end{equation*}
		Define $h(z) = z^{-2} \left(e^{-z} + z - 1\right)$. Using elementary calculus, one can show that $h(z)$ is decreasing for $z > 0$.
		Clearly $R \norm{\evec}_2 > 0$. Under our assumption that $\lambda_{\lasso} \leq \frac{\kappa}{20K c \tilde s}$, we see that $R \norm{\evec}_2 = 20 K c \tilde{s}\lambda_{\lasso}/\kappa \leq 1$.
		As such, $ h(R\norm{\evec}_2) \geq h(1) > \frac{1}{3} $.
		Therefore we have
		\begin{equation}
			\ell_n(\bvec^* + \evec) - \ell_n(\bvec^*)
			\geq \evec'\nabla \ell_n(\bvec^*) + \frac{1}{3}\evec'\nabla^2 \ell_n(\bvec^*)\evec . \label{eqn:glm:lower bound for elln evec diff}
		\end{equation}
		
		We see that if $\mathcal{F}_n$ holds, then
		\begin{equation}
			\evec'\nabla \ell_n(\bvec^*) 
			\geq \norm{\evec}_1 \norm{\nabla \ell_n(\bvec^*)}_{\max} 
			\geq - \frac{1}{2}\lambda_{\lasso} \norm{\evec}_1. \label{eqn:glm:evec prod with gradient bound}
		\end{equation}
		Combining \eqref{eqn:glm:difference of l1 norms bound}, \eqref{eqn:glm:lower bound for elln evec diff}, and \eqref{eqn:glm:evec prod with gradient bound}, we see that under $\mathcal{F}_n$ and \ref{assumption:glm:RE condition},
		\begin{align*}
			F(\evec) 
			& = \ell_n(\bvec^* + \evec) - \ell_n(\bvec^*) + \lambda_{\lasso}\left(\norm{\bvec^* + \evec}_1 - \norm{\bvec^*}_1\right) \\
			& \geq \evec'\nabla \ell_n(\bvec^*) + \frac{1}{3}\evec'\nabla^2 \ell_n(\bvec^*)\evec + \lambda_{\lasso} \left(\norm{\evec_{\mathcal{A}^c}}_1 - \norm{\evec_{\mathcal{A}}}_1 \right)\\
			& \geq -\frac{1}{2}\lambda_{\lasso} \norm{\evec}_1 + \frac{1}{3} \kappa \norm{\evec}_2^2 + \lambda_{\lasso} \left(\norm{\evec_{\mathcal{A}^c}}_1 - \norm{\evec_{\mathcal{A}}}_1 \right)\\
			& \geq \frac{1}{3} \kappa \norm{\evec}_2^2 - \frac{3}{2} \lambda_{\lasso} \norm{\evec_{\mathcal{A}}}_1\\
			& \geq \frac{1}{3} \kappa \norm{\evec}_2^2 - \frac{3}{2} \lambda_{\lasso} \tilde{s}^{1/2} \norm{\evec_{\mathcal{A}}}_2\\
			& \geq \frac{1}{3} \kappa \norm{\evec}_2^2 - \frac{3}{2} \lambda_{\lasso} \tilde{s}^{1/2} \norm{\evec}_2\\
			& = \frac{1}{3} \kappa \epsilon^2 - \frac{3}{2} \lambda_{\lasso} \tilde{s}^{1/2} \epsilon \\
			& = \frac{5}{6} \frac{\tilde s \lambda_{\lasso}^2}{\kappa}\\
			& > 0.
		\end{align*}
		
		We have shown that if $\mathcal{F}_n$ holds, then $\hat \evec \in \mathcal{C}$ and $F(\evec) > \bm 0$ for all $\evec \in \kappa(\epsilon)$ and, consequently, $\shortnorm{\hat \bvec^{\lasso} - \bvec^* }_2  = \shortnorm{\hat \evec}_2 \leq \epsilon = 5 \tilde{s}^{1/2}\lambda_{\lasso}/\kappa$ by Lemma 4 of \cite{Negahban2012}.
		To finish the proof, we need only derive a lower bound for the probability that $\mathcal{F}_n$ occurs.
		Under \ref{assumption:glm:design column rates} and \ref{assumption:glm:expectation bound for chernoff}, we can apply Proposition 4 of \cite{Fan2011} and the union bound (as in the proof of Theorem \ref{thm:glm lla covergence prob}) to show
		\begin{align*}
			\Pr(\mathcal{F}_n^c)
			& = \Pr\left( \norm{\xmat'(\yvec - \bm\mu(\xmat'\bvec^*))  }_{\max} > \frac{n}{2}\lambda_{\lasso} \right)\\
			& \leq 2p\exp\left\{  -\frac{1}{8} \frac{n^2 \lambda_{\lasso}^2}{\max_j\norm{\xmat_j}_2^2v_0 + \max_j \norm{\xmat_j}_{\infty} \frac{nM}{2}\lambda_{\lasso} } \right\}\\
			& \leq 2 p \exp \left\{ -\frac{1}{8}\frac{n \lambda_{\lasso}^2}{B_1 v_0 + \frac{1}{2}B_2 M\sqrt{\frac{n}{\log p}} \lambda_{\lasso}   } \right\}.
		\end{align*}
		
	\end{proof}

	\subsection{Proofs of asymptotic results for the oracle estimators}
	\begin{proof}[Proof of Lemma \ref{lemma:oracle asymptotics}]
		We focus on proving the results for $\hat{\bvec}^{\oracle}_0$ as the proofs for $\hat{\bvec}^{\oracle}_a$ follow the same lines.\\
		
		\noindent \textbf{Part 1.} 
		Define $\bar{\ell}_n(\evec)$, $\mathcal{N}_{\tau}$, and $\mathcal{E}_n$ as in the proof of Theorem \ref{thm:glm lla covergence prob}.
		In the proof of Theorem \ref{thm:glm lla covergence prob}, we showed that if $\tau \leq \sqrt{\log n}/2$ and $n$ is sufficiently large, then $\mathcal{E}_n$ implies
		$ \norm{\hat{\bvec}^{\oracle}_{0, \Mset \cup \Sset} - \bvec^*_{\Mset \cup \Sset}}_2 \leq (\tau + \bar c)\sqrt{\frac{s+m-r}{n}}$ for some $\bar c > 0$
		and $\Pr(\mathcal{E}_n) > 1 - O(\tau^{-2})$.
		As such, for every $\epsilon > 0$, we can find $N \in \mathbb{N}$, $\tau \leq \sqrt{\log N}/2$ such that
		$$
		\Pr\left( \norm{\hat{\bvec}^{\oracle}_{0, \Mset \cup \Sset} - \bvec^*_{\Mset \cup \Sset}}_2 \leq (\tau + \bar c)\sqrt{\frac{s+m-r}{n}} \right) \geq \Pr(\mathcal{E}_n) > 1 - \epsilon 
		$$
		for all $n > N$.
		Therefore
		$
		\norm{\hat{\bvec}^{\oracle}_{0, \Mset \cup \Sset} - \bvec^*_{\Mset \cup \Sset}}_2 = O_p\left(  \sqrt{\frac{s+m-r}{n}} \right)  
		$ 
		by definition.\\
		
		\noindent \textbf{Part 2.} 
		We have shown that $\norm{\hat{\bvec}^{\oracle}_{0, \Mset \cup \Sset} - \bvec^*_{\Mset \cup \Sset}}_2 = O_p\left( \sqrt{\frac{s+m-r}{n}} \right)$. It is straightforward to show that this implies $\Pr(\hat{\bvec}^{\oracle}_0 \in \mathcal{N}_0) \to 1$ as $n \to \infty$.
		We will use these results to establish \eqref{eqn:reduced oracle limiting expression}.
		
		Applying the mean value theorem componentwise, as in \eqref{eqn:glm:mvt for gradient at oracle}, we find
		\begin{align}
			\nabla_{\Mset \cup \Sset} \ell_n(\hat{\bvec}^{\oracle}_0) 
			& = \nabla_{\Mset \cup \Sset} \ell_n(\bvec^*) +  \bm K_n(\hat{\bvec}^{\oracle}_{0, \Mset \cup \Sset} - \bvec^*_{\Mset \cup \Sset}) + \bm{R} \label{eqn:glm:mvt for MuS gradient}
		\end{align}
		where $R_j = [\nabla^2 \ell_n(\tilde \bvec^{(j)}) - \nabla^2 \ell_n(\bvec^*)]_{j, \Mset \cup \Sset} (\hat{\bvec}^{\oracle}_{0, \Mset \cup \Sset} - \bvec^*_{\Mset \cup \Sset})$ and $\tilde \bvec^{(j)}$ is on the line segment between $\bvec^*$ and $\hat{\bvec}^{\oracle}_0$ for $j \in \Mset \cup \Sset$.
		Applying the mean value theorem again, as in \eqref{eqn:glm:Rj bound}, and using the fact that $\Pr(\hat{\bvec}^{\oracle}_0 \in \mathcal{N}_0) \to 1$ as $n \to \infty$, one can show that
		$$
		\norm{\bm R}_{\max} = O_p\left(\norm{\hat{\bvec}^{\oracle}_{0, \Mset \cup \Sset} - \bvec^*_{\Mset \cup \Sset} }_2^2 \right) = O_p\left( \frac{s+m}{n} \right).
		$$
		Therefore we find
		$$
		\norm{\bm R}_{2} \leq (s+m)	\norm{\bm R}_{\max} = O_p\left(\frac{(s+m)^{3/2}}{n}\right) = o_p(n^{-1/2})
		$$
		since $s+m = o(n^{1/3})$.
		
		Combining \eqref{eqn:glm:mvt for MuS gradient} with \ref{assumption:reduced oracle unique}, we find
		$$
		\begin{bmatrix}
			\Cmat ' \nvec\\
			\bm 0
		\end{bmatrix} 
		= \nabla_{\Mset \cup \Sset} \ell_n(\bvec^*) +  \bm K_n(\hat{\bvec}^{\oracle}_{0, \Mset \cup \Sset} - \bvec^*_{\Mset \cup \Sset}) + \bm{R}
		$$
		for some $\nvec \in \reals^r$. Rearranging terms and multiplying by $\sqrt{n} \bm K_n^{-1}$ yields
		\begin{equation*}
			\sqrt{n} (\hat{\bvec}^{\oracle}_{0, \Mset \cup \Sset} - \bvec^*_{\Mset \cup \Sset}) 
			= -\sqrt{n} \bm K_n^{-1}  \nabla_{\Mset \cup \Sset} \ell_n(\bvec^*) +  
			\sqrt{n} \bm K_n^{-1}
			\begin{bmatrix}
				\Cmat ' \\
				\bm 0
			\end{bmatrix} \nvec 
			- \sqrt{n} \bm K_n^{-1} \bm{R}.
		\end{equation*}
		Under \ref{assumption:glm:min hessian}, we see that
		$
		\norm{\bm K_n^{-1}}_2 
		= \lambda_{\max} \{ \bm K_n^{-1} \}   
		= \lambda_{\min}^{-1} \{ \bm K_n \}  
		\leq \frac{1}{c}   
		$
		for all $n$ and, by extension,
		$
		\norm{\bm K_n^{-1} \bm R }_2 \leq \norm{ \bm K_n^{-1} }_2 \norm{\bm R}_2 = o_p(n^{-1/2})
		$.
		As such, the previous expression simplifies to
		\begin{equation}
			\sqrt{n} (\hat{\bvec}^{\oracle}_{0, \Mset \cup \Sset} - \bvec^*_{\Mset \cup \Sset}) 
			= -\sqrt{n} \bm K_n^{-1}  \nabla_{\Mset \cup \Sset} \ell_n(\bvec^*) +  
			\sqrt{n} \bm K_n^{-1}
			\begin{bmatrix}
				\Cmat ' \\
				\bm 0
			\end{bmatrix} \nvec 
			+ o_p(1). \label{eqn:glm:root n oracle expression with nu}
		\end{equation}
		
		To simplify further, we solve for $\nvec$. We multiply both sides of \eqref{eqn:glm:root n oracle expression with nu} by $\frac{1}{\sqrt n} \begin{bsmallmatrix} \Cmat & \bm 0 \end{bsmallmatrix}$ to find
		\begin{align*}
			\begin{bmatrix} \Cmat & \bm 0 \end{bmatrix}(\hat{\bvec}^{\oracle}_{0, \Mset \cup \Sset} - \bvec^*_{\Mset \cup \Sset}) 
			& = - \begin{bmatrix} \Cmat & \bm 0 \end{bmatrix} \bm K_n^{-1}  \nabla_{\Mset \cup \Sset} \ell_n(\bvec^*) +  
			\bm \Psi \nvec 
			+ \begin{bmatrix} \Cmat & \bm 0 \end{bmatrix} \widetilde{\bm R},
		\end{align*}
		where $\widetilde{\bm R} = o_p(n^{-1/2})$.
		Since $\Cmat \hat{\bvec}_{0, \Mset}^{\oracle} = \bm t$ and $\Cmat \bvec^*_{\Mset} = \bm t + \bm h_n$ by \ref{assumption:hn order}, we see that
		$ \begin{bsmallmatrix} \Cmat & \bm 0 \end{bsmallmatrix}(\hat{\bvec}^{\oracle}_{0, \Mset \cup \Sset} - \bvec^*_{\Mset \cup \Sset}) = \Cmat \hat{\bvec}_{0, \Mset}^{\oracle} - \Cmat\bvec^*_{\Mset} = -\bm h_n$.
		Combining this with the previous expression, we find
		$$
		- \bm h_n = - \begin{bmatrix} \Cmat & \bm 0 \end{bmatrix} \bm K_n^{-1}  \nabla_{\Mset \cup \Sset} \ell_n(\bvec^*) +  \bm \Psi \nvec + \begin{bmatrix} \Cmat & \bm 0 \end{bmatrix}\widetilde{\bm R}.
		$$
		Rearranging terms and multiplying by $\bm \Psi^{-1}$ yields
		$$
		\nvec = - \bm \Psi^{-1} \bm h_n + \bm \Psi^{-1}\begin{bmatrix} \Cmat & \bm 0 \end{bmatrix} \bm K_n^{-1}  \nabla_{\Mset \cup \Sset} \ell_n(\bvec^*) - \bm \Psi^{-1} \begin{bmatrix} \Cmat & \bm 0 \end{bmatrix}\widetilde{\bm R}.
		$$
		Plugging this expression for $\nvec$ into \eqref{eqn:glm:root n oracle expression with nu}, we find
		\begin{align}
			\sqrt{n} (\hat{\bvec}^{\oracle}_{0, \Mset \cup \Sset} - \bvec^*_{\Mset \cup \Sset}) 
			= & -\sqrt{n} \bm K_n^{-1}  \nabla_{\Mset \cup \Sset} \ell_n(\bvec^*) 
			- \sqrt{n} \bm K_n^{-1} \begin{bmatrix} \Cmat'\\ \bm 0 \end{bmatrix} \bm \Psi^{-1} \bm h_n \nonumber \\
			& + \sqrt{n} \bm K_n^{-1} \begin{bmatrix} \Cmat'\\ \bm 0 \end{bmatrix} \bm \Psi^{-1}\begin{bmatrix} \Cmat & \bm 0 \end{bmatrix} \bm K_n^{-1}  \nabla_{\Mset \cup \Sset} \ell_n(\bvec^*)  \nonumber \\
			& - \sqrt{n} \bm K_n^{-1} \begin{bmatrix} \Cmat'\\ \bm 0 \end{bmatrix} \bm \Psi^{-1} \begin{bmatrix} \Cmat & \bm 0 \end{bmatrix}\widetilde{\bm R} 
			+ o_p(1)  \nonumber \\
			= & \sqrt{n}\bm K_n^{-1/2} (\bm P_n - \bm I) \bm K_n^{-1/2}  \nabla_{\Mset \cup \Sset} \ell_n(\bvec^*) 
			- \sqrt{n} \bm K_n^{-1} \begin{bmatrix} \Cmat'\\ \bm 0 \end{bmatrix} \bm \Psi^{-1} \bm h_n  \nonumber \\
			& - \sqrt{n} \bm K_n^{-1} \begin{bmatrix} \Cmat'\\ \bm 0 \end{bmatrix} \bm \Psi^{-1} \begin{bmatrix} \Cmat & \bm 0 \end{bmatrix}\widetilde{\bm R} 
			+ o_p(1). \label{eqn:glm:root n oracle expression without nu}
		\end{align}
		
		Since $\bm h_n = \Cmat \Cmat'(\Cmat \Cmat')^{-1}\bm h_n $, we find that 
		\begin{align}
			\bm K_n^{-1} \begin{bmatrix} \Cmat'\\ \bm 0 \end{bmatrix} \bm \Psi^{-1} \bm h_n
			& = \bm K_n^{-1} \begin{bmatrix} \Cmat'\\ \bm 0 \end{bmatrix} \bm \Psi^{-1}  \begin{bmatrix} \Cmat & \bm 0 \end{bmatrix} \begin{bmatrix} \Cmat'(\Cmat \Cmat')^{-1}\bm h_n \\ \bm 0 \end{bmatrix} \nonumber \\
			& = \bm K_n^{-1/2} \bm P_n \bm K_n^{1/2} \begin{bmatrix} \Cmat'(\Cmat \Cmat')^{-1}\bm h_n \\ \bm 0 \end{bmatrix}. \label{eqn:Kn term expansion for reduced estimator}
		\end{align}
		In addition, we see that under \ref{assumption:glm:max hessian},
		\begin{align*}
			\norm{  \bm K_n^{-1} \begin{bsmallmatrix} \Cmat'\\ \bm 0 \end{bsmallmatrix} \bm \Psi^{-1} \begin{bsmallmatrix} \Cmat & \bm 0 \end{bsmallmatrix}\widetilde{\bm R} }_2
			&\leq \norm{\bm K_n^{-1/2} }_2 \norm{  \bm K_n^{-1/2} \begin{bsmallmatrix} \Cmat'\\ \bm 0 \end{bsmallmatrix} \bm \Psi^{-1} \begin{bsmallmatrix} \Cmat & \bm 0 \end{bsmallmatrix}\bm K_n^{-1/2}  }_2 \norm{\bm K_n^{1/2}}_2 \shortnorm{\widetilde{\bm R}}_2\\
			& = \lambda_{\max}^{1/2} \{\bm K_n^{-1} \} \lambda_{\max}\{ \bm K_n^{-1/2} \begin{bsmallmatrix} \Cmat'\\ \bm 0 \end{bsmallmatrix} \bm \Psi^{-1} \begin{bsmallmatrix} \Cmat & \bm 0 \end{bsmallmatrix}\bm K_n^{-1/2}  \} \lambda_{\max}^{1/2} \{\bm K_n \} \shortnorm{\widetilde{\bm R}}_2\\
			& = \lambda_{\max}^{1/2} \{\bm K_n^{-1} \} \lambda_{\max}\{ \begin{bsmallmatrix} \Cmat & \bm 0 \end{bsmallmatrix}\bm K_n^{-1}  \begin{bsmallmatrix} \Cmat'\\ \bm 0 \end{bsmallmatrix} \bm \Psi^{-1} \} \lambda_{\max}^{1/2} \{\bm K_n \} \shortnorm{\widetilde{\bm R}}_2\\
			& = \lambda_{\max}^{1/2} \{\bm K_n^{-1} \} \lambda_{\max}\{ \bm \Psi \bm \Psi^{-1} \} \lambda_{\max}^{1/2} \{\bm K_n \} \shortnorm{\widetilde{\bm R}}_2\\
			& = O_p(\shortnorm{\widetilde{\bm R}}_2)\\
			& = o_p(n^{-1/2}).
		\end{align*}
		Plugging these expressions into \eqref{eqn:glm:root n oracle expression without nu} yields \eqref{eqn:reduced oracle limiting expression}.
	\end{proof}
	
	We rely on the following lemma in our proof of Lemma \ref{lemma:oracle test statistic distribution}.
	\begin{lemma} \label{lemma:intermediate bounds for oracle}
		Under the conditions of Lemma \ref{lemma:oracle test statistic distribution}, the following bounds hold
		\begin{eqnarray}
			\lambda_{\max}\{ \bm{K}_n\} & = & O(1) \label{eqn:intermediate lemma 1}\\
			\lambda_{\max}\{ \bm{K}_n^{1/2}\} & = & O(1) \label{eqn:intermediate lemma 2}\\
			\lambda_{\max}\{ \bm{K}_n^{-1/2}\} & = & O(1) \label{eqn:intermediate lemma 3}\\
			\lambda_{\max}\{ \bm{\Psi}^{-1} \} & = & O(1) \label{eqn:intermediate lemma 4}\\
			\norm{ \bm \Psi^{-1/2} \begin{bsmallmatrix} \Cmat & \bm 0 \end{bsmallmatrix} }_2 & = & O(1) \label{eqn:intermediate lemma 5}\\
			\norm{ \bm \Psi^{1/2} \left( \begin{bsmallmatrix} \Cmat & \bm 0 \end{bsmallmatrix} \Kna^{-1} \begin{bsmallmatrix} \Cmat' \\ \bm 0 \end{bsmallmatrix} \right)^{-1} \bm \Psi^{1/2} - \mathbf{I}_r }_2 & = & O_p\left( \frac{s+m}{\sqrt{n}} \right) \label{eqn:intermediate lemma 6}\\
			\norm{ \bm K_n^{1/2} \Kno^{-1} \bm K_n^{1/2} - \mathbf{I}_{m+s} }_2 & = & O_p\left( \frac{s+m}{\sqrt{n}} \right), \label{eqn:intermediate lemma 7}
		\end{eqnarray}
		where $\Kna = \nabla_{\Mset \cup \Sset}^2 \ell_n(\hat{\bvec}_a^{\oracle} )$ and $\Kno = \nabla_{\Mset \cup \Sset}^2 \ell_n(\hat{\bvec}_0^{\oracle} )$.
	\end{lemma}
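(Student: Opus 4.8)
The plan is to handle the five deterministic $O(1)$ bounds \eqref{eqn:intermediate lemma 1}--\eqref{eqn:intermediate lemma 5} directly from the design assumptions and then reduce the two stochastic bounds \eqref{eqn:intermediate lemma 6}--\eqref{eqn:intermediate lemma 7} to a single Hessian-perturbation estimate. Since $\bm K_n = \frac{1}{n}\xmat_{\Mset \cup \Sset}'\bm\Sigma(\xmat\bvec^*)\xmat_{\Mset \cup \Sset}$, bound \eqref{eqn:intermediate lemma 1} is immediate from \ref{assumption:glm:max hessian}, and \eqref{eqn:intermediate lemma 2} follows because $\lambda_{\max}\{\bm K_n^{1/2}\} = \lambda_{\max}^{1/2}\{\bm K_n\}$. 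For \eqref{eqn:intermediate lemma 3} I would use $\lambda_{\max}\{\bm K_n^{-1/2}\} = \lambda_{\min}^{-1/2}\{\bm K_n\}$ together with the lower bound $\lambda_{\min}\{\bm K_n\} \geq c$ supplied by \ref{assumption:glm:min hessian} at $\bvec^* \in \mathcal{N}_0$. For \eqref{eqn:intermediate lemma 4}, writing $\bm u'\bm\Psi\bm u$ as $\bm w'\bm K_n^{-1}\bm w$ with $\bm w = \begin{bsmallmatrix}\Cmat' \\ \bm 0'\end{bsmallmatrix}\bm u$ and bounding below by $\lambda_{\max}^{-1}\{\bm K_n\}\,\lambda_{\min}\{\Cmat\Cmat'\}$ shows $\lambda_{\min}\{\bm\Psi\}$ is bounded away from $0$ via \eqref{eqn:intermediate lemma 1} and \ref{assumption:Cmat eigenvalue}, giving $\lambda_{\max}\{\bm\Psi^{-1}\} = O(1)$. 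The key trick for \eqref{eqn:intermediate lemma 5} is that $\mathbf G = \bm\Psi^{-1/2}\begin{bsmallmatrix}\Cmat & \bm 0\end{bsmallmatrix}\bm K_n^{-1/2}$ satisfies $\mathbf G\mathbf G' = \bm\Psi^{-1/2}\bm\Psi\bm\Psi^{-1/2} = \mathbf I_r$, so $\norm{\mathbf G}_2 = 1$ and hence $\norm{\bm\Psi^{-1/2}\begin{bsmallmatrix}\Cmat & \bm 0\end{bsmallmatrix}}_2 = \norm{\mathbf G\bm K_n^{1/2}}_2 \leq \norm{\bm K_n^{1/2}}_2 = O(1)$; crucially this route never needs an upper bound on $\lambda_{\max}\{\Cmat\Cmat'\}$, which the assumptions do not provide.

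The heart of the proof is the estimate $\norm{\Kna - \bm K_n}_2 = O_p((s+m)/\sqrt n)$ and its analogue for $\Kno$. I would obtain this by a mean value theorem argument identical in spirit to the bound on $I_2$ in the proof of Theorem \ref{thm:glm lla covergence prob}: for a unit vector $\bm u$, $\bm u'(\Kna - \bm K_n)\bm u = \frac{1}{n}[f(\hat{\bvec}_a^{\oracle}) - f(\bvec^*)]$ where $f(\bvec) = \bm u'\xmat_{\Mset \cup \Sset}'\bm\Sigma(\xmat\bvec)\xmat_{\Mset \cup \Sset}\bm u$, and expanding $f$ to first order along the segment produces the factor $\max_{j}\lambda_{\max}\{\xmat_{\Mset \cup \Sset}'\diag\{|\xmat_j|\circ|\bm b'''(\xmat\bar\bvec)|\}\xmat_{\Mset \cup \Sset}\} = O(n)$ from \ref{assumption:glm:third derivative bound} (valid since $\hat{\bvec}_a^{\oracle} \in \mathcal{N}_0$ with probability tending to one by Lemma \ref{lemma:oracle asymptotics}) multiplied by $\norm{\hat{\bvec}_{a,\Mset \cup \Sset}^{\oracle} - \bvec^*_{\Mset \cup \Sset}}_1 \leq \sqrt{s+m}\,\norm{\hat{\bvec}_{a,\Mset \cup \Sset}^{\oracle} - \bvec^*_{\Mset \cup \Sset}}_2 = O_p((s+m)/\sqrt n)$, again by Lemma \ref{lemma:oracle asymptotics}. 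Because this bound is uniform in $\bm u$, it controls the operator norm. Weyl's inequality then gives $\lambda_{\min}\{\Kna\} \geq c - o_p(1)$, so $\norm{\Kna^{-1}}_2 = O_p(1)$, and the identity $\Kna^{-1} - \bm K_n^{-1} = \bm K_n^{-1}(\bm K_n - \Kna)\Kna^{-1}$ together with \eqref{eqn:intermediate lemma 3} yields $\norm{\Kna^{-1} - \bm K_n^{-1}}_2 = O_p((s+m)/\sqrt n)$; the same argument applies verbatim to $\Kno$.

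With this estimate in hand, \eqref{eqn:intermediate lemma 7} follows by writing $\bm K_n^{1/2}\Kno^{-1}\bm K_n^{1/2} - \mathbf I_{m+s} = \bm K_n^{1/2}(\Kno^{-1} - \bm K_n^{-1})\bm K_n^{1/2}$ and applying submultiplicativity with \eqref{eqn:intermediate lemma 2} and \eqref{eqn:intermediate lemma 3}. For \eqref{eqn:intermediate lemma 6}, setting $\mathbf G_0 = \bm\Psi^{-1/2}\begin{bsmallmatrix}\Cmat & \bm 0\end{bsmallmatrix}$ (with $\norm{\mathbf G_0}_2 = O(1)$ from \eqref{eqn:intermediate lemma 5}) and $\bm\Psi_a = \begin{bsmallmatrix}\Cmat & \bm 0\end{bsmallmatrix}\Kna^{-1}\begin{bsmallmatrix}\Cmat' \\ \bm 0'\end{bsmallmatrix}$, I would use $\bm\Psi^{-1/2}\bm\Psi_a\bm\Psi^{-1/2} - \mathbf I_r = \mathbf G_0(\Kna^{-1} - \bm K_n^{-1})\mathbf G_0'$, whose operator norm, call it $\norm{\mathbf E}_2$, is $O_p((s+m)/\sqrt n) = o_p(1)$ since $s+m = o(n^{1/3})$. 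A Neumann-series perturbation argument then gives $\bm\Psi^{1/2}\bm\Psi_a^{-1}\bm\Psi^{1/2} - \mathbf I_r = (\mathbf I_r + \mathbf E)^{-1} - \mathbf I_r = -(\mathbf I_r + \mathbf E)^{-1}\mathbf E$, whose norm is $O_p((s+m)/\sqrt n)$.

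The main obstacle is the uniform Hessian-perturbation estimate $\norm{\Kna - \bm K_n}_2 = O_p((s+m)/\sqrt n)$: it must hold in operator norm uniformly over unit vectors $\bm u$ (forcing the $\max_j$ over columns inside the third-derivative control), and it must combine the extra $\sqrt{s+m}$ incurred by passing from the $\ell_2$ to the $\ell_1$ parameter norm with the oracle $\ell_2$ rate $O_p(\sqrt{(s+m)/n})$ so as to land exactly at the target rate. A secondary subtlety, already exploited above, is to route every $\Cmat$ factor through $\bm\Psi^{-1/2}\begin{bsmallmatrix}\Cmat & \bm 0\end{bsmallmatrix}$ rather than through $\Cmat\Cmat'$ directly, since only $\lambda_{\max}\{(\Cmat\Cmat')^{-1}\}$, and not $\lambda_{\max}\{\Cmat\Cmat'\}$, is assumed bounded in \ref{assumption:Cmat eigenvalue}.
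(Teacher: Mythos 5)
Your proof is correct, and its overall skeleton matches the paper's: both reduce \eqref{eqn:intermediate lemma 6} and \eqref{eqn:intermediate lemma 7} to the single estimate $\norm{\Kna - \bm K_n}_2 = O_p((s+m)/\sqrt n)$ (and its $\Kno$ analogue), then pass to $\norm{\Kna^{-1} - \bm K_n^{-1}}_2$ via the resolvent identity and a Weyl-type lower bound on $\lambda_{\min}\{\Kna\}$, and both handle \eqref{eqn:intermediate lemma 5} with the same observation that $\bm\Psi^{-1/2}\begin{bsmallmatrix}\Cmat & \bm 0\end{bsmallmatrix}\bm K_n^{-1/2}$ has operator norm exactly $1$. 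The differences are local but real. First, you prove \eqref{eqn:intermediate lemma 1}--\eqref{eqn:intermediate lemma 4} directly from \ref{assumption:sparsity} and \ref{assumption:glm:min hessian}, whereas the paper simply imports them from Lemma 5.1 of \cite{Shi2019}; your route makes the lemma self-contained at the cost of a little extra work (all four of your one-line arguments, including the lower bound $\lambda_{\min}\{\bm\Psi\} \geq \lambda_{\max}^{-1}\{\bm K_n\}\lambda_{\min}\{\Cmat\Cmat'\}$, are sound). Second, for the key Hessian-perturbation estimate the paper controls $\norm{\Kna - \bm K_n}_{\infty}$ column by column using a Fundamental-Theorem-of-Calculus representation and then invokes the symmetric-matrix inequality $\norm{\cdot}_2 \leq \norm{\cdot}_{\infty}$ (Lemma S.8 of \cite{Shi2019}), while you bound the quadratic form $\sup_{\norm{\bm u}_2 = 1}|\bm u'(\Kna - \bm K_n)\bm u|$ directly by a mean value expansion; the two bookkeepings place the factor $\sqrt{s+m}$ differently (paper: $\ell_1$-to-$\ell_2$ on a row of length $s+m$; you: $\ell_2$-to-$\ell_1$ on the parameter increment) but land at the identical rate, and your uniformity-in-$\bm u$ step is legitimate because \ref{assumption:glm:third derivative bound} is a supremum over $\mathcal{N}_0$, which absorbs the $\bm u$-dependence of the mean-value point. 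Third, your Neumann-series treatment of \eqref{eqn:intermediate lemma 6} is algebraically the same as the paper's bound $\norm{A^{-1} - \mathbf I_r}_2 \leq \norm{A^{-1}}_2\norm{A - \mathbf I_r}_2$ with $A = \mathbf I_r + \mathbf E$, just phrased as a perturbation expansion. Net effect: your version avoids all three auxiliary lemmas of \cite{Shi2019} (Lemmas 5.1, S.5, S.8) and is fully self-contained, while the paper's version is shorter by outsourcing those ingredients.
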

	\noindent Note that \eqref{eqn:intermediate lemma 1} - \eqref{eqn:intermediate lemma 4} come from Lemma 5.1 of \cite{Shi2019}. We include them here for ease of reference.
	
	\begin{proof}[Proof of Lemma \ref{lemma:oracle test statistic distribution}]
		Define $\bm \omega_n = - \sqrt{n}  \begin{bsmallmatrix} \Cmat & \bm 0 \end{bsmallmatrix}\bm K_n^{-1} \nabla_{\Mset \cup \Sset} \ell_n(\bvec^*) $ and
		$T_0 = (\bm \omega_n + \sqrt{n} \bm h_n)' \bm \Psi^{-1}\\ (\bm \omega_n + \sqrt{n} \bm h_n)/\phi^*$.
		For ease of notation, we let $T_W = T_W(\hat \bvec_0^{\oracle})$, $T_S = T_S(\hat \bvec_a^{\oracle})$, and $T_L = T_L(\hat \bvec_a^{\oracle}, \hat \bvec_0^{\oracle})$ throughout the proof.
		The test statistics can be written as
		\begin{eqnarray*}
			T_W & = & n(\Cmat \hat{\bvec}_{a, \Mset}^{\oracle} - \bm{t})'
			\left( \begin{bsmallmatrix} \Cmat & \bm 0 \end{bsmallmatrix} \Kna^{-1} \begin{bsmallmatrix} \Cmat' \\ \bm 0 \end{bsmallmatrix} \right)^{-1}(\Cmat \hat{\bvec}_{a, \Mset}^{\oracle} -\bm{t})/\hat{\phi}\\
			T_S & = & n \nabla_{\Mset \cup \Sset} \ell_n(\hat{\bvec}_0^{\oracle})' \Kno^{-1} \nabla_{\Mset \cup \Sset} \ell_n(\hat{\bvec}_0^{\oracle})/\hat{\phi}\\
			T_L & = & -2n(\ell_n(\hat{\bvec}_a^{\oracle}) - \ell_n(\hat{\bvec}_0^{\oracle}) )/\hat{\phi}.
		\end{eqnarray*}
		
		This proof is divided into four parts:
		We will show (1) $T_W = T_0 + o_p(r)$; (2) $T_S = T_0 + o_p(r)$; (3) $T_L = T_0 + o_p(r)$; and (4) for any $T$ satisfying $T = T_0 + o_p(r)$, $\sup_x |\Pr(T \leq x) - \Pr( \chi^2(r,\nu_n) \leq x )| \to 0 $ as $n \to \infty$.\\
		
		\noindent \textbf{Part 1.}
		Lemma \ref{lemma:oracle asymptotics} establishes that
		$$
		\sqrt{n} (\hat \bvec_{a, \Mset \cup \Sset }^{\oracle}  - \bvec_{\Mset \cup \Sset}^*) = -\sqrt{n} \bm K_n^{-1} \nabla_{\Mset \cup \Sset} \ell_n(\bvec^*) + \bm R_a
		$$
		where $\norm{\bm R_a}_2 = o_p(1) $.
		Left-multiplying by $\begin{bsmallmatrix} \Cmat & \bm 0 \end{bsmallmatrix}$, we find
		$$
		\sqrt{n} \begin{bsmallmatrix} \Cmat & \bm 0 \end{bsmallmatrix} (\hat \bvec_{a, \Mset \cup \Sset }^{\oracle}  - \bvec_{\Mset \cup \Sset}^*)
		= \bm \omega_n + \begin{bsmallmatrix} \Cmat & \bm 0 \end{bsmallmatrix}  \bm R_a.
		$$
		We know that $\begin{bsmallmatrix} \Cmat & \bm 0 \end{bsmallmatrix}\bvec_{\Mset \cup \Sset}^* = \Cmat \bvec_{\Mset}^* = \bm t + \bm h_n$ by \ref{assumption:hn order} and see that 
		$\begin{bsmallmatrix} \Cmat & \bm 0 \end{bsmallmatrix} \hat \bvec_{a, \Mset \cup \Sset }^{\oracle} = \Cmat \hat \bvec_{a,\Mset}^{\oracle} $.
		Therefore we can simplify the previous expression to
		$$
		\sqrt{n}(\Cmat \hat \bvec_{a, \Mset \cup \Sset}^{\oracle} - \bm t) 
		= \bm \omega_n + \sqrt{n} \bm h_n + \begin{bsmallmatrix} \Cmat & \bm 0 \end{bsmallmatrix}  \bm R_a.
		$$
		Multiplying by $\bm \Psi^{-1/2}$ and applying \eqref{eqn:intermediate lemma 5} from Lemma \ref{lemma:intermediate bounds for oracle}, we find 
		\begin{equation}
			\sqrt{n}\bm \Psi^{-1/2}(\Cmat \hat \bvec_{a, \Mset \cup \Sset}^{\oracle} - \bm t) 
			= \bm \Psi^{-1/2} \bm \omega_n + \sqrt{n} \bm \Psi^{-1/2}\bm h_n + o_p(1). \label{eqn:TW proof 1}
		\end{equation}
		
		Because the distribution of $y$ belongs to an exponential family, we find that\\
		$
		\E\left[ \norm{ \bm \Psi^{-1/2} \bm \omega_n }_2^2  \right]
		= \tr \left\{ \bm \Psi^{-1/2} \E[ \bm \omega_n \bm \omega_n' ] \bm \Psi^{-1/2} \right\} 
		= \phi^* \tr \left\{  \bm \Psi^{-1/2} \bm \Psi \bm \Psi^{-1/2} \right\}
		= r \phi^*.
		$
		Therefore by Markov's inequality,
		\begin{equation}
			\norm{ \bm \Psi^{-1/2} \bm \omega_n }_2 = O_p( \sqrt{r} ) . \label{eqn:TW proof 2} 
		\end{equation}
		In addition, \ref{assumption:hn order} and \eqref{eqn:intermediate lemma 4} in Lemma \ref{lemma:intermediate bounds for oracle} imply that
		\begin{equation}
			\norm{ \sqrt{n} \bm \Psi^{-1/2} \bm h_n }_2 = O_p(\sqrt{r}). \label{eqn:TW proof 3}
		\end{equation}
		Combining \eqref{eqn:TW proof 1}, \eqref{eqn:TW proof 2}, and \eqref{eqn:TW proof 3}, we have
		\begin{equation}
			\norm{ \sqrt{n}\bm \Psi^{-1/2}(\Cmat \hat \bvec_{a, \Mset \cup \Sset}^{\oracle} - \bm t) }_2 = O_p(\sqrt{r}) \label{eqn:TW proof 4}.
		\end{equation}
		
		Define $T_{W,0} = n( \Cmat \hat \bvec_{0, \Mset}^{\oracle} - \bm t )'\bm \Psi^{-1} ( \Cmat \hat \bvec_{0, \Mset}^{\oracle} -  \bm t )/\hat \phi$.
		By \eqref{eqn:TW proof 4} and \eqref{eqn:intermediate lemma 5} in Lemma \ref{lemma:intermediate bounds for oracle}, we have
		\begin{align}
			\hat \phi | T_W - T_{W,0} |
			& = \norm{ \sqrt{n} \bm \Psi^{-1/2} ( \Cmat \hat \bvec_{0, \Mset}^{\oracle} - \bm t ) }_2^2 \norm{ \bm \Psi^{1/2} \left( \begin{bsmallmatrix} \Cmat & \bm 0 \end{bsmallmatrix} \Kna^{-1} \begin{bsmallmatrix} \Cmat' \\ \bm 0 \end{bsmallmatrix} \right)^{-1} \bm \Psi^{1/2} - \mathbf{I}_r }_2 \nonumber \\
			& =  O_p\left(\frac{r(s+m)}{\sqrt{n} } \right) \nonumber\\
			& = o_p(r) \label{eqn:TW proof 5}
		\end{align}
		since $s+m = o(n^{1/3})$.
		Because $\phi^* > 0$, our assumption that $\hat \phi = \phi^*  + o_p(1)$ implies $1/\hat \phi = O_p(1)$, a result we will use throughout this proof.
		Combining this with \eqref{eqn:TW proof 5}, we see that
		$ T_W = T_{W,0} + o_p(r)$.
		
		Define $T_{W,1} = \norm{ \bm \Psi^{-1/2} \bm \omega_n + \sqrt{n} \bm \Psi^{-1/2} \bm h_n }_2^2/\hat \phi$.
		Taking \eqref{eqn:TW proof 1} and applying the Cauchy-Schwarz inequality, we find
		\begin{align*}
			\hat \phi T_{W,0}
			& = \norm{ \bm \Psi^{-1/2} \bm \omega_n + \sqrt{n} \bm \Psi^{-1/2} \bm h_n + o_p(1) }_2^2 \\
			& = \norm{ \bm \Psi^{-1/2} \bm \omega_n + \sqrt{n} \bm \Psi^{-1/2} \bm h_n}_2^2 +  o_p(1) + o_p\left(\bm \Psi^{-1/2} \bm \omega_n + \sqrt{n} \bm \Psi^{-1/2} \bm h_n \right)\\
			& = \norm{ \bm \Psi^{-1/2} \bm \omega_n + \sqrt{n} \bm \Psi^{-1/2} \bm h_n}_2^2 +  o_p(1) + O_p(\sqrt{r} ) \\
			& = \hat \phi T_{W,1} + o_p(r),
		\end{align*}
		where the third equality follows from \eqref{eqn:TW proof 2} and \eqref{eqn:TW proof 3}.
		Since  $1/\hat \phi = O_p(1)$, this implies $ T_{W, 0} = T_{W,1} + o_p(r) $.
		
		We see that
		\begin{equation}
			|T_{W,1} - T_0 | = \frac{|\phi^* - \hat \phi|}{\hat \phi \phi^*} \norm{\bm \Psi^{-1/2} \bm \omega_n + \sqrt{n} \bm \Psi^{-1/2} \bm h_n}_2^2 = o_p(r) \label{eqn:TW proof 6}
		\end{equation}
		due to \eqref{eqn:TW proof 2}, \eqref{eqn:TW proof 3}, the fact that $1/\hat \phi = O_p(1)$, and our assumption that $|\hat \phi - \phi^*| = o_p(1)$.
		Therefore we have $T_W = T_{W,0} + o_p(r) = T_{W,1} + o_p(r) =T_{0} + o_p(r)$, completing this portion of the proof.\\
		
		\noindent \textbf{Part 2.}
		In \eqref{eqn:glm:mvt for MuS gradient} in the proof of Lemma \ref{lemma:oracle asymptotics}, we established that 
		$$
		\nabla_{\Mset \cup \Sset} \ell_n(\hat \bvec_0^{\oracle}) 
		= \nabla_{\Mset \cup \Sset} \ell_n(\bvec^*) + \bm K_n (\hat \bvec_{0,\Mset \cup \Sset}^{\oracle} - \bvec^*_{\Mset \cup \Sset} ) + \bm R_1
		$$
		where $\norm{\bm R_1}_2 = o_p(n^{-1/2})$.
		Multiplying both sides by $\sqrt{n}$, we find 
		\begin{equation}
			\sqrt{n} \nabla_{\Mset \cup \Sset} \ell_n(\hat \bvec_0^{\oracle}) 
			= \sqrt{n} \nabla_{\Mset \cup \Sset} \ell_n(\bvec^*) + \sqrt{n} \bm K_n (\hat \bvec_{0,\Mset \cup \Sset}^{\oracle} - \bvec^*_{\Mset \cup \Sset} ) + o_p(1). \label{eqn:TS proof 1}
		\end{equation}
		Lemma \ref{lemma:oracle asymptotics} and \eqref{eqn:Kn term expansion for reduced estimator} in the proof of Lemma \ref{lemma:oracle asymptotics} provide that
		\begin{align*}
			\sqrt{n}(\hat{\bvec}^{\oracle}_{0, \Mset \cup \Sset} - \bvec_{\Mset \cup \Sset}^* )
			= & \sqrt{n} \bm{K}_n^{-1/2} (\bm P_n - \bm I) \bm {K}_n^{-1/2} \nabla_{\Mset \cup \Sset} \ell_n(\bvec^*) \notag \\
			& -  \sqrt{n} \bm K_n^{-1/2} \bm P_n \bm K_n^{1/2} \begin{bmatrix} \Cmat'(\Cmat \Cmat')^{-1} \bm h_n \\ \bm 0 \end{bmatrix}
			+ \bm R_2\\
			= & \sqrt{n} \bm{K}_n^{-1/2} (\bm P_n - \bm I) \bm {K}_n^{-1/2} \nabla_{\Mset \cup \Sset} \ell_n(\bvec^*) \notag \\
			& -  \sqrt{n} \bm K_n^{-1} \begin{bmatrix} \Cmat'\\ \bm 0 \end{bmatrix} \bm \Psi^{-1} \bm h_n 
			+ \bm R_2
		\end{align*}
		where $\norm{\bm R_2}_2 = o_p(1) $.
		Due to \eqref{eqn:intermediate lemma 1}, we have
		$ \norm{ \bm K_n \bm R_2 }_2 \leq \norm{ \bm K_n }_2 \norm{ \bm R_2 }_2 = o_p(1)$.
		Therefore multiplying the previous expression by $\bm K_n$ yields
		\begin{align}
			\sqrt{n}\bm K_n(\hat{\bvec}^{\oracle}_{0, \Mset \cup \Sset} - \bvec_{\Mset \cup \Sset}^* )
			= & \sqrt{n} \bm{K}_n^{1/2} (\bm P_n - \bm I) \bm {K}_n^{-1/2} \nabla_{\Mset \cup \Sset} \ell_n(\bvec^*) \nonumber \\ 
			& - \sqrt{n} \begin{bmatrix} \Cmat'\\ \bm 0 \end{bmatrix} \bm \Psi^{-1} \bm h_n  
			+ o_p(1). \label{eqn:TS proof 2}
		\end{align}
		
		Plugging \eqref{eqn:TS proof 2} into \eqref{eqn:TS proof 1}, we find
		\begin{equation*}
			\sqrt{n} \nabla_{\Mset \cup \Sset} \ell_n(\hat \bvec_0^{\oracle})
			= \sqrt{n} \bm{K}_n^{1/2} \bm P_n \bm {K}_n^{-1/2} \nabla_{\Mset \cup \Sset} \ell_n(\bvec^*)
			- \sqrt{n} \begin{bmatrix} \Cmat'\\ \bm 0 \end{bmatrix} \bm \Psi^{-1} \bm h_n  
			+ o_p(1)
		\end{equation*}
		We see that $\norm{\bm K_n^{-1/2}}_2 = \lambda_{\max}\{ \bm K_n^{-1/2} \} = O_p(1)$ by \eqref{eqn:intermediate lemma 3}. 
		Therefore multiplying both sides of the previous expression by $\bm K_n^{-1/2}$ yields
		\begin{align}
			\sqrt{n} \bm K_n^{-1/2} \nabla_{\Mset \cup \Sset} \ell_n(\hat \bvec_0^{\oracle})
			= & \sqrt{n} \bm P_n \bm {K}_n^{-1/2} \nabla_{\Mset \cup \Sset} \ell_n(\bvec^*) \nonumber \\
			& - \sqrt{n} \bm K_n^{-1/2} \begin{bmatrix} \Cmat'\\ \bm 0 \end{bmatrix} \bm \Psi^{-1} \bm h_n  
			+ o_p(1). \label{eqn:TS proof 3}
		\end{align}
		Since the distribution of $y$ belongs to an exponential family and $\bm P_n$ is a projection matrix, we find
		$
		\E \left[ \norm{\sqrt{n} \bm P_n \bm {K}_n^{-1/2} \nabla_{\Mset \cup \Sset} \ell_n(\bvec^*)}_2^2 \right]
		= \tr \left\{ \bm P_n \bm {K}_n^{-1/2} \bm K_n  \bm {K}_n^{-1/2} \bm P_n  \right\}
		= \tr \{ \bm P_n \} = \tr \{ \bm \Psi \bm \Psi^{-1} \} = \tr \{ \mathbf{I}_r \} = r.
		$
		Therefore by Markov's inequality
		\begin{equation}
			\norm{\sqrt{n} \bm P_n \bm {K}_n^{-1/2} \nabla_{\Mset \cup \Sset} \ell_n(\bvec^*)}_2 = O_p(\sqrt{r}). \label{eqn:TS proof 4a}
		\end{equation}
		In addition, we see that
		\begin{align}
			\norm{\sqrt{n} \bm K_n^{-1/2} \begin{bmatrix} \Cmat'\\ \bm 0 \end{bmatrix} \bm \Psi^{-1} \bm h_n}_2
			& \leq \norm{\bm K_n^{-1/2}}_2 \norm{\begin{bmatrix} \Cmat'\\ \bm 0 \end{bmatrix} \bm \Psi^{-1/2}}_2\norm{ \sqrt{n}  \bm \Psi^{-1/2} \bm h_n}_2 \nonumber \\
			& = O_p(\sqrt{r}) \label{eqn:TS proof 4b}
		\end{align}
		by \eqref{eqn:intermediate lemma 3}, \eqref{eqn:intermediate lemma 5}, and \eqref{eqn:TW proof 3}.
		Applying these bounds to \eqref{eqn:TS proof 3}, we have
		\begin{equation}
			\norm{\sqrt{n} \bm K_n^{-1/2} \nabla_{\Mset \cup \Sset} \ell_n(\hat \bvec_0^{\oracle})}_2 = O_p(\sqrt{r}). \label{eqn:TS proof 5}
		\end{equation}
		
		Define $T_{S,0} =  n \nabla_{\Mset \cup \Sset} \ell_n(\hat{\bvec}_0^{\oracle})' \bm K_n^{-1} \nabla_{\Mset \cup \Sset} \ell_n(\hat{\bvec}_0^{\oracle})/\hat{\phi}$.
		By \eqref{eqn:TS proof 5} and \eqref{eqn:intermediate lemma 7}, we have
		\begin{align*}
			\hat \phi | T_S - T_{S,0} |
			& = \norm{\sqrt{n} \bm K_n^{-1/2} \nabla_{\Mset \cup \Sset} \ell_n(\hat \bvec_0^{\oracle})}_2^2 \norm{ \bm K_n^{1/2} \Kno^{-1} \bm K_n^{1/2} - \mathbf{I}_{m+s} }_2 \\
			& = O_p(r) O_p\left( \frac{s+m}{\sqrt n} \right)\\
			& = o_p(r)
		\end{align*}
		since $s+m = o(n^{1/3})$.
		Because $1/\hat \phi = O_p(1)$, this implies $T_S = T_{S,0} + o_p(r)$.
		
		Define $T_{S,1} = \norm{\sqrt{n} \bm P_n \bm {K}_n^{-1/2} \nabla_{\Mset \cup \Sset} \ell_n(\bvec^*) - \sqrt{n} \bm K_n^{-1/2} \begin{bmatrix} \Cmat'\\ \bm 0 \end{bmatrix} \bm \Psi^{-1} \bm h_n }_2^2/\hat \phi$.
		Taking \eqref{eqn:TS proof 3} and applying the Cauchy-Schwarz inequality, we find
		\begin{align}
			\hat \phi T_{S,0}
			& = \norm{\sqrt{n} \bm K_n^{-1/2} \nabla_{\Mset \cup \Sset} \ell_n(\hat \bvec_0^{\oracle})}_2^2 \nonumber \\
			& = \norm{\sqrt{n} \bm P_n \bm {K}_n^{-1/2} \nabla_{\Mset \cup \Sset} \ell_n(\bvec^*) - \sqrt{n} \bm K_n^{-1/2} \begin{bmatrix} \Cmat'\\ \bm 0 \end{bmatrix} \bm \Psi^{-1} \bm h_n + o_p(1)}_2^2 \nonumber \\
			& = \hat \phi T_{S,1} + o_p(1) + O_p(\sqrt{r}) \nonumber\\
			& = \hat \phi T_{S,1} + o_p(r), \label{eqn:TS proof 6}
		\end{align}
		with the penultimate equality following from \eqref{eqn:TS proof 4a} and \eqref{eqn:TS proof 4b}.
		This implies $T_{S,0} = T_{S,1} + o_p(r)$ since $1/\hat \phi = O_p(1)$.
		
		We further see that
		\begin{align}
			T_{S,1}
			& = \norm{ \bm K_n^{-1/2} \begin{bmatrix} \Cmat'\\ \bm 0 \end{bmatrix} \bm \Psi^{-1/2} \left( \bm \Psi^{-1/2}\bm \omega_n + \sqrt{n} \bm \Psi^{-1/2} \bm h_n  \right) }_2^2/\hat \phi \nonumber \\
			& = \left( \bm \Psi^{-1/2}\bm \omega_n + \sqrt{n} \bm \Psi^{-1/2} \bm h_n  \right)' \bm \Psi^{-1/2} \bm \Psi \bm \Psi^{-1/2} \left( \bm \Psi^{-1/2}\bm \omega_n + \sqrt{n} \bm \Psi^{-1/2} \bm h_n  \right) \nonumber\\
			& =  \norm{ \bm \Psi^{-1/2}\bm \omega_n + \sqrt{n} \bm \Psi^{-1/2} \bm h_n }_2^2/\hat \phi. \label{eqn:TS proof 7}
		\end{align}
		Thus 
		\begin{equation}
			|T_{S,1} - T_0| = \frac{|\phi^* - \hat \phi|}{\hat \phi \phi^*} \norm{ \bm \Psi^{-1/2}\bm \omega_n + \sqrt{n} \bm \Psi^{-1/2} \bm h_n }_2^2 = o_p(r) \label{eqn:TS proof 8}
		\end{equation}
		by the same arguments we used for \eqref{eqn:TW proof 6}.
		All together, we have $T_S = T_{S,0} + o_p(r) = T_{S,1} + o_p(r) = T_{0} + o_p(r)$, completing this portion of the proof.\\
		
		\noindent \textbf{Part 3.}
		Combining \eqref{eqn:reduced oracle limiting expression} and \eqref{eqn:full oracle limiting expression} from Lemma \ref{lemma:oracle asymptotics} and \eqref{eqn:Kn term expansion for reduced estimator} from the proof of Lemma \ref{lemma:oracle asymptotics}, we have
		\begin{align}
			\sqrt{n}( \hat \bvec_{a,\Mset \cup \Sset}^{\oracle} - \hat \bvec_{0,\Mset \cup \Sset}^{\oracle})
			= & -\sqrt{n} \bm K_n^{-1/2} \bm P_n \bm K_n^{-1/2} \nabla_{\Mset \cup \Sset} \ell_n(\bvec^*) \nonumber \\
			& + \sqrt{n} \bm K_n^{-1} \begin{bmatrix} \Cmat'\\ \bm 0 \end{bmatrix} \bm \Psi^{-1} \bm h_n + o_p(1). \label{eqn:TL proof 1}
		\end{align}
		Applying the bounds from \eqref{eqn:intermediate lemma 3}, \eqref{eqn:TS proof 4a}, and \eqref{eqn:TS proof 4b} to the previous expression, we find
		\begin{equation}
			\norm{ \sqrt{n}( \hat \bvec_{a,\Mset \cup \Sset}^{\oracle} - \hat \bvec_{0,\Mset \cup \Sset}^{\oracle})  }_2 = O_p(\sqrt{r}) .\label{eqn:TL proof 2}
		\end{equation}
		
		We know that $\hat \bvec_{a, (\Mset \cup \Sset)^c}^{\oracle} = \hat \bvec_{0, (\Mset \cup \Sset)^c}^{\oracle} = \bm 0$. 
		Taking a Taylor series expansion and simplifying yields
		\begin{align}
			\ell_n(\hat \bvec_0^{\oracle}) - \ell_n(\hat \bvec_a^{\oracle})
			& = (\hat \bvec_{0, \Mset \cup \Sset}^{\oracle}  - \hat \bvec_{a, \Mset \cup \Sset}^{\oracle})'\nabla_{\Mset \cup \Sset} \ell_n(\hat \bvec_a^{\oracle} ) \nonumber \\
			& \hspace{0.2cm} + \frac{1}{2} (\hat \bvec_{0, \Mset \cup \Sset}^{\oracle}  - \hat \bvec_{a, \Mset \cup \Sset}^{\oracle})'\nabla_{\Mset \cup \Sset}^2 \ell_n(\bar \bvec )  (\hat \bvec_{0, \Mset \cup \Sset}^{\oracle}  - \hat \bvec_{a, \Mset \cup \Sset}^{\oracle})  \nonumber \\
			& = (\hat \bvec_{0, \Mset \cup \Sset}^{\oracle}  - \hat \bvec_{a, \Mset \cup \Sset}^{\oracle})'\nabla_{\Mset \cup \Sset} \ell_n(\hat \bvec_a^{\oracle} ) \nonumber \\
			& \hspace{0.2cm} + \frac{1}{2} (\hat \bvec_{0, \Mset \cup \Sset}^{\oracle}  - \hat \bvec_{a, \Mset \cup \Sset}^{\oracle})'\nabla_{\Mset \cup \Sset}^2 \ell_n(\hat \bvec_a^{\oracle})  (\hat \bvec_{0, \Mset \cup \Sset}^{\oracle}  - \hat \bvec_{a, \Mset \cup \Sset}^{\oracle}) \nonumber \\
			& \hspace{0.2cm} + \frac{1}{2} (\hat \bvec_{0, \Mset \cup \Sset}^{\oracle}  - \hat \bvec_{a, \Mset \cup \Sset}^{\oracle})' \bm R, \label{eqn:TL proof 3}
		\end{align}
		where $\bar \bvec$ lies on the line segment between $\hat \bvec_a^{\oracle}$ and $\hat \bvec_0^{\oracle}$ and
		$\bm R = (\nabla_{\Mset \cup \Sset}^2 \ell_n(\bar \bvec) - \nabla_{\Mset \cup \Sset}^2 \ell_n(\hat \bvec_a^{\oracle}))  (\hat \bvec_{0, \Mset \cup \Sset}^{\oracle}  - \hat \bvec_{a, \Mset \cup \Sset}^{\oracle})$.
		Note that this expression is similar to \eqref{eqn:glm:mvt for gradient at oracle} and \eqref{eqn:glm:mvt for MuS gradient} in the proof of Lemma \ref{lemma:oracle asymptotics}.
		By \eqref{eqn:reduced oracle consistency} and \eqref{eqn:full oracle consistency} in Lemma \ref{lemma:oracle asymptotics}, we have $\Pr(\hat \bvec_0^{\oracle} \in \mathcal{N}_0) \to 1$ and $\Pr(\hat \bvec_a^{\oracle} \in \mathcal{N}_0) \to 1$.
		Therefore $\Pr(\bar \bvec \in \mathcal{N}_0) \to 1$. 
		Applying the mean value theorem as in \eqref{eqn:glm:Rj bound}, we find
		\begin{align*}
			\norm{\bm R}_{\max}
			& \leq \max_{j \in \Mset \cup \Sset} \lambda_{\max}\left\{ \frac{1}{n} \xmat_{\Mset \cup \Sset}' \diag\{ |\xmat_j| \circ |\bm b'''(\xmat \bar {\bar \bvec}^{(j)} ) | \}\xmat_{\Mset \cup \Sset} \right\}\\
			& \hspace{0.2cm} \times \norm{\hat{\bvec}^{\oracle}_{0, \Mset \cup \Sset} - \hat{\bvec}^{\oracle}_{a, \Mset \cup \Sset}}_2^2 
		\end{align*}
		where each $\bar{\bar \bvec}^{(j)}$ lies on the line segment between $\hat \bvec_a^{\oracle}$ and $\bar \bvec$.
		Therefore by \ref{assumption:glm:third derivative bound} and \eqref{eqn:TL proof 2},
		$$
		\norm{\bm R}_{\max} 
		= O(1) \norm{\hat{\bvec}^{\oracle}_{0, \Mset \cup \Sset} - \hat{\bvec}^{\oracle}_{a, \Mset \cup \Sset}}_2^2 
		= O_p\left(\frac{r}{n}\right).
		$$	
		As a consequence, we see that
		\begin{align*}
			\norm{(\hat \bvec_{0, \Mset \cup \Sset}^{\oracle}  - \hat \bvec_{a, \Mset \cup \Sset}^{\oracle})' \bm R}_2
			& \leq \norm{(\hat \bvec_{0, \Mset \cup \Sset}^{\oracle}  - \hat \bvec_{a, \Mset \cup \Sset}^{\oracle})}_2 \sqrt{s+m} \norm{\bm R}_{\max}\\
			& = O_p\left( \sqrt{ \frac{r}{n} }\right)O_p\left( \frac{r\sqrt{s+m}}{n} \right)\\
			& = o_p\left(\frac{\sqrt{r}}{n}\right)
		\end{align*}
		since $r \leq s + m$ and $s+m = o(n^{1/3})$.
		Using a similar argument, one can show that
		$$
		\norm{ (\hat \bvec_{0, \Mset \cup \Sset}^{\oracle}  - \hat \bvec_{a, \Mset \cup \Sset}^{\oracle})'(\nabla_{\Mset \cup \Sset}^2 \ell_n(\hat \bvec_a^{\oracle}) - \bm K_n )  (\hat \bvec_{0, \Mset \cup \Sset}^{\oracle}  - \hat \bvec_{a, \Mset \cup \Sset}^{\oracle}) }_2
		= o_p\left(\frac{\sqrt{r}}{n}\right).
		$$
		As such, \eqref{eqn:TL proof 3} simplifies to
		\begin{align*}
			\ell_n(\hat \bvec_0^{\oracle}) - \ell_n(\hat \bvec_a^{\oracle})
			& = (\hat \bvec_{0, \Mset \cup \Sset}^{\oracle}  - \hat \bvec_{a, \Mset \cup \Sset}^{\oracle})'\nabla_{\Mset \cup \Sset} \ell_n(\hat \bvec_a^{\oracle} ) \nonumber \\
			& \hspace{0.2cm} + \frac{1}{2} (\hat \bvec_{0, \Mset \cup \Sset}^{\oracle}  - \hat \bvec_{a, \Mset \cup \Sset}^{\oracle})' \bm K_n  (\hat \bvec_{0, \Mset \cup \Sset}^{\oracle}  - \hat \bvec_{a, \Mset \cup \Sset}^{\oracle}) \nonumber \\
			& \hspace{0.2cm} + o_p\left(\frac{\sqrt{r}}{n}\right).
		\end{align*}
		By \ref{assumption:full oracle unique}, $\nabla_{\Mset \cup \Sset} \ell_n(\hat \bvec_a^{\oracle} ) = \bm 0$. Therefore the previous expression simplifies further to
		\begin{equation}
			\ell_n(\hat \bvec_0^{\oracle}) - \ell_n(\hat \bvec_a^{\oracle})
			= \frac{1}{2} (\hat \bvec_{0, \Mset \cup \Sset}^{\oracle}  - \hat \bvec_{a, \Mset \cup \Sset}^{\oracle})' \bm K_n  (\hat \bvec_{0, \Mset \cup \Sset}^{\oracle}  - \hat \bvec_{a, \Mset \cup \Sset}^{\oracle})
			+  o_p\left(\frac{\sqrt{r}}{n}\right). \label{eqn:TL proof 4}
		\end{equation}
		
		Multiplying \eqref{eqn:TL proof 1} by $\bm K_n^{1/2}$ and simplifying, we find
		\begin{align}
			\norm{ \sqrt{n} \bm K_n^{1/2}( \hat \bvec_{a,\Mset \cup \Sset}^{\oracle} - \hat \bvec_{0,\Mset \cup \Sset}^{\oracle})  }_2^2
			& = \bigg \lVert -\sqrt{n} \bm P_n \bm K_n^{-1/2} \nabla_{\Mset \cup \Sset} \ell_n(\bvec^*) \nonumber \\
			& \hspace{0.75cm} + \sqrt{n} \bm K_n^{-1/2} \begin{bmatrix} \Cmat'\\ \bm 0 \end{bmatrix} \bm \Psi^{-1} \bm h_n + o_p(1) \bigg \rVert_2^2 \nonumber \\
			& = \bigg \lVert -\sqrt{n} \bm P_n \bm K_n^{-1/2} \nabla_{\Mset \cup \Sset} \ell_n(\bvec^*) \nonumber \\
			& \hspace{0.75cm} + \sqrt{n} \bm K_n^{-1/2} \begin{bmatrix} \Cmat'\\ \bm 0 \end{bmatrix} \bm \Psi^{-1} \bm h_n \bigg \rVert_2^2 + o_p(r) \nonumber\\
			& = \norm{ \bm \Psi^{-1/2} \bm \omega_n + \sqrt{n} \bm \Psi^{-1/2} \bm h_n }_2^2 + o_p(r), \label{eqn:TL proof 5}
		\end{align}
		where the second and third equalities follow from the same arguments used in \eqref{eqn:TS proof 6} and \eqref{eqn:TS proof 7}, respectively.
		Combining \eqref{eqn:TL proof 4} and \eqref{eqn:TL proof 5}, we find
		\begin{align*}
			\hat \phi T_L
			& = 2n( \ell_n(\hat \bvec_0^{\oracle}) - \ell_n(\hat \bvec_a^{\oracle}) )\\
			& = \norm{ \bm \Psi^{-1/2} \bm \omega_n + \sqrt{n} \bm \Psi^{-1/2} \bm h_n }_2^2 + o_p(r) + o_p(\sqrt{r})\\
			& = \norm{ \bm \Psi^{-1/2} \bm \omega_n + \sqrt{n} \bm \Psi^{-1/2} \bm h_n }_2^2 + o_p(r)\\
			& = \hat \phi T_{L,1} + o_p(r)
		\end{align*}
		where $T_{L,1} = \norm{ \bm \Psi^{-1/2} \bm \omega_n + \sqrt{n} \bm \Psi^{-1/2} \bm h_n }_2^2/\hat \phi$.
		Since $1/\hat{\phi} = O_p(1)$, this implies 
		$ T_L = T_{L,1} + o_p(r) $.
		Moreover, we find
		\begin{equation*}
			|T_{L,1} - T_0|
			= \frac{\left| \phi^* - \hat \phi \right|}{\hat \phi \phi^*} \norm{ \bm \Psi^{-1/2} \bm \omega_n + \sqrt{n} \bm \Psi^{-1/2} \bm h_n }_2^2 = o_p(r),
		\end{equation*}
		by the same argument used in \eqref{eqn:TW proof 6}.
		Thus we have $T_L = T_{L,1} + o_p(r) = T_0 + o_p(r)$, completing this portion of the proof.\\
		
		\noindent \textbf{Part 4.}
		Define $\bm \xi_i = \frac{1}{\sqrt{n \phi^*}} \bm \Psi^{-1/2} \begin{bmatrix} \Cmat & \bm 0 \end{bmatrix} \bm K_n^{-1} \xmat_{i,\Mset \cup \Sset}'(y_i  - b'(\xvec_i'\bvec^*))$ for $i = 1, \ldots, n$.
		Because the observations are independent, the $\bm \xi_i$ are independent.
		In addition, one can show that $\E[\bm \xi_i] = \bm 0$ for all $i$ and $\sum_{i = 1}^{n}\Cov(\bm \xi_i) = \mathbf{I}_r$.
		As such, we can apply Lemma S.6 from \cite{Shi2019} (a special case of Theorem 1 from \cite{Bentkus2005}) and conclude that
		\begin{equation}
			\sup_{\mathcal{C}}\left| \Pr\left( \sum_{i=1}^n \bm \xi_i \in \mathcal{C} \right) - \Pr(\bm Z \in \mathcal{C}) \right|
			\leq c_0 r^{1/4} \sum_{i=1}^n \E\left[ \norm{ \bm \xi_i }_2^3\right] \label{eqn:sup C prob deviation}
		\end{equation}
		where $c_0$ is a constant, $\bm Z \sim N(\bm 0, \mathbf{I}_r)$, and the supremum is taken over all convex sets $\mathcal C$ in $\reals^r$. 
		
		Next, we will show that $r^{1/4} \sum_{i=1}^n \E\left[ \norm{ \bm \xi_i }_2^3\right] \to 0$.
		Condition \ref{assumption:glm:expectation bound for chernoff} implies that
		\begin{align*}
			\max_{1 \leq i \leq n} \E \left[ \frac{| y_i  - b'(\xvec_i'\bvec^*) |^3}{6 M}\right]
			&\leq \max_{1 \leq i \leq n} \E \left[ \exp\left( \frac{| y_i  - b'(\xvec_i'\bvec^*) |}{M} \right) - 1 - \frac{| y_i  - b'(\xvec_i'\bvec^*) |}{M} \right]M^2\\
			&\leq \frac{v_0}{2}
		\end{align*}
		for all $n$.
		By extension, $\max_{1 \leq i \leq n} \E \left[| y_i  - b'(\xvec_i'\bvec^*) |^3\right] = O(1)$.
		In addition, we see that
		$
		\norm{ \bm \Psi^{-1/2} \begin{bsmallmatrix} \Cmat & \bm 0 \end{bsmallmatrix} \bm K_n^{-1/2} }_2
		= \lambda_{\max}^{1/2} \left\{  \bm \Psi^{-1/2} \begin{bsmallmatrix} \Cmat & \bm 0 \end{bsmallmatrix} \bm K_n^{-1} \begin{bsmallmatrix} \Cmat' \\ \bm 0' \end{bsmallmatrix} \bm \Psi^{-1/2} \right\}
		= \lambda_{\max}\{ \mathbf{I}_r \} 
		= 1
		$.
		Applying these findings, we derive
		\begin{align*}
			r^{1/4} \sum_{i=1}^n \E\left[ \norm{ \bm \xi_i }_2^3\right]
			& = \frac{r^{1/4}}{(n \phi^*)^{3/2}} \sum_{i=1}^n \E \left[ \norm{ \bm \Psi^{-1/2} \begin{bmatrix} \Cmat & \bm 0 \end{bmatrix} \bm K_n^{-1} \xmat_{i,\Mset \cup \Sset}'(y_i  - b'(\xvec_i'\bvec^*)) }_2^3 \right]\\
			& = \frac{r^{1/4}}{(n \phi^*)^{3/2}} \sum_{i=1}^n \norm{ \bm \Psi^{-1/2} \begin{bmatrix} \Cmat & \bm 0 \end{bmatrix} \bm K_n^{-1} \xmat_{i,\Mset \cup \Sset}'}_2^3 \E\left[ |y_i  - b'(\xvec_i'\bvec^*)|^3 \right] \\
			& = O(1) \frac{r^{1/4}}{(n \phi^*)^{3/2}} \sum_{i=1}^n \norm{ \bm \Psi^{-1/2} \begin{bmatrix} \Cmat & \bm 0 \end{bmatrix} \bm K_n^{-1} \xmat_{i,\Mset \cup \Sset}'}_2^3 \\
			& \leq O(1) \frac{r^{1/4}}{(n \phi^*)^{3/2}} \sum_{i=1}^n \norm{ \bm \Psi^{-1/2} \begin{bmatrix} \Cmat & \bm 0 \end{bmatrix} \bm K_n^{-1/2}}_2^3 \norm{\bm K_n^{-1/2} \xmat_{i,\Mset \cup \Sset}'}_2^3 \\
			& = O(1)  \frac{r^{1/4}}{(n \phi^*)^{3/2}} \sum_{i=1}^n (\xmat_{i, \Mset \cup \Sset}\bm K_n^{-1} \xmat_{i, \Mset \cup \Sset}' )^{3/2}\\
			& = o(1)
		\end{align*}
		where the last equality follows from \ref{assumption:lyapunov condition}.
		Returning to \eqref{eqn:sup C prob deviation}, we now have
		\begin{equation*}
			\sup_{\mathcal{C}}\left| \Pr\left( \sum_{i=1}^n \bm \xi_i \in \mathcal{C} \right) - \Pr(\bm Z \in \mathcal{C}) \right|
			\to 0 \text{ \; as \; } n \to \infty.
		\end{equation*}
		
		For each $x \in \reals$, define the set $\mathcal{C}_x = \{ \bm z \in \reals^r : \norm{\bm z + \sqrt{\frac{n}{\phi^*}}  \bm \Psi^{-1/2} \bm h_n }_2^2 \leq x \}$.
		Because each set $\mathcal{C}_x$ is convex, the previous expression implies that
		\begin{equation*}
			\sup_{x}\left| \Pr\left( \sum_{i=1}^n \bm \xi_i \in \mathcal{C}_x \right) - \Pr(\bm Z \in \mathcal{C}_x) \right|
			\to 0 \text{ \; as \; } n \to \infty.
		\end{equation*}
		We find that
		$
		\frac{1}{\sqrt{\phi^*}} \bm \Psi^{-1/2} \bm \omega_n
		= \frac{1}{\sqrt{n \phi^*}} \bm \Psi^{-1/2} \begin{bmatrix} \Cmat & \bm 0 \end{bmatrix} \bm K_n^{-1} \xmat_{\Mset \cup \Sset}'(\yvec - \bm \mu(\xmat \bvec))
		= \sum_{i=1}^n \bm \xi_i.
		$
		Since $T_0 = \norm{\frac{1}{\sqrt{\phi^*}} \bm \Psi^{-1/2} \bm \omega_n + \sqrt{\frac{n}{\phi^*}} \bm \Psi^{-1/2} \bm h_n }_2^2$, we see that $\sum_{i=1}^n \bm \xi_i \in \mathcal{C}_x $ if and only if $T_0 \leq x$.
		In addition, we know from the definition of a non-central chi-square distribution that $\Pr(\bm Z \in \mathcal{C}_x) = \Pr( \chi^2(r, \nu_n) \leq x)$ for all $x \in \reals$, where $\nu_n = n \bm h_n'\bm \Psi^{-1} \bm h_n/\phi^*$.
		Therefore we conclude that
		\begin{equation}
			\sup_{x}\left| \Pr\left( T_0 \leq x \right) - \Pr( \chi^2(r, \nu_n) \leq x ) \right|
			\to 0 \text{ \; as \; } n \to \infty. \label{eqn:T0 test statistic convergence}
		\end{equation}
		
		For any test statistic $T$ satisfying $T = T_0 + o_p(r)$, \eqref{eqn:T0 test statistic convergence} implies that for all $x \in \reals$, $\epsilon > 0$,
		\begin{align*}
			\Pr( \chi^2(r, \nu_n) \leq x - \epsilon r ) + o(1)
			& \leq \Pr(T_0 \leq x - \epsilon r) + o(1) \\
			& \leq \Pr(T \leq x) \\
			& \leq \Pr(T_0 \leq x + \epsilon r) + o(1)
			\leq \Pr( \chi^2(r, \nu_n) \leq x + \epsilon r ) + o(1)	.
		\end{align*}
		Lemma S.7 of \cite{Shi2019} provides
		$$
		\lim_{\epsilon \to 0^+} \limsup_n | \Pr( \chi^2(r, \nu_n) \leq x - \epsilon r ) - \Pr( \chi^2(r, \nu_n) \leq x + \epsilon r ) | = 0.
		$$
		Combining this with the previous expression yields
		$$
		\sup_{x}\left| \Pr\left( T \leq x \right) - \Pr( \chi^2(r, \nu_n) \leq x ) \right|
		\to 0 \text{ \; as \; } n \to \infty.
		$$
		Since $T_W = T_0 + o_p(r)$, $T_S= T_0 + o_p(r)$, and $T_L = T_0 + o_p(r)$, this completes the proof.
		
	\end{proof}
	
	\subsection{Proofs of asymptotic results for LLA solutions}
	\begin{proof}[Proof of Theorem \ref{thm:two step LLA asymptotics}]
		We will focus on proving the results for $\hat{\bvec}_0^{(2)}$ as the proofs for $\hat{\bvec}_a^{(2)}$ follow the same lines.
		
		Let $\epsilon > 0$.
		Corollary \ref{cor:glm lla convergence prob with lasso} implies that there exists $N_1 \in \mathbb{N}$ such that $\Pr( \hat{\bvec}^{(2)}_0\neq \hat{\bvec}^{\oracle}_0 ) < \epsilon/2$ for all $n > N_1$.
		Likewise, \eqref{eqn:reduced oracle consistency} implies that there exist $M > 0$ and $N_2 \in \mathbb{N}$ such that
		$\Pr\left(\norm{ \hat \bvec_{0,\Mset \cup \Sset}^{\oracle} - \bvec^*_{\Mset \cup \Sset} }_2 > M \sqrt{ \frac{s+m-r}{n} }  \right) < \epsilon/2$ for all $n > N_2$.
		Applying the union bound, we find
		\begin{align*}
			\Pr\bigg(  & \norm{ \hat \bvec_{0,\Mset \cup \Sset}^{(2)} - \bvec^*_{\Mset \cup \Sset} }_2 \leq M \sqrt{ \frac{s+m-r}{n} } \bigg)\\
			& \geq \Pr\left( \left\{ \norm{ \hat \bvec_{0,\Mset \cup \Sset}^{\oracle} - \bvec^*_{\Mset \cup \Sset} }_2 \leq M \sqrt{ \frac{s+m-r}{n} } \right\} \cap \left\{ \hat{\bvec}^{(2)}_0 = \hat{\bvec}^{\oracle}_0 \right\} \right)\\
			& \geq 1 - \Pr\left( \norm{ \hat \bvec_{0,\Mset \cup \Sset}^{\oracle} - \bvec^*_{\Mset \cup \Sset} }_2 > M \sqrt{ \frac{s+m-r}{n} } \right) - \Pr(\hat{\bvec}^{(2)}_0 \neq \hat{\bvec}^{\oracle}_0)\\
			& > 1 - \epsilon
		\end{align*}
		for all $n > \max\{N_1, N_2\}$.
		Therefore \eqref{eqn:reduced lla consistency} holds by definition.
		Using this same strategy of leveraging Corollary \ref{cor:glm lla convergence prob with lasso} and applying the union bound, we can show that $\hat{\bvec}^{\oracle}_{0,(\Mset \cup \Sset)^c} = \bm 0$ implies $\Pr( \hat{\bvec}^{(2)}_{0, (\Mset \cup \Sset)^c} = \bm 0 ) \to 1$ as $n \to \infty$ and that \eqref{eqn:reduced oracle limiting expression} implies \eqref{eqn:reduced lla limiting expression}.
	\end{proof}

	\begin{proof}[Proof of Theorem \ref{thm:lla test statistic distribution}]
		We will focus on the proof for the partial penalized Wald test statistic as the same argument applies to all three partial penalized tests.
		Let $T^{(2)} = T_W(\hat \bvec_a^{(2)})$, the partial penalized Wald test statistic evaluated at the two-step LLA estimator, and $T^{\oracle} = T_W(\hat \bvec_a^{\oracle})$, the Wald test statistic evaluated at the oracle estimator. 
		
		Let $\epsilon > 0$.
		Corollary \ref{cor:glm lla convergence prob with lasso} implies that $\exists_{N \in \mathbb{N}}$ such that $\Pr(\hat \bvec_a^{(2)} \neq \hat \bvec_a^{\oracle}) < \epsilon$ for all $n > N$.
		Applying the union bound, we find that for all $n > N$ and all $x \in \reals$ ,
		\begin{align*}
			\Pr(T^{(2)} \leq x ) 
			& \geq \Pr( \{\hat \bvec_a^{(2)} = \hat \bvec_a^{\oracle}\} \cap \{T^{\oracle} \leq x\} ) \\
			& \geq 1 - \Pr(\hat \bvec_a^{(2)} \neq \hat \bvec_a^{\oracle}) - \Pr( T^{\oracle} > x ) \\
			& > \Pr( T^{\oracle} \leq x ) - \epsilon
		\end{align*}
		Likewise, we find that for all $n > N$ and all $x \in \reals$,
		$ \Pr(T^{(2)} > x ) > \Pr( T^{\oracle} > x ) - \epsilon$,
		which implies
		$\Pr( T^{\oracle} \leq x ) > \Pr(T^{(2)} \leq x ) - \epsilon $.
		Therefore we have
		$
		| \Pr(T^{(2)} \leq x ) - \Pr( T^{\oracle} \leq x ) | < \epsilon
		$
		for all $n > N$ and all $x \in \reals$.
		In other words,
		$x \mapsto  \Pr(T^{(2)} \leq x ) - \Pr( T^{\oracle} \leq x )$ converges uniformly to $0$.
		Since $x \mapsto \Pr(T^{(2)} \leq x ) - \Pr( T^{\oracle} \leq x ) $ is bounded for each $n \in \mathbb{N}$, uniform convergence implies
		$\sup_x| \Pr(T^{(2)} \leq x ) - \Pr( T^{\oracle} \leq x ) | \to 0$ as $n \to \infty$.
		
		Lemma \ref{lemma:oracle test statistic distribution} provides that
		$\sup_x|\Pr( T^{\oracle} \leq x ) - \Pr(\chi^2(r, \nu_n) \leq x) | \to 0$ as $n \to \infty$.
		Therefore by the triangle inequality,
		\begin{align*}
			\sup_x| \Pr(T^{(2)} \leq x ) - \Pr(\chi^2(r, \nu_n) \leq x) | 
			& \leq \sup_x| \Pr(T^{(2)} \leq x ) - \Pr( T^{\oracle} \leq x ) |\\ 
			& \hspace*{0.4cm} + \sup_x|\Pr( T^{\oracle} \leq x ) - \Pr(\chi^2(r, \nu_n) \leq x) |\\
			& \to 0
		\end{align*}
		as $n \to \infty$, completing the proof.
		
	\end{proof}
	
	\subsection{Proof of the supporting lemma}
	\begin{proof}[Proof of Lemma \ref{lemma:intermediate bounds for oracle}]
		Lemma 5.1. of \cite{Shi2019} establishes \eqref{eqn:intermediate lemma 1} - \eqref{eqn:intermediate lemma 4}.
		To prove \eqref{eqn:intermediate lemma 5}, we see that
		$
		\norm{ \bm \Psi^{-1/2} \begin{bsmallmatrix} \Cmat & \bm 0 \end{bsmallmatrix} }_2 
		\leq \norm{ \bm \Psi^{-1/2} \begin{bsmallmatrix} \Cmat & \bm 0 \end{bsmallmatrix} \bm K_n^{-1/2} }_2 \norm{ \bm K_n^{1/2} }_2
		$.
		We know $ \norm{ \bm K_n^{1/2} }_2 = \lambda_{\max}^{1/2}\{ \bm K_n \} = O(1)$ from \eqref{eqn:intermediate lemma 1}.
		In addition, we find
		$$
		\norm{ \bm \Psi^{-1/2} \begin{bsmallmatrix} \Cmat & \bm 0 \end{bsmallmatrix} \bm K_n^{-1/2} }_2
		= \lambda_{\max}^{1/2} \bigg\{  \bm \Psi^{-1/2} \begin{bsmallmatrix} \Cmat & \bm 0 \end{bsmallmatrix} \bm K_n^{-1} \begin{bsmallmatrix} \Cmat' \\ \bm 0' \end{bsmallmatrix} \bm \Psi^{-1/2} \bigg\}
		= \lambda_{\max}\{ \mathbf{I}_r \} 
		= 1.
		$$
		Combining these results yields \eqref{eqn:intermediate lemma 5}.
		
		We now move on to proving \eqref{eqn:intermediate lemma 6}. 
		We will first show that
		\begin{equation}
			\norm{ \bm K_n - \Kna }_2 = O_p\left( \frac{s+m}{\sqrt{n}} \right). \label{eqn:Kna diff bound}
		\end{equation}
		Because $\bm K_n$ and $\Kna$ are symmetric, it is sufficient to prove
		$ 
		\norm{ \bm K_n - \Kna }_{\infty} = O_p\left( \frac{s+m}{\sqrt{n}} \right)
		$
		per Lemma S.8 of \cite{Shi2019}.
		Applying the Cauchy-Schwarz inequality, we find
		\begin{align*}
			\norm{ \bm K_n - \Kna }_{\infty}
			& = \max_{j \in \Mset \cup \Sset} \norm{\frac{1}{n} \xmat_j' \left[ \bm \Sigma(\xmat \bvec^*) - \bm \Sigma(\xmat \hat{\bvec}_a^{\oracle}) \right] \xmat_{\Mset \cup \Sset}}_1\\
			& = \sqrt{s+m} \max_{j \in \Mset \cup \Sset} \norm{\frac{1}{n} \xmat_j' \left[ \bm \Sigma(\xmat \bvec^*) - \bm \Sigma(\xmat \hat{\bvec}_a^{\oracle}) \right] \xmat_{\Mset \cup \Sset}}_2.
		\end{align*}
		
		Let $j \in \Mset \cup \Sset$. By the Fundamental Theorem of Calculus, we have
		\begin{align*}
			\frac{1}{n} & \xmat_j' \left[ \bm \Sigma(\xmat \bvec^*) - \bm \Sigma(\xmat \hat{\bvec}_a^{\oracle}) \right] \xmat_{\Mset \cup \Sset} \\
			& = (\bvec^* - \hat \bvec_a^{\oracle} )'\int_0^1 \frac{1}{n} \xmat' \diag \{ \xmat_j \circ b'''(t\bvec^* + (1-t)\hat{\bvec}_a^{\oracle} )  \} \xmat_{\Mset \cup \Sset} \,dt\\
			& = (\bvec^*_{\Mset \cup \Sset} - \hat \bvec_{a, \Mset \cup \Sset}^{\oracle} )'\int_0^1 \frac{1}{n} \xmat_{\Mset \cup \Sset}' \diag \{ \xmat_j \circ b'''(t\bvec^* + (1-t)\hat{\bvec}_a^{\oracle} )  \} \xmat_{\Mset \cup \Sset} \,dt
		\end{align*}
		where the integral is applied componentwise.
		This implies
		\begin{align}
			\bigg \lVert \frac{1}{n} \xmat_j' \big[ \bm \Sigma&(\xmat \bvec^*) - \bm \Sigma(\xmat \hat{\bvec}_a^{\oracle}) \big] \xmat_{\Mset \cup \Sset} \bigg \rVert_2 \nonumber \\
			& \leq \norm{\bvec^*_{\Mset \cup \Sset} - \hat \bvec_{a, \Mset \cup \Sset}^{\oracle}}_2 \nonumber \\
			& \hspace*{0.25cm} \times \sup_{t \in [0,1] }\norm{\frac{1}{n} \xmat_{\Mset \cup \Sset}' \diag \{ \xmat_j \circ b'''(t\bvec^* + (1-t)\hat{\bvec}_a^{\oracle} )  \} \xmat_{\Mset \cup \Sset}}_2. \label{eqn:int lemma 6.5}
		\end{align}
		Lemma \ref{lemma:oracle asymptotics} implies that $\Pr(\hat{\bvec}_a^{\oracle} \in \mathcal{N}_0) \to 1$.
		Clearly if $\hat{\bvec}_a^{\oracle} \in \mathcal{N}_0$, then for all $t \in [0,1]$, $t\bvec^* + (1-t)\hat{\bvec}_a^{\oracle} \in \mathcal{N}_0$.
		Therefore by \ref{assumption:glm:third derivative bound}, we have
		\begin{align*}
			\sup_{t \in [0,1] }\bigg\lVert\frac{1}{n} \xmat_{\Mset \cup \Sset}' & \diag \{ \xmat_j \circ b'''(t\bvec^* + (1-t)\hat{\bvec}_a^{\oracle} )  \} \xmat_{\Mset \cup \Sset} \bigg\rVert_2 \\
			& \leq \sup_{t \in [0,1] }\norm{\frac{1}{n} \xmat_{\Mset \cup \Sset}' \diag \{ |\xmat_j| \circ |b'''(t\bvec^* + (1-t)\hat{\bvec}_a^{\oracle} ) | \} \xmat_{\Mset \cup \Sset}}_2\\
			& = O_p(1).
		\end{align*}
		Applying this finding and \eqref{eqn:full oracle consistency} from Lemma \ref{lemma:oracle asymptotics} to \eqref{eqn:int lemma 6.5} yields
		$$
		\bigg \lVert \frac{1}{n} \xmat_j' \big[ \bm \Sigma(\xmat \bvec^*) - \bm \Sigma(\xmat \hat{\bvec}_a^{\oracle}) \big] \xmat_{\Mset \cup \Sset} \bigg \rVert_2 = O_p\left(\sqrt{\frac{s+m}{n}}\right)
		$$
		for all $j \in \Mset \cup \Sset$ and, by extension,
		$\norm{ \bm K_n - \Kna }_{\infty} = O_p\left( \frac{s+m}{\sqrt{n}}\right) $, completing our proof of \eqref{eqn:Kna diff bound}.
		
		Having proven \eqref{eqn:Kna diff bound}, we turn to \eqref{eqn:intermediate lemma 6} itself.
		We find that
		\begin{align}
			\norm{ \bm \Psi^{1/2} \left( \begin{bsmallmatrix} \Cmat & \bm 0 \end{bsmallmatrix} \Kna^{-1} \begin{bsmallmatrix} \Cmat' \\ \bm 0 \end{bsmallmatrix} \right)^{-1} \bm \Psi^{1/2} - \mathbf{I}_r }_2
			& = \norm{ \left( \bm \Psi^{-1/2}\begin{bsmallmatrix} \Cmat & \bm 0 \end{bsmallmatrix} \Kna^{-1} \begin{bsmallmatrix} \Cmat' \\ \bm 0 \end{bsmallmatrix} \bm \Psi^{-1/2}\right)^{-1} - \mathbf{I}_r }_2 \nonumber \\
			& \leq \norm{ \left( \bm \Psi^{-1/2}\begin{bsmallmatrix} \Cmat & \bm 0 \end{bsmallmatrix} \Kna^{-1} \begin{bsmallmatrix} \Cmat' \\ \bm 0 \end{bsmallmatrix} \bm \Psi^{-1/2}\right)^{-1} }_2 \nonumber \\
			& \hspace*{0.25cm} \times \norm{  \bm \Psi^{-1/2}\begin{bsmallmatrix} \Cmat & \bm 0 \end{bsmallmatrix} \Kna^{-1} \begin{bsmallmatrix} \Cmat' \\ \bm 0 \end{bsmallmatrix} \bm \Psi^{-1/2} - \mathbf{I}_r }_2 \label{eqn:int lemma 6.1}
		\end{align}
		Taking the second term of this expression, we find
		\begin{align*}
			\norm{  \bm \Psi^{-1/2}\begin{bsmallmatrix} \Cmat & \bm 0 \end{bsmallmatrix} \Kna^{-1} \begin{bsmallmatrix} \Cmat' \\ \bm 0 \end{bsmallmatrix} \bm \Psi^{-1/2} - \mathbf{I}_r }_2
			& = \norm{  \bm \Psi^{-1/2} \left( \begin{bsmallmatrix} \Cmat & \bm 0 \end{bsmallmatrix} \Kna^{-1} \begin{bsmallmatrix} \Cmat' \\ \bm 0 \end{bsmallmatrix}  - \bm \Psi \right) \bm \Psi^{-1/2}}_2 \\
			& = \norm{  \bm \Psi^{-1/2} \begin{bsmallmatrix} \Cmat & \bm 0 \end{bsmallmatrix} \left( \Kna^{-1}   - \bm K_n^{-1} \right)\begin{bsmallmatrix} \Cmat' \\ \bm 0 \end{bsmallmatrix} \bm \Psi^{-1/2}}_2\\
			& \leq \norm{ \bm \Psi ^{-1/2} \begin{bsmallmatrix} \Cmat & \bm 0 \end{bsmallmatrix} }_2^2 \norm{ \Kna^{-1}   - \bm K_n^{-1} }_2 \\
			& = O\left( \norm{ \Kna^{-1}   - \bm K_n^{-1} }_2 \right)
		\end{align*}
		by \eqref{eqn:intermediate lemma 5}.
		From here, we see that
		\begin{align}
			\norm{ \Kna^{-1}   - \bm K_n^{-1} }_2
			& = \norm{\Kna^{-1} (\bm K_n - \Kna )  \bm K_n^{-1} }_2 \nonumber \\
			& \leq \norm{ \Kna^{-1} }_2 \norm{ \bm K_n - \Kna  }_2 \norm{\bm K_n^{-1}}_2 . \label{eqn:int lemma 6.2}
		\end{align}
		
		We will bound each of the terms in \eqref{eqn:int lemma 6.2} in turn.
		We know from \ref{assumption:glm:min hessian} that
		$
		\norm{\bm K_n^{-1}}_2 = \lambda_{\max}\{ \bm K_n^{-1} \} = \lambda_{\min}^{-1} \{ \bm K_n\} = O(1)
		$.
		We find
		\begin{align}
			\lambda_{\min} \{ \Kna \}
			& = \min_{ \norm{\bm v}_2 = 1 } \bm v' \Kna \bm v \nonumber \\
			& = \min_{ \norm{\bm v}_2 = 1 } \bm v'(\bm K_n - \bm K_n + \Kna  )\bm v \nonumber \\
			& \geq \min_{ \norm{\bm v}_2 = 1 } \bm v' \bm K_n \bm v - \sup_{ \norm{\bm v}_2 = 1 } | \bm v'(\bm K_n - \Kna  )\bm v | \nonumber \\
			& = \lambda_{\min} \{ \bm K_n \} - \norm{ \bm K_n - \Kna }_2 \label{eqn:int lemma 6.3}
		\end{align}
		where the final equality follows from Lemma S.5 of \cite{Shi2019}.
		Condition \ref{assumption:glm:min hessian} provides that $\lambda_{\min}\{ \bm K_n \} >c$ for all $n$.
		Since $s + m = o(n^{1/3})$, \eqref{eqn:Kna diff bound} implies that $\norm{\bm K_n - \Kna}_2 = o_p(1) $.
		Together with \eqref{eqn:int lemma 6.3}, these results imply that for sufficiently large $n$,
		$ \lambda_{\min}\{ \Kna \} > c/2$ and therefore $\lambda_{\max}\{ \Kna^{-1} \} < 2/c$ with probability converging to $1$.
		Thus we have $\lambda_{\max}\{ \Kna^{-1} \} = O_p(1)$.
		Applying these findings and \eqref{eqn:Kna diff bound} to \eqref{eqn:int lemma 6.2}, we see that
		\begin{equation}
			\norm{ \Kna^{-1} - \bm K_n^{-1} }_2 = O_p\left( \frac{s+m}{\sqrt{n}} \right) \label{eqn:Kna inv diff bound}
		\end{equation}
		and, by extension,
		\begin{equation}
			\norm{  \bm \Psi^{-1/2}\begin{bsmallmatrix} \Cmat & \bm 0 \end{bsmallmatrix} \Kna^{-1} \begin{bsmallmatrix} \Cmat' \\ \bm 0 \end{bsmallmatrix} \bm \Psi^{-1/2} - \mathbf{I}_r }_2  = O_p\left( \frac{s+m}{\sqrt{n}} \right), \label{eqn:int lemma 6.1a}
		\end{equation}
		giving us a bound for the second term in \eqref{eqn:int lemma 6.1}.
		
		We now turn to the first term in \eqref{eqn:int lemma 6.1}.
		Using the same approach we used to derive \eqref{eqn:int lemma 6.3}, one can show
		\begin{align*}
			\lambda_{\min} \{\bm \Psi^{-1/2}\begin{bsmallmatrix} \Cmat & \bm 0 \end{bsmallmatrix} \Kna^{-1} \begin{bsmallmatrix} \Cmat' \\ \bm 0 \end{bsmallmatrix} \bm \Psi^{-1/2} \}	
			& \geq \lambda_{\min} \{\bm \Psi^{-1/2}\begin{bsmallmatrix} \Cmat & \bm 0 \end{bsmallmatrix} \bm K_n^{-1} \begin{bsmallmatrix} \Cmat' \\ \bm 0 \end{bsmallmatrix} \bm \Psi^{-1/2} \}\\
			& \hspace*{0.25cm} - \norm{ \bm \Psi^{-1/2}\begin{bsmallmatrix} \Cmat & \bm 0 \end{bsmallmatrix} ( \bm K_n^{-1} - \Kna^{-1} ) \begin{bsmallmatrix} \Cmat' \\ \bm 0 \end{bsmallmatrix} \bm \Psi^{-1/2} }_2 \\
			& \geq \lambda_{\min}\{ \mathbf{I}_r \} - \norm{\bm \Psi^{-1/2}\begin{bsmallmatrix} \Cmat & \bm 0 \end{bsmallmatrix}}_2^2 \norm{\bm K_n^{-1} - \Kna^{-1}}_2.
		\end{align*}
		When $s+m = o(n^{1/3})$, \eqref{eqn:intermediate lemma 5} and \eqref{eqn:Kna inv diff bound} provide that 
		$ \norm{\bm \Psi^{-1/2}\begin{bsmallmatrix} \Cmat & \bm 0 \end{bsmallmatrix}}_2^2 \norm{\bm K_n^{-1} - \Kna^{-1}}_2 =  O_p\left( \frac{s+m}{\sqrt{n} }\right) = o_p(1) $. 
		This implies
		$ \liminf_n  \lambda_{\min} \{\bm \Psi^{-1/2}\begin{bsmallmatrix} \Cmat & \bm 0 \end{bsmallmatrix} \Kna^{-1} \begin{bsmallmatrix} \Cmat' \\ \bm 0 \end{bsmallmatrix} \bm \Psi^{-1/2} \} > 0$
		with probability converging to $1$ and, by extension,
		\begin{align}
			\norm{ \left( \bm \Psi^{-1/2}\begin{bsmallmatrix} \Cmat & \bm 0 \end{bsmallmatrix} \Kna^{-1} \begin{bsmallmatrix} \Cmat' \\ \bm 0 \end{bsmallmatrix} \bm \Psi^{-1/2}\right)^{-1} }_2 
			& = \lambda_{\max}\left\{  \left( \bm \Psi^{-1/2}\begin{bsmallmatrix} \Cmat & \bm 0 \end{bsmallmatrix} \Kna^{-1} \begin{bsmallmatrix} \Cmat' \\ \bm 0 \end{bsmallmatrix} \bm \Psi^{-1/2}\right)^{-1} \right\} \nonumber \\
			& = \lambda_{\min}^{-1} \{\bm \Psi^{-1/2}\begin{bsmallmatrix} \Cmat & \bm 0 \end{bsmallmatrix} \Kna^{-1} \begin{bsmallmatrix} \Cmat' \\ \bm 0 \end{bsmallmatrix} \bm \Psi^{-1/2} \} \nonumber \\
			& = O_p(1) . \label{eqn:int lemma 6.1b}
		\end{align}
		Applying the bounds in \eqref{eqn:int lemma 6.1a} and \eqref{eqn:int lemma 6.1b} to the right hand side of \eqref{eqn:int lemma 6.1}, we arrive at \eqref{eqn:intermediate lemma 6}.
		
		We now turn to proving \eqref{eqn:intermediate lemma 7}.
		We see that
		\begin{align*}
			\norm{ \bm K_n^{1/2} \Kno^{-1} \bm K_n^{1/2} - \mathbf{I}_{m+s} }_2 
			& = \norm{ \bm K_n^{1/2} \left( \Kno^{-1} - \bm K_n^{-1} \right) \bm K_n^{1/2} }_2 \\
			& \leq \norm{ \bm K_n^{1/2} }_2^2 \norm{\Kno^{-1} - \bm K_n^{-1}}_2\\
			& = O\left( \norm{\Kno^{-1} - \bm K_n^{-1}}_2 \right)
		\end{align*}
		by \eqref{eqn:intermediate lemma 1}.
		By the same argument we used to prove \eqref{eqn:Kna inv diff bound}, we can show $\norm{\Kno^{-1} - \bm K_n^{-1}}_2 = O_p\left(\frac{s+m}{\sqrt n}\right)$, completing the proof of \eqref{eqn:intermediate lemma 7}.
	\end{proof}

	\section{Discussion of Technical Conditions} \label{sec:technical conditions}
	In this appendix we verify that \ref{assumption:glm:self concordant} holds in linear, logistic, and Poisson regression models.
	Recall that the canonical density of an exponential family takes the form
	\begin{equation*}
		p(y|\theta, \phi) = \exp \left(\frac{y \theta - b(\theta)}{\phi}\right)c(y)\text{,} 
	\end{equation*}
	where $\theta$ is the canonical parameter. We verify that \ref{assumption:glm:self concordant} holds below:
	\begin{itemize}
		\item Linear regression: Here $b(\theta) = \theta^2/2$, so $b''(\theta) = 1$ and $b'''(\theta) = 0$ and \ref{assumption:glm:self concordant} clearly holds.
		\item Logistic regression: Here $b(\theta) = \log(e^\theta + 1)$. One can show that $b''(\theta) = (e^\theta + 2 + e^{-\theta})^{-1}$ and that $b'''(\theta) = b''(\theta)(1 - 2(e^{-\theta} +1)^{-1})$. As such, we find
		$ |b'''(\theta)| = b''(\theta)|(1 - 2(e^{-\theta} +1)^{-1})| \leq b''(\theta) $ and \ref{assumption:glm:self concordant} holds.
		\item Poisson regression: Here $b(\theta) = e^\theta$, so $b'''(\theta) = b''(\theta) = e^\theta$ and \ref{assumption:glm:self concordant} clearly holds.
	\end{itemize}

\end{document}